\DeclareMathAlphabet\mathbfcal{OMS}{cmsy}{b}{n}
\newtheorem{theorem}{Theorem}[section]
\newtheorem{lemma}[theorem]{Lemma}
\newtheorem{proposition}[theorem]{Proposition}
\newtheorem{corollary}[theorem]{Corollary}
\theoremstyle{definition}
\newtheorem{definition}[theorem]{Definition}
\newtheorem{assumption}[theorem]{Assumption}
\newtheorem{example}[theorem]{Example}
\theoremstyle{remark}
\newtheorem{remark}[theorem]{Remark}
\numberwithin{equation}{section}
\newcommand{\I}{I_{\rm min}}
\newcommand\R{\mathbb{R}}
\newcommand\N{\mathbb{N}}
\renewcommand\P{\mathbb{P}}
\newcommand{\vertiii}[1]{{\left\vert\kern-0.25ex\left\vert\kern-0.25ex\left\vert #1 
    \right\vert\kern-0.25ex\right\vert\kern-0.25ex\right\vert}}
\begin{document}
	

\title[]{Weakly maxitive set functions and their \\ possibility distributions}
	
    \author{Michael Kupper}
	\address{Department of Mathematics and Statistics, University of Konstanz}
	\email{kupper@uni-konstanz.de}
	
		\author{Jos\'e M.~Zapata}
	\address{Departamento de estadística e investigación operativa, Universidad de Murcia}
	\email{jmzg1@um.es}
	
	\date{\today}
	
	\begin{abstract}
	The Shilkret integral with respect to a completely maxitive capacity is fully determined by a possibility distribution. 
	In this paper, we introduce a weaker topological form of maxitivity and show that under this assumption the Shilkret integral is still determined by its possibility distribution for functions that are sufficiently regular.  Motivated by large deviations theory, we provide a Laplace principle for maxitive integrals and characterize the possibility distribution under certain separation and convexity assumptions.  Moreover, we show a maxitive integral representation result for weakly maxitive non-linear expectations. The theoretical results are illustrated by providing large deviations bounds for sequences of capacities, and by deriving a monotone analogue of Cram\'{e}r's theorem.   
	
		\smallskip
		\noindent \emph{Key words:} Shilkret integral, capacity, possibility distribution, weak maxitivity, large deviation principle, Laplace principle
		
		\smallskip
		\noindent \emph{AMS 2020 Subject Classification: Primary 28E10, 60A86; 
			Secondary 28A25, 28C10, 60F10} 
	\end{abstract}

		\thanks{We thank two anonymous referees for their helpful suggestions and remarks.}
	
	\maketitle
	
	\setcounter{tocdepth}{1}
	
\section{Introduction}
Non-additive set functions,  fuzzy measures and capacities play an important role in the theory of decision making under risk and uncertainty.
They appear in possibility theory, idempotent/tropical mathematics and related fields, where maxitive measures  and their maxitive integrals are studied; see, e.g.,~\cite{dubois2006,dubois2015possibility,maslov,shilkret1971maxitive,zadeh1978fuzzy}. 
In this article we focus on the \emph{Shilkret integral} introduced in~\cite{shilkret1971maxitive}. 
An extension of this integral to functions taking negative values, and its complete characterization  was given in~\cite{cattaneo2014maxitive,cattaneo2016maxitive}. On the one hand, if $\Pi$ is a possibility measure \cite{zadeh1978fuzzy}, then the Shilkret integral  is given by
\[
\int^S f\, {\rm d}\Pi=\sup_{x} f(x) \pi(x),
\]
where $\pi$ denotes the corresponding possibility distribution. In this case, both the set function $\Pi$ and the Shilkret integral $\int^S f\, {\rm d}\Pi$ are completely maxitive.
On the other hand, for the particular set function $\bar{\Pi}_A:=\limsup_{n\to\infty} \mathbb{P}(X_n\in A)^{1/n}$, where $(X_n)_{n\in\N}$ is a sequence of random variables,
the Laplace principle from the theory of large deviations ensures under reasonable assumptions that 
\[\int^S f\, {\rm d}\bar\Pi=\sup_{x} f(x) e^{-I(x)},\] where $I$ denotes the 
rate function; cf.~Puhalskii~\cite{puhalskii2001large}. Although the two representations for the Shilkret integral are very similar, in general, the second one is only valid for certain continuous functions. Thus, the set function $\bar\Pi$ satisfies only a weaker form of maxitivity, and consequently is not a possibility measure. Nevertheless, $\bar\pi(x)=e^{-I(x)}$ can still be viewed as a corresponding possibility distribution. In this article, we characterize the class of set functions which admit a possibility distribution in a weaker topological sense by means of different notions of maxitivity. 

In Section~\ref{sec:possdistribution}, we introduce some basics on possibility distributions and motivate our results.
Section~\ref{sec:maxInt} includes the key bounds on maxitive integrals and their connection to weakly maxitive set functions.
In Section~\ref{sec:integralRep}, we focus on a maxitive integral representation result for weakly maxitive non-linear expectations.
The results are then applied in Section \ref{sec:monLDP}, where we show that the basic results of large deviations theory are valid for general maxitive integrals; in particular, the equivalence 
between the monotone large deviation principle and the monotone Laplace principle. In Section \ref{sec:convRateFunc}, we provide conditions ensuring that the rate function is convex and study the corresponding representation.  
Finally, in Section \ref{sec:examples},  the theoretical results are illustrated with two examples. 
On the one hand, we study the asymptotic behaviour of a sequence of capacities by providing some large deviations bounds, on the other hand we establish a monotone analogue of Cram\'{e}r's theorem for the sample mean of i.i.d.~sequences.  The paper concludes with an appendix, where we provide some separation results in preordered topological groups.

\section{Background and motivation}\label{sec:possdistribution}
Let $E$ be a non-empty set and $\mathcal{A}$ be a collection of subsets of $E$ such that $\emptyset,E\in\mathcal{A}$. A set function $\Pi\colon \mathcal{A}\to \R$ is called a  \emph{capacity}, if $\Pi_\emptyset=0$, $\Pi_E=1$, and $\Pi_A\le\Pi_B$ whenever $A\subset B$.\footnote{Typically $\mathcal{A}$ is endowed with some algebraic structure. Here, for the sake of generality we do not assume any structrure on $\mathcal{A}$.}
Among the most important examples of capacities, we find probability measures in standard probability theory~\cite{kolmogorov}, possibility measures in possibility  
theory~\cite{dubois2006}, and upper/lower probabilities in the theory of imprecise probabilities~\cite{walley1991statistical}. A capacity $\Pi$ is called a \emph{possibility measure} \cite{zadeh1978fuzzy},  if there exists a \emph{possibility distribution}  $\pi\colon E\to [0,1]$ such that 
\begin{equation}\label{possdistribution}
	\Pi_A=\sup_{x\in A} \pi(x)\quad\mbox{ for all }A\in \mathcal{A}.
\end{equation}
A capacity $\Pi$ which admits a possibility distribution is automatically completely maxitive, i.e., 
\begin{equation}
	\label{eq:possibilityMeasure}
	\Pi_{A}\le \vee_{i\in {\mathcal I}}\Pi_{A_i}\quad\mbox{ whenever }A\subset \cup_{i\in {\mathcal I}} A_i,
\end{equation}
for all $A\in\mathcal{A}$ and every family $(A_i)_{i\in {\mathcal I}}\subset \mathcal{A}$. Conversely, if $\mathcal{A}$ is closed under arbitrary unions and complements, then any supremum preserving capacity $\Pi$ admits a 
possibility distribution which is uniquely determined by $\pi(x)=\Pi_{\cap\{A\in\mathcal{A}\colon x\in A\}}$ for all $x\in E$; cf.~\cite{cooman1999}.
Under additional continuity on the capacity and the assumption that $\mathcal{A}$ is rich enough,\footnote{E.g.~the collection $\mathcal{A}$ forms a $\sigma$-algebra on a separable metric space which contains all Borel sets.} the existence of a possibility distribution is guaranteed if $\Pi$ is only finitely maxitive, i.e., $\Pi_A\le \vee_{i=1}^n \Pi_{B_i}$ for all $A,B_1,\dots,B_n\in\mathcal{A}$, $n\in\N$,  with $A\subset \cup_{i=1}^n B_i$; cf.~\cite{arslanov2004existence,murofushi1993}.

The \emph{Shilkret integral} of a function $f\colon E\to [0,\infty]$ with respect to a capacity $\Pi$ is defined by
\begin{equation}
	\label{eq:Shilkret}
	\int^S f \,{\rm d}\Pi:=\underset{c\in (0,\infty)}\sup c \Pi_{\{f > c\}}.
\end{equation} 
Originally, the Shilkret integral was introduced for supremum preserving capacities, also called maxitive  probabilities or idempotent probabilities \cite{shilkret1971maxitive}.  
However, this definition is valid for general capacities (not necessarily maxitive) provided that $f$ is  $\mathcal{A}$-measurable, i.e., $\{f> c\}\in\mathcal{A}$ for all $c\in \mathbb{R}$. 
In case that $\Pi$ admits a possibility distribution $\pi$, then the Shilkret integral takes the form
\begin{equation}
	\label{eq:possibilityIntegral}
	\int^S f\, {\rm d}\Pi=\sup_{x\in E} f(x) \pi(x)
\end{equation}
for all $\mathcal{A}$-measurable functions $f\colon E\to [0,\infty]$. 

In this article, we analyze to what extent a capacity is still determined by 
a possibility distribution if the representation  \eqref{eq:possibilityIntegral} only holds for a certain class of $\mathcal{A}$-measurable functions, e.g., all continuous, continuous bounded, or continuous increasing functions.\footnote{Here, we  assume that $E$ is a topological preordered space.} 
We will see that this exactly holds  when the capacity satisfies a weaker form of maxitivity.\footnote{I.e.~$\Pi_A\le \vee_{i=1}^n \Pi_{B_i}$ 
for (upwards closed) closed sets $A$ and (upwards closed) open sets $B_1,\dots,B_n$, $n\in\N$, with $A\subset \cup_{i=1}^n B_i$. 
We emphasize that no additional continuity on $\Pi$ is required.} To do so, we will provide bounds for the Shilkret integral with respect to general capacities.  
For a capacity $\Pi$ and a function $\pi\colon E\to [0,1]$, a key result of this work states that 
\begin{equation}\label{eq:lowerShilkret}
	\int^S f\,{\rm d} \Pi\ge \sup_{x\in E} f(x)\pi(x)\quad\mbox{if and only if}\quad \Pi_O\ge \sup_{x\in O} \pi(x),
\end{equation}
where the first inequality holds for all (increasing) lower semicontinuous functions $f$, and the second inequality holds for all (upwards closed) open sets $O$. Likewise,  
\begin{equation}\label{eq:upperShilkret}
	\int^S f\,{\rm d} \Pi\le \sup_{x\in E} f(x)\pi(x)\quad\mbox{if and only if}\quad \Pi_C\le \sup_{x\in C} \pi(x),
\end{equation}
where the first inequality holds for all (increasing) upper semicontinuous functions $f$, and the second inequality holds for all (upwards closed) closed sets $C$. 
Instead of assuming the rather restrictive assumption \eqref{possdistribution}, the relation between the capacity and its possibility distribution is relaxed to 
inequalities which are required only for certain nice topological sets. As a consequence, if the capacity $\Pi$ satisfies both the upper and lower bound, then the Shilkret integral has the representation \eqref{eq:possibilityIntegral} for all (increasing) continuous functions. 

The Shilkret integral is only defined for non-negative functions. 
In order to deal with real-valued functions, we present and prove our results in terms of the maxitive integral introduced by Cattaneo~\cite{cattaneo2016maxitive}, which is obtained as a transformation of the Shilkret integral. We say that a set function $J\colon \mathcal{A}\to [-\infty,0]$ is a \emph{concentration} if $J_E=0$, $J_{\emptyset}=-\infty$, and $J_A\le J_B$ whenever $A\subset B$. In other words,  $J$ is a concentration if and only if $e^J$ is a capacity.   
The maxitive integral of an $\mathcal{A}$-measurable function $f\colon E\to[-\infty,\infty)$ with respect to the concentration $J$ is defined as
\begin{equation}\label{def:convint}
	\phi_J(f):=\log \int^S e^f \,{\rm d} e^J=\underset{c\in \R}\sup\{c+ J_{\{f>c\}}\}.
\end{equation}
As discussed in \cite{cattaneo2016maxitive}, the functional $\phi_J$ shares the properties of a monetary risk measure~\cite{follmer2016stochastic}, 
and satisfies in particular the \emph{translation property} $\phi_J(f+c)=\phi_J(f)+c$ for all $c\in\mathbb{R}$.
In contrast, the Shilkret integral~\eqref{eq:Shilkret} fails the translation property unless the capacity $\Pi$ only assumes the values $0$ and $1$; cf.~\cite{de2001integration}. 
In particular, aside from this degenerate case, the Shilkret integral is neither a coherent prevision, nor a monetary risk measure.  
By defining the rate function $I\colon E\to [0,\infty]$ by $I(x):=-\log\pi(x)$, the bound \eqref{eq:lowerShilkret} takes the form
\begin{equation}\label{bound1}
	\phi_J(f)\ge \sup_{x\in E}\{f(x)-I(x)\}\quad\mbox{if and only if}\quad J_O\ge -\inf_{x\in O} I(x),
\end{equation}
for all (increasing) lower semicontinuous functions $f$, and all (upwards closed) open sets $O$, and  
the bound \eqref{eq:upperShilkret} translates to 
\begin{equation}\label{bound2}
	\phi_J(f)\le \sup_{x\in E}\{f(x)-I(x)\}\quad\mbox{if and only if}\quad J_C\le -\inf_{x\in C} I(x),
\end{equation}
for all (increasing) upper semicontinuous functions $f$, and all (upwards closed) closed sets $C$.

    Similar type of bounds appear in the theory of large deviations, where the capacity has the special form ${\Pi}_A:=\limsup_{n\to\infty} \mathbb{P}(X_n\in A)^{1/n}$
	for a sequence $(X_n)_{n\in\N}$ of random variables with values in a completely regular topological space $E$. 
	In that case, the corresponding concentration is given by  $J_A=\limsup_{n\to\infty}\tfrac{1}{n}\log \mathbb{P}(X_n\in A)$ with respective maxitive integral  
	$\phi_J(f)=\limsup_{n\to\infty}\tfrac{1}{n}\log\mathbb{E}_{\mathbb{P}}[\exp(n f(X_n))]$ for all bounded continuous functions $f$ on $E$. Then the equivalences \eqref{bound1} and \eqref{bound2}  amount to the well-known equivalence between the large deviation principle (LDP) and the Laplace principle (LP); cf.~\cite{dembo,puhalskii2001large}. In this article, we will show that the key concepts of large deviations theory can be understood and extended to the framework of 
	weakly maxitive concentrations and their maxitive integrals; e.g., the equivalences \eqref{bound1} and \eqref{bound2} establish the equivalence between the LDP and the LP for general concentrations. This covers situations that are not captured by the standard setting of large deviations theory. 
	 For instance, in the theory of imprecise probability~\cite{walley1991statistical}, we may be interested in large deviations bounds for upper probabilities of the form $\overline{\mathbb{P}}(A)=\sup_{\mathbb{P}\in\mathcal{P}}\mathbb{P}(A)$ for a set $\mathcal{P}$ of probability measures.
	Then we consider the concentration $J_A=\limsup_{n\to\infty}\tfrac{1}{n}\log\overline{\mathbb{P}}(X_n\in A)$, for which we will show
	in Subsection~\ref{sec:acc} that its maxitive integral is given by
	\[
	\phi_J(f)=\limsup_{n\to\infty}\tfrac{1}{n}\log\int_0^\infty \overline{\mathbb{P}}\left(\exp(nf(X_n))> x\right) {\rm d}x.
	\]
	Standard large deviations theory provides conditions such that the usual upper bound $\limsup_{n\to\infty}\tfrac{1}{n}\log \mathbb{P}(X_n\in C)\le -\inf_{x\in C}I(x)$ is valid for all closed sets $C\subset E$, and the usual lower bound $\liminf_{n\to\infty}\tfrac{1}{n}\log \mathbb{P}(X_n\in O)\ge -\inf_{x\in O}I(x)$ holds for all open sets $O\subset E$. 
	However, one may be interested in finding bounds on certain smaller classes of sets.   
	One of the features of the presented framework is that, by considering a preorder relation $\le$ on $E$, we may restrict ourselves to the class of upwards closed sets 
	for which we characterize large deviation bounds.  As a result, we obtain a monotone version of Cram\'{e}r's theorem which provides new large deviations bounds for the sample mean of i.i.d.~sequences.

\section{Bounds for maxitive integral and weak maxitivity}\label{sec:maxInt}
In this section, we introduce the basic concepts and provide the key bounds for concentrations and the respective maxitive integrals. 
These bounds ensure a weak form of maxitivity which allows to connect concentrations and their maxitive integrals with rate functions.

\subsection{Setting and notation} Let $(E,\le)$ be a topological preordered space.\footnote{Recall that a preorder is a reflexive and transitive binary relation.  We do not assume any relations between
	the topology and the preorder.}  Let $\mathcal{U}$ be a base of the topology, and define $\mathcal{U}_x:=\{U\in\mathcal{U}\colon x\in U\}$ for all $x\in E$.
 Moreover, for $A\subset E$, we define the \emph{upset} and the \emph{downset} as
\[
{\uparrow}A:=\{y\in E\colon x\le y\mbox{ for some }x\in A\}\quad\mbox{ and }\quad{\downarrow}A:=\{y\in E\colon y\le x\mbox{ for some }x\in A\}.
\]  
We say that $A\subset E$ is \emph{upwards closed} if $A={\uparrow}A$, and \emph{downwards closed} if $A={\downarrow}A$.   
Let $\mathcal{O}^{\uparrow}$ denote the collection of all subsets $A\subset E$ which are open and upwards closed, and $\mathcal{C}^{\uparrow}$ be the collection of all  subsets $A\subset E$ which are closed and upwards closed. In addition, let $\mathcal{C}_c^\uparrow$ be the set of all $C\in \mathcal{C}^\uparrow$ which are compactly generated, i.e., $C={\uparrow}K$ for some compact $K\subset E$.
 Similarly, we define the corresponding collections $\mathcal{O}^{\downarrow}$ and $\mathcal{C}^{\downarrow}$ of downwards closed sets, which are open and closed, respectively. 

Throughout this section, we work under the following assumption.
\begin{assumption}\label{ass1}
For every $x,y\in E$ with $x\le y$, we assume that 
\begin{itemize}
	\item[(A)] for every $U_x\in\mathcal{U}_x$ there exists $U_y\in\mathcal{U}_y$ such that $U_y\subset {\uparrow}U_x$.
	\item[(B)] for every $U_y\in\mathcal{U}_y$ there exists $U_x\in\mathcal{U}_x$ such that $U_x\subset {\downarrow}U_y$.
\end{itemize}
\end{assumption}
In case that the preorder is trivial, i.e., $x\le y$ if and only if $x=y$, then  $\mathcal{O}^{\uparrow}$ and $\mathcal{C}^{\uparrow}$ coincide with the collections of all open and closed subsets of $E$, respectively, and the previous assumption is trivially satisfied. 

\begin{remark}\label{rem:preorderedGroup}
Let $E$ be a Hausdorff topological abelian group. Suppose that $E_+$ is a subset of $E$ such that $E_+ + E_+\subset E_+$ and $0\in E_+$. 
	Then the binary relation defined by 
	$$x\le y\quad\mbox{if and only if}\quad y-x\in E_+$$  
	is a translation invariant preorder on $E$. 
	Direct verification shows that $(E,\le)$ satisfies Assumption~\ref{ass1}.
\end{remark}

 On a functional level, the sets  $\mathcal{O}^{\uparrow}$ and $\mathcal{C}^{\uparrow}$ correspond to the following spaces.
We denote by ${L}^{\uparrow}$ the set of all increasing\footnote{A function $f\colon E\to [-\infty,\infty]$ is called \emph{increasing}  if $f(x)\le f(y)$  whenever $x\le y$.} lower semicontinuous functions $f\colon E\to[-\infty,\infty)$, by ${U}^{\uparrow}$ the set of all increasing upper semicontinuous functions $f\colon E\to[-\infty,\infty)$, and 
by $U_c^\uparrow$ the set of all $f\in U^\uparrow$ such that $\{f\ge c\}$ is compactly generated for all $c\in\R$. 
We first collect some basic topological properties.
\begin{lemma}\label{lem:basictop}
	 The following assertions hold.
	\begin{enumerate}
		\item $A\in \mathcal{O}^\uparrow$ if and only if $A^c\in \mathcal{C}^\downarrow$. 
		\item $A\in \mathcal{C}^\uparrow$ if and only if $A^c\in \mathcal{O}^\downarrow$. 
		\item If $f\in {L}^{\uparrow}$, then $\{f>c\}
		\in \mathcal{O}^{\uparrow}$  for all $c\in\R$.  
		\item 	If $f\in {U}^{\uparrow}$, then $\{f\ge c\}
		\in \mathcal{C}^{\uparrow}$ for all $c\in\R$.
		\item 	If $U\subset E$ is open, then ${\uparrow}U\in\mathcal{O}^{\uparrow}$. 
		\item 	If $A$ is upwards closed, then ${\rm cl}(A)$ is upwards closed.\footnote{As usual, ${\rm cl}(A)$ denotes the topological closure of a subset $A$.}
	\end{enumerate}
\end{lemma}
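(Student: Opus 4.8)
The plan is to split the six assertions into two groups. Items (i)--(iv) are purely formal, resting only on complementation together with the order behaviour of super- and sublevel sets, while items (v) and (vi) carry the real content and are exactly where Assumption~\ref{ass1} is used.

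For (i) and (ii) I would first record the elementary fact that a set $A$ is upwards closed if and only if its complement is downwards closed: the implication ``$x\in A$ and $x\le y$ force $y\in A$'' has contrapositive ``$y\in A^c$ and $x\le y$ force $x\in A^c$'', which is precisely $A^c={\downarrow}A^c$. Combining this with the standard equivalences ``$A$ is open iff $A^c$ is closed'' and ``$A$ is closed iff $A^c$ is open'' yields both statements at once. For (iii) and (iv) I would use that $f\in L^\uparrow$ is lower semicontinuous, so $\{f>c\}$ is open, and increasing, so $x\in\{f>c\}$ together with $x\le y$ gives $f(y)\ge f(x)>c$ and hence $\{f>c\}$ is upwards closed; the argument for (iv) is identical, with $\{f\ge c\}$ closed by upper semicontinuity.

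For (v) the upward-closedness of ${\uparrow}U$ is immediate from transitivity of $\le$ (applying ${\uparrow}(\cdot)$ a second time adds nothing), so the genuine content is openness, which I would establish pointwise. Given $y\in{\uparrow}U$, choose $x\in U$ with $x\le y$, shrink to a basic neighbourhood $U_x\in\mathcal{U}_x$ with $U_x\subset U$ (possible since $\mathcal{U}$ is a base and $U$ open), and then apply Assumption~\ref{ass1}(A) to obtain $U_y\in\mathcal{U}_y$ with $U_y\subset{\uparrow}U_x\subset{\uparrow}U$, exhibiting $y$ as an interior point. For (vi), assuming $A$ upwards closed, $y\in{\rm cl}(A)$ and $y\le z$, I would show $z\in{\rm cl}(A)$ by testing an arbitrary basic neighbourhood $U_z\in\mathcal{U}_z$: Assumption~\ref{ass1}(B) yields $U_y\in\mathcal{U}_y$ with $U_y\subset{\downarrow}U_z$, and since $y\in{\rm cl}(A)$ there is $a\in U_y\cap A$; then $a\in{\downarrow}U_z$ produces $w\in U_z$ with $a\le w$, so $w\in A$ by upward-closedness, whence $U_z\cap A\ne\emptyset$.

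The only real obstacle, beyond this bookkeeping, is handling the base-compatibility conditions correctly in (v) and (vi): Assumption~\ref{ass1} is phrased solely in terms of the base $\mathcal{U}$, so I must reduce each argument to basic neighbourhoods before invoking it, namely shrinking the given open set to a member of $\mathcal{U}$ in (v), and checking the closure criterion only against basic neighbourhoods (which suffices since $\mathcal{U}$ is a base) in (vi).
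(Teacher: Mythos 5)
Your proposal is correct and takes essentially the same approach as the paper: items (i)--(iv) via complementation and the monotone/semicontinuity behaviour of level sets (you argue directly or by contraposition where the paper argues by contradiction, an immaterial difference), and items (v) and (vi) by exactly the paper's argument, reducing to basic neighbourhoods and invoking Assumption~\ref{ass1}(A) and (B), respectively. No gaps.
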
 

\begin{proof}
	(i) Suppose that $A\in\mathcal{O}^\uparrow$. 
	We show that $A^c$ is downwards closed.  
	By contradiction, assume that $A^c\neq {\downarrow}(A^c)$, that is, there exists $x\in {\downarrow}(A^c)$ such that $x\notin A^c$ (or equivalently $x\in A$). 
	Since $x\in {\downarrow}(A^c)$, there exists $y\in A^c$ such that $x\le y$. 
	Since $x\in A$ and $x\le y$, it follows that $y\in {\uparrow}A$. 
	By assumption ${\uparrow}A=A$, so that $y\in A$. 
	But this is a contradiction to $y\in A^c$. 
	
	(ii) follows along the same line of argumentation as (i).
	
	(iii) Fix $f\in L^{\uparrow}$. 
	We prove that $\{f>c\}$ is upwards closed. 
	By contradiction, assume that $x\le y$ for $x\in \{f>c\}$ and $y\notin \{f>c\}$. 
	Since $x\in \{f>c\}$, it follows that $f(x)> c$. 
	Since $y\notin  \{f>c\}$, we have that $f(y)\le c$. 
	Hence $f(y)<f(x)$, in contradiction to $f(x)\le f(y)$.  
	
	(iv) follows by similar arguments as in (iii).
	
	(v) Let $y\in {\uparrow}U$, so that $x\le y$ for some  $x\in U$. Since $U$ is open, there exists $U_x\in\mathcal{U}_x$ such that $U_x\subset U$.
	By Assumption \ref{ass1}, there exists $U_y\in\mathcal{U}_y$ such that $U_y\subset{\uparrow}U_x\subset{\uparrow} U$. This shows that ${\uparrow}U$ is open.
	
	(vi) Suppose that $x\le y$ with $x\in {\rm cl}(A)$.
	We have to show that $y\in {\rm cl}(A)$. Fix $U_y\in\mathcal{U}_y$. By Assumption \ref{ass1} there exists $U_x\in\mathcal{U}_x$ such that 
	$U_x\subset {\downarrow}U_y$. Since $x$ is in the closure of $A$, there exists $\tilde{x}\in U_x\cap A$.
	Therefore, it follows that $\tilde{x}\in {\downarrow}U_y$, which shows that there exists 
	$\tilde{y}\in U_y$ with $\tilde{x}\le \tilde{y}$. Since $\tilde{y}\in {\uparrow} A=A$, we conclude that  $U_y\cap A\neq\emptyset$.
\end{proof} 

\subsection{Bounds for maxitive integrals} In accordance with Section~\ref{sec:possdistribution}, we next introduce the key concepts of this article. Let $J$ be a \emph{concentration} on $\mathcal{OC}^{\uparrow}:=\mathcal{O}^{\uparrow}\cup \mathcal{C}^{\uparrow}$, i.e., a set function $J\colon \mathcal{OC}^{\uparrow}\to [-\infty,0]$ which satisfies $J_\emptyset=-\infty$, $J_E=0$, and $J_A\le J_B$ whenever $A\subset B$. 
The respective \emph{maxitive integral}  $\phi_J$ on ${LU}^{\uparrow}:={L}^{\uparrow}\cup{U}^{\uparrow}$ is defined by
\begin{equation}\label{def:maxint}
\phi_J(f):=\begin{cases}
	\underset{c\in\R}\sup\{c + J_{\{f> c\}}\}, & \mbox{ if }f\in {L}^\uparrow,\\
	\underset{c\in\R}\sup\{c + J_{\{f\ge c\}}\}, & \mbox{ otherwise}.
\end{cases}
\end{equation}
As discussed in the previous section, $e^J$ is a capacity and $\phi_J$ is a transformed version of the Shilkret integral; cf.~Cattaneo~\cite{cattaneo2016maxitive}.
Notice that $\phi_J$ is well-defined due to Lemma~\ref{lem:basictop}. By considering the extended concentration $\bar J_A:=\inf\{J_B\colon B\in  \mathcal{OC}^{\uparrow},\,A\subset B \}$ for all $A\subset E$, it follows by direct verification that for all $f\in {LU}^{\uparrow}$,
\[
	\phi_J(f)=\underset{c\in\R}\sup\{c + \bar J_{\{f\ge c\}}\}=\underset{c\in\R}\sup\{c + \bar J_{\{f> c\}}\}.
\]
This shows that definition \eqref{def:maxint} is consistent with that in \eqref{def:convint}.\footnote{In particular, for $f\in   {L}^{\uparrow}\cap  {U}^{\uparrow}$, it follows that the two definitions in \eqref{def:maxint} coincide.} The functional  \eqref{def:maxint} shares the properties of a non-linear expectation, i.e., it is constant preserving $\phi_J(c)=c$ for all $c\in \R$, and monotone $\phi_J(f)\le\phi_J(g)$ whenever $f\le g$. Also, the maxitive integral has the translation property $\phi_J(f+c)=\phi_J(f)+c$ for all $c\in\R$. 

Next, we show that the concentration can be recovered from the maxitive integral by the evaluation at indicator functions. 
We always make the convention that $-\infty \cdot 0=0$, so that the indicator function $-\infty 1_{A^c}$ assumes the value $-\infty$ on $A^c$ and zero on $A$. 

\begin{proposition}\label{prop:concentration} Let $J$ be a concentration. Then, for every $A\in \mathcal{OC}^{\uparrow}$, 
\[
J_A=\phi_J(-\infty 1_{A^c})=\underset{r<0}\inf \phi_J(r 1_{A^c}).
\]
\end{proposition}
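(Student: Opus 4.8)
The plan is to evaluate $\phi_J$ directly on the two families of step functions and read off $J_A$. Write $f_r:=r1_{A^c}$ for $r\in[-\infty,0)$, so that $f_{-\infty}=-\infty1_{A^c}$ is the function of the first equality and the $f_r$ with $r<0$ finite are those of the second. For every such $r$, the function $f_r$ takes only the values $0$ (on $A$) and $r$ (on $A^c$), and since $A$ is upwards closed it is increasing. Its strict super-level sets are
\[
\{f_r>c\}=\begin{cases} \emptyset & \text{if } c\ge 0,\\ A & \text{if } r\le c<0,\\ E & \text{if } c<r, \end{cases}
\]
the last case being vacuous when $r=-\infty$; all of these lie in $\{\emptyset,A,E\}\subset\mathcal{OC}^{\uparrow}$. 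When $A\in\mathcal{O}^{\uparrow}$ they are open, so $f_r\in L^{\uparrow}$; when $A\in\mathcal{C}^{\uparrow}$ one checks in the same way that the sets $\{f_r\ge c\}$ are closed, so $f_r\in U^{\uparrow}$. In either case $f_r\in LU^{\uparrow}$ and $\phi_J(f_r)$ is defined.

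First I would use the unified representation $\phi_J(f)=\sup_{c}\{c+\bar J_{\{f>c\}}\}$ recorded after \eqref{def:maxint}, which holds for every $f\in LU^{\uparrow}$ and spares a case distinction between the two branches of \eqref{def:maxint}. Since $A,\emptyset,E\in\mathcal{OC}^{\uparrow}$, monotonicity of $J$ gives $\bar J_A=J_A$, $\bar J_\emptyset=-\infty$ and $\bar J_E=0$. Splitting the supremum according to the three regimes above, the part $c\ge 0$ contributes $c-\infty=-\infty$, the part $r\le c<0$ contributes $\sup_{r\le c<0}(c+J_A)=J_A$ (the value $0$ being approached but not attained), and the part $c<r$ contributes $\sup_{c<r}c=r$. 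Hence
\[
\phi_J(r1_{A^c})=\max\{J_A,r\}\quad\text{for } r\in(-\infty,0),
\]
with the convention $\max\{-\infty,r\}=r$, while for $r=-\infty$ the third regime is empty and $\phi_J(-\infty1_{A^c})=J_A$. This is the first equality.

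For the second equality it then remains to take the infimum of $\max\{J_A,r\}$ over $r<0$. The ``$\ge$'' direction is in fact immediate from monotonicity: since $-\infty1_{A^c}\le r1_{A^c}$ pointwise, $\phi_J(r1_{A^c})\ge\phi_J(-\infty1_{A^c})=J_A$ for every $r$, whence $\inf_{r<0}\phi_J(r1_{A^c})\ge J_A$. For ``$\le$'', note that $\max\{J_A,r\}$ is non-decreasing in $r$ and equals $J_A$ once $r\le J_A$, so $\inf_{r<0}\phi_J(r1_{A^c})=\lim_{r\to-\infty}\max\{J_A,r\}=J_A$, which closes the argument.

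The computation is essentially routine; the only points demanding care are (i) deciding, from whether $A$ is open or closed, which branch of \eqref{def:maxint} governs $f_r$ and hence that $f_r\in LU^{\uparrow}$, and (ii) the boundary bookkeeping of the suprema together with the degenerate case $J_A=-\infty$, where the arithmetic $c+J_A=-\infty$ and the convention $-\infty\cdot 0=0$ must be tracked consistently. Passing to the $\bar J$-representation is what removes the main nuisance, since it lets the regimes $\{f>c\}$ and $\{f\ge c\}$ be handled simultaneously and uses only $\bar J_A=J_A$ for $A\in\mathcal{OC}^{\uparrow}$.
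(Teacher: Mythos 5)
Your proof is correct and takes essentially the same approach as the paper's: evaluate $\phi_J$ on the step functions $r1_{A^c}$ by splitting the supremum over $c$ into the regimes $c\ge 0$, $r\le c<0$, and $c<r$, obtaining $r\vee J_A$, and then let $r\to-\infty$. The only cosmetic difference is that you route through the unified $\bar J$-representation recorded after \eqref{def:maxint} so as to treat the cases $A\in\mathcal{O}^{\uparrow}$ and $A\in\mathcal{C}^{\uparrow}$ simultaneously, whereas the paper works directly with the corresponding branch of \eqref{def:maxint}, handling the open case explicitly and leaving the closed case to the analogous computation.
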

\begin{proof}

Suppose, for instance, that $A\in \mathcal{O}^{\uparrow}$.
 Then, $A^c\in \mathcal{C}^{\downarrow}$ due to Lemma \ref{lem:basictop}, and therefore $-\infty 1_{A^c}\in {L}^{\uparrow}$. 
 We obtain  
\begin{align*}
\phi_J(-\infty 1_{A^c})=\underset{c\in\R}\sup\{c + J_{\{-\infty 1_{A^c}>c\}}\}=\underset{0\le c}\sup\{c + J_{\emptyset}\}\vee \underset{c<0}\sup\{c + J_{A}\}=J_A.
\end{align*}
Now, let $r<0$, so that $r 1_{A^c}\in {L}^{\uparrow}$. Then,
\begin{align*}
\phi(r 1_{A^c})&=\underset{c\in\R}\sup\{c + J_{\{r 1_{A^c}>c\}}\}\\
&=\underset{c< r}\sup\{c + J_{\{r 1_{A^c}>c\}}\}\vee\underset{r\le c< 0}\sup\{c + J_{\{r 1_{A^c}>c\}}\}\vee\underset{0\le c}\sup\{c + J_{\{r 1_{A^c}>c\}}\}\\
&=r \vee J_{A} \vee (-\infty) =r \vee J_{A}.
\end{align*}
Hence, by letting $r\to-\infty$, we conclude $\underset{r<0}\inf \phi_J(r 1_{A^c})=J_A$.  
\end{proof}
In accordance with Section \ref{sec:possdistribution}, we say that a concentration $J$ admits a rate function if there exists a function $I\colon E\to[0,\infty]$ such that $J_A=-\inf_{x\in A} I(x)$ for all $A\in  \mathcal{OC}^{\uparrow}$.
In that case, the concentration is completely maxitive in the sense that $J_A\le \vee_{i\in {\mathcal I}}J_{A_i}$ for every family $(A_i)_{i\in {\mathcal I}}\subset \mathcal{OC}^{\uparrow}$ 
and $A\in \mathcal{OC}^{\uparrow}$  with $A\subset \cup_{i\in {\mathcal I}} A_i$, and the maxitive integral  $\phi_J(f)$ admits the representation $\sup_{x\in E}\{f(x)-I(x)\}$ for all $f\in {LU}^\uparrow$.
In the following, we relax the relation between concentrations and rate functions. 
As a first main result, we obtain that the maxitive integral satisfies the following upper and lower bounds.
\begin{theorem}\label{thm:mVaradhan1}
	Let $J$ be a concentration and $I\colon E\to[0,\infty]$ be a function.  
	Then, the following equivalences hold. First,
	\begin{equation}\label{eq:LDPL}
		-\inf_{x\in O}I(x) \le J_{O}\quad \mbox{for all }O\in\mathcal{O}^\uparrow
	\end{equation}
	if and only if 
	\begin{equation}\label{eq:L1}
		\phi_J(f)\ge \underset{x\in E}\sup\{f(x)-I(x)\}\quad\mbox{ for all }f\in L^\uparrow.
	\end{equation}	
	Second,
	\begin{equation}\label{eq:LDPU}
		J_C\le -\inf_{x\in C}I(x)\quad \mbox{for all }C\in\mathcal{C}^\uparrow	
	\end{equation}
	if and only if 	
	\begin{equation}\label{eq:U1}
		\phi_J(f)\le \underset{x\in E}\sup\{f(x)-I(x)\}\quad\mbox{ for all }f\in U^\uparrow. 
	\end{equation} 
	Third,
	\begin{equation}\label{eq:LDPUc}
		J_C\le -\inf_{x\in C}I(x)\quad \mbox{for all }C\in\mathcal{C}_c^\uparrow
	\end{equation}
	if and only if 	
	\begin{equation}\label{eq:U1c}
		\phi_J(f)\le \underset{x\in E}\sup\{f(x)-I(x)\}\quad\mbox{ for all }f\in U_c^\uparrow. 
	\end{equation} 
\end{theorem}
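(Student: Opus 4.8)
The plan is to prove each of the three equivalences by establishing its two implications separately, and to observe that all six implications rest on just two mechanisms: a level-set (layer-cake) decomposition for the directions ``set inequality $\Rightarrow$ integral inequality'', and evaluation at suitable indicator functions combined with Proposition~\ref{prop:concentration} for the reverse directions. The choice of which branch of the definition~\eqref{def:maxint} to use (superlevel sets $\{f>c\}$ versus $\{f\ge c\}$) is dictated by whether we work with lower bounds on $\mathcal{O}^\uparrow$ or upper bounds on $\mathcal{C}^\uparrow$, and Lemma~\ref{lem:basictop} guarantees that these level sets land in the correct topological class.

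For the lower bound (first equivalence), assume \eqref{eq:LDPL} and fix $f\in L^\uparrow$. By Lemma~\ref{lem:basictop}(iii) each superlevel set $\{f>c\}$ lies in $\mathcal{O}^\uparrow$, so \eqref{eq:LDPL} gives $J_{\{f>c\}}\ge -\inf_{y:\,f(y)>c}I(y)$. Hence, for any $x$ and any $c<f(x)$ we have $x\in\{f>c\}$ and therefore $c+J_{\{f>c\}}\ge c-I(x)$; letting $c\uparrow f(x)$ and taking the supremum over $x$ yields \eqref{eq:L1}. Conversely, assuming \eqref{eq:L1} and fixing $O\in\mathcal{O}^\uparrow$, Lemma~\ref{lem:basictop}(i) gives $O^c\in\mathcal{C}^\downarrow$, so $-\infty 1_{O^c}\in L^\uparrow$; applying \eqref{eq:L1} to this function and noting that $\sup_x\{-\infty 1_{O^c}(x)-I(x)\}=-\inf_{x\in O}I(x)$, together with the identity $J_O=\phi_J(-\infty 1_{O^c})$ from Proposition~\ref{prop:concentration}, recovers \eqref{eq:LDPL}.

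The two upper bounds are proved along identical lines, using superlevel sets of the form $\{f\ge c\}$ (which lie in $\mathcal{C}^\uparrow$ by Lemma~\ref{lem:basictop}(iv)) and the other branch of the definition~\eqref{def:maxint}. Assuming \eqref{eq:LDPU}, for $f\in U^\uparrow$ one estimates $c+J_{\{f\ge c\}}\le c-\inf_{y:\,f(y)\ge c}I(y)\le\sup_y\{f(y)-I(y)\}$, where the last step uses $c\le f(y)$ on $\{f\ge c\}$; taking the supremum over $c$ gives \eqref{eq:U1}. For the converse one checks that $-\infty 1_{C^c}\in U^\uparrow$ for $C\in\mathcal{C}^\uparrow$ and again invokes Proposition~\ref{prop:concentration}. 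The third equivalence is the same argument with $\mathcal{C}^\uparrow$ and $U^\uparrow$ replaced by $\mathcal{C}_c^\uparrow$ and $U_c^\uparrow$: the superlevel sets $\{f\ge c\}$ of $f\in U_c^\uparrow$ are by definition compactly generated, hence lie in $\mathcal{C}_c^\uparrow$, and for $C\in\mathcal{C}_c^\uparrow$ every superlevel set of $-\infty 1_{C^c}$ equals either $C$ or $\emptyset$, so $-\infty 1_{C^c}\in U_c^\uparrow$.

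The routine parts are the interchanges of suprema and the limiting argument $c\uparrow f(x)$; the only points requiring genuine care are the bookkeeping of the conventions $-\infty\cdot 0=0$ and $\inf_\emptyset I=+\infty$ (so that empty superlevel sets and the values $f(x)=-\infty$ contribute harmlessly), and the verification that each test indicator $-\infty 1_{A^c}$ lands in precisely the function class attached to the class of $A$. I expect the main obstacle to be the third equivalence, where one must confirm that the compactly generated structure of the superlevel sets of $f\in U_c^\uparrow$ matches exactly the class $\mathcal{C}_c^\uparrow$ on which \eqref{eq:LDPUc} is assumed, and, in the reverse direction, that restricting the test functions to $U_c^\uparrow$ still suffices to probe every $C\in\mathcal{C}_c^\uparrow$, which holds because $-\infty 1_{C^c}$ remains compactly generated at every level.
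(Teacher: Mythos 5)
Your proposal is correct and follows essentially the same route as the paper's proof: level-set estimates $c+J_{\{f>c\}}\ge c-I(x)$ (resp.\ $c+J_{\{f\ge c\}}\le \sup_y\{f(y)-I(y)\}$) for the ``set inequality $\Rightarrow$ integral inequality'' directions, and evaluation at $-\infty 1_{A^c}$ via Proposition~\ref{prop:concentration} for the converses, with the third equivalence obtained by substituting $\mathcal{C}_c^\uparrow$ and $U_c^\uparrow$. The only differences are cosmetic: you take $c\uparrow f(x)$ directly where the paper uses an $\varepsilon$-argument on $\{f>f(x)-\varepsilon\}$, and you spell out the compactly generated bookkeeping that the paper leaves implicit.
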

\begin{proof}
	Suppose that inequality \eqref{eq:LDPL} holds. Let $f\in L^\uparrow$ and  $\varepsilon>0$. Using the definition of the maxitive integral $\phi_J$ and the inequality  $J_{\{f>r\}}\ge -\inf_{x\in \{f>r\}} I(x)$ for all $r\in\R$, 
	\begin{align*}
		\phi_J(f)&\ge \underset{r\in\R}\sup\big\{r -\inf_{x\in \{f>r\}} I(x)\big\}\\
		&=\underset{r\in\R}\sup\;\underset{x\in \{f> r\}}\sup\{r -I(x)\}\\
		&\ge \underset{x\in E}\sup\;\underset{y\in \{f>f(x)-\varepsilon\}}\sup\{f(x)-\varepsilon -I(y)\}\\
		&\ge \underset{x\in E}\sup\{f(x)-\varepsilon -I(x)\}.
	\end{align*}
	Since $\varepsilon>0$ was arbitrary, we obtain $\phi_J(f)\ge\sup_{x\in E}\{f(x) -I(x)\}$.
	
	Conversely, suppose that inequality \eqref{eq:L1} holds. Let $O\in \mathcal{O}^\uparrow$, so that $-\infty 1_{O^c}\in L^\uparrow$. 
	Then, by Proposition \ref{prop:concentration}, 
	\[
	J_{O}=\phi_J(-\infty 1_{O^c})\ge \underset{x\in E}\sup\{-\infty 1_{O^c}(x)-I(x)\}=-\underset{x\in O}\inf I(x).
	\] 
	
	Now, suppose that inequality \eqref{eq:LDPU} holds. Let $f\in U^\uparrow$. Using the definition of the maxitive integral $\phi_J$ and the inequality 
	$J_{\{f\ge r\}}\le -\inf_{x\in \{f\ge r\}} I(x)$ for all $r\in\mathbb{R}$, 
	\begin{align*}
		\phi_J(f)&\le \underset{r\in\R}\sup\big\{r -\inf_{x\in \{f\ge r\}} I(x)\big\}\\
		&=\underset{r\in\R}\sup\;\underset{x\in \{f\ge r\}}\sup\{r -I(x)\}\\
		&\le\underset{r\in\R}\sup\;\underset{x\in \{f\ge r\}}\sup\{f(x) -I(x)\}\\
		&\le \underset{x\in E}\sup\{f(x) -I(x)\}.
	\end{align*}
	
	Conversely, suppose that inequality \eqref{eq:U1} holds. Let $C\in \mathcal{C}^\uparrow$, so that  $-\infty 1_{C^c}\in U^\uparrow$. 
	It follows from Proposition \ref{prop:concentration} that
	\[
	J_{C}=\phi_J(-\infty 1_{C^c})\le \underset{x\in E}\sup\{-\infty 1_{C^c}(x)-I(x)\}=-\underset{x\in C}\inf I(x).
	\]
	Finally, the equivalence between the inequalities \eqref{eq:LDPUc} and \eqref{eq:U1c} follows along the same line of argumentation by replacing $\mathcal{C}^\uparrow$ by $\mathcal{C}_c^\uparrow$ and $U^\uparrow$ by $U_c^\uparrow$, respectively.
\end{proof}
Notice that the upper/lower bound in the previous result are specified by upper/lower semicontinuous functions. In the same spirit, in the context of the Hausdorff moment problem \cite{miranda}, the natural extension of the moment sequence is also determined by semicontinuous functions.

Given a concentration $J$, our goal now is to find a rate function $I\colon E\to[0,\infty]$ which satisfies the lower bound \eqref{eq:LDPL} and upper bounds \eqref{eq:LDPU} or \eqref{eq:LDPUc}. We first focus on the lower bound. From the previous theorem, we immediately see that 
\[
I_{\min}(x):= \sup_{f\in  L^\uparrow} (f(x)-\phi_J(f))\quad\mbox{for all } x\in E
\]
satisfies $I_{\min}(x)\ge f(x)-\phi_J(f)$ for all $x\in E$ and $f\in L^\uparrow$, and therefore the inequalities \eqref{eq:L1} and \eqref{eq:LDPL}. 
We refer to $I_{\min}$ as the \emph{minimal rate function}. Notice that  $I_{\min}$ is minimal in the class of functions $I\colon E\to[0,\infty]$ which satisfy the inequalities \eqref{eq:LDPL} and \eqref{eq:L1}.\footnote{In fact, if $I\colon E\to[0,\infty]$ is another function for which inequality \eqref{eq:L1} holds, then $I(x)\ge f(x)-\phi_J(f)$ for all $x\in E$ and $f\in L^\uparrow$, and therefore  $I(x)\ge \sup_{f\in  L^\uparrow} (f(x)-\phi_J(f))=I_{\min}(x)$ for all $x\in E$.} By definition, the function $I_{\rm min}$ is lower semicontinuous and increasing. Moreover, it is determined through the concentration function as follows.
\begin{lemma}\label{lem:ratefunction1} 
Let $I_{\rm min}$ be the minimal rate function associated to a concentration $J$. Then, for every $x\in E$, 
\[
-I_{\rm min}(x)=\underset{U\in \mathcal{U}_x}\inf J_{{\uparrow}U}.
\]
\end{lemma}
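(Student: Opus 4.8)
The plan is to unwind the definition $I_{\rm min}(x)=\sup_{f\in L^\uparrow}(f(x)-\phi_J(f))$, so that the asserted identity becomes $\inf_{f\in L^\uparrow}(\phi_J(f)-f(x))=\inf_{U\in\mathcal{U}_x}J_{{\uparrow}U}$, and then to establish the two inequalities separately. Note first that each $J_{{\uparrow}U}$ is well defined, since ${\uparrow}U\in\mathcal{O}^\uparrow$ by Lemma~\ref{lem:basictop}(v). For the inequality ``$\le$'', I would test the left-hand infimum against a single well-chosen function for each fixed $U\in\mathcal{U}_x$: namely $g:=-\infty 1_{({\uparrow}U)^c}$. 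Since ${\uparrow}U\in\mathcal{O}^\uparrow$, Lemma~\ref{lem:basictop}(i) gives $({\uparrow}U)^c\in\mathcal{C}^\downarrow$ and hence $g\in L^\uparrow$, while Proposition~\ref{prop:concentration} yields $\phi_J(g)=J_{{\uparrow}U}$. Because $x\in U\subset{\uparrow}U$ we have $g(x)=0$, so $\phi_J(g)-g(x)=J_{{\uparrow}U}$; taking the infimum over $f$ and then over $U$ gives $-I_{\rm min}(x)\le\inf_{U\in\mathcal{U}_x}J_{{\uparrow}U}$.

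For the reverse inequality I would fix an arbitrary $f\in L^\uparrow$ with $f(x)=:a>-\infty$ and show $\phi_J(f)-f(x)\ge\inf_{U}J_{{\uparrow}U}$. For every $c<a$ the superlevel set $\{f>c\}$ lies in $\mathcal{O}^\uparrow$ by Lemma~\ref{lem:basictop}(iii) and contains $x$, so since $\mathcal{U}$ is a base there is some $U\in\mathcal{U}_x$ with $U\subset\{f>c\}$; as $\{f>c\}$ is upwards closed this forces ${\uparrow}U\subset\{f>c\}$, and monotonicity of $J$ then gives $J_{\{f>c\}}\ge J_{{\uparrow}U}\ge\inf_{U'\in\mathcal{U}_x}J_{{\uparrow}U'}=:K$. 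Plugging this into $\phi_J(f)=\sup_{c\in\R}\{c+J_{\{f>c\}}\}$ and letting $c\uparrow a$ yields $\phi_J(f)\ge a+K$, i.e. $\phi_J(f)-f(x)\ge\inf_{U}J_{{\uparrow}U}$. Taking the infimum over such $f$ produces $-I_{\rm min}(x)\ge\inf_{U\in\mathcal{U}_x}J_{{\uparrow}U}$, and the two bounds combine to the claim.

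The only real subtlety is the bookkeeping with the value $-\infty$, which is where I expect the care to be needed rather than in the main estimates. I would first record that the supremum defining $I_{\rm min}$ is unchanged when restricted to $f$ with $f(x)>-\infty$: the constant $f\equiv 0$ gives $f(x)-\phi_J(f)=0$, so $I_{\rm min}(x)\ge 0$, whereas under the convention $-\infty-(-\infty)=-\infty$ any $f$ with $f(x)=-\infty$ contributes a term equal to $-\infty$ and hence never competes for the supremum. This legitimises restricting to $f(x)>-\infty$ in the reverse inequality and keeps the arithmetic $\sup_{c<a}\{c+K\}=a+K$ well defined, including the degenerate case $K=\inf_{U}J_{{\uparrow}U}=-\infty$, in which both sides of the asserted identity equal $-\infty$.
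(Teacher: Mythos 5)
Your proposal is correct and is essentially the paper's own argument: the same test functions $-\infty 1_{({\uparrow}U)^c}$ (via Proposition~\ref{prop:concentration}) give the inequality $-I_{\rm min}(x)\le \inf_{U\in\mathcal{U}_x}J_{{\uparrow}U}$, and the same lower-semicontinuity-plus-monotonicity localization ${\uparrow}U\subset\{f>c\}$ gives the reverse one -- you merely run that second estimate directly through the definition of $\phi_J$ and the monotonicity of $J$, where the paper phrases it through the monotonicity and translation property of $\phi_J$ together with Proposition~\ref{prop:concentration}. Your explicit bookkeeping for the case $f(x)=-\infty$ is a careful touch that the paper leaves implicit.
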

\begin{proof}
Let $x\in E$ and $U\in\mathcal{U}_x$.  
By Lemma \ref{lem:basictop}, we have $({\uparrow}U)^c\in \mathcal{C}^{\downarrow}$, and therefore  $-\infty 1_{({\uparrow}U)^c}\in {L}^{\uparrow}$.
Using the definition of the minimal rate function $I_{\rm min}$ and Proposition~\ref{prop:concentration}, we obtain $I_{\rm min}(x)\ge -\phi_J(-\infty 1_{({\uparrow}U)^c})= -J_{{\uparrow}U}$.
Hence, by taking the supremum over all $U\in\mathcal{U}_x$, 
\[
I_{\rm min}(x)\ge -\underset{U\in\mathcal{U}_x}\inf J_{{\uparrow}U}.
\]
As for the other inequality, let $f\in L^{\uparrow}$ and $\varepsilon>0$.
Since $f$ is lower semicontinuous, there exists $U\in\mathcal{U}_x$ such that $f(z)\ge f(x)-\varepsilon$  for all $z\in U$.
In particular, $f(z)\ge f(x)-\varepsilon$ for all $z\in {\uparrow}U$ because  $f$ is increasing. Then, by monotonicity and the translation property of $\phi_J$ and Proposition~\ref{prop:concentration}, 
\begin{align*}
	\phi_J(f)&\ge \phi_J( (f(x)-\varepsilon)1_{{\uparrow}U} - \infty 1_{({\uparrow}U)^c} )\\
	&=f(x)-\varepsilon + \phi_J(-\infty  1_{({\uparrow}U)^c})\\
	&= f(x)-\varepsilon + J_{{\uparrow}U}.
\end{align*}
Since $\varepsilon>0$ was arbitrary, we get $\phi_J(f)\ge f(x) + J_{{\uparrow}U}\ge f(x) + \inf_{U\in\mathcal{U}_x}J_{{\uparrow}U}$.
This shows that for all $f\in L^{\uparrow}$,
\[
\underset{U\in\mathcal{U}_x}\inf J_{{\uparrow}U}\le  -f(x) + \phi_J(f).
\] 
By taking the infimum over all $f\in L^{\uparrow}$, we conclude $\underset{U\in \mathcal{U}_x}\inf J_{{\uparrow}U}\le  -I_{\rm min}(x)$.
\end{proof}
For the minimal rate function to satisfy the upper bound \eqref{eq:LDPU} or \eqref{eq:LDPUc}, additional assumptions on the concentration are required. 
We will see that the concentration must necessarily fulfil a certain form of maxitivity if the lower and upper bounds simultaneously hold. 
This is the context of the next subsection.

\subsection{Weak maxitivity} As discussed in Section~\ref{sec:possdistribution}, a capacity which admits a possibility distribution in the sense of \eqref{eq:possibilityIntegral} (respectively a concentration which admits rate function) is completely maxitive.  In case that inequality~\eqref{eq:possibilityMeasure} is only assumed for sequences or a finite sets, then the capacity is called countably maxitive or finitely maxitive, respectively. For a detailed discussion on different maxitivity concepts, we refer to~\cite{cattaneo2016maxitive}.
In case that a concentration satisfies only upper and lower bounds, we obtain a weaker form of maxitivity.
\begin{definition}
	A concentration function $J$ is called \emph{weakly maxitive} if 
	\[
	J_A\le \vee_{i=1}^n J_{B_i}\quad
	\mbox{ for all } A\in \mathcal{C}^{\uparrow}\mbox{ and }B_1,\dots,B_n\in  \mathcal{O}^{\uparrow},\,n\in\N, \mbox{ such that }A\subset \cup_{i=1}^n B_{i}.
	\]
Likewise,  a function $\phi\colon {LU}^\uparrow\to [-\infty,\infty]$ is called \emph{weakly maxitive}  if 
	\[
	\phi(f)\le \vee_{i=1}^n \phi(g_i)
	\mbox{ for all }f\in {U}^\uparrow\mbox{ and }g_1,\dots,g_n\in {L}^\uparrow,\,n\in\N,\mbox{ such that }f\le \vee_{i=1}^n g_i.
	\]
\end{definition}

As a direct consequence of Theorem~\ref{thm:mVaradhan1}, we obtain the following result.
\begin{proposition}
	Suppose that a concentration $J$ satisfies the inequalities \eqref{eq:LDPL} and \eqref{eq:LDPU} for a
	function $I\colon E\to [0,\infty]$. Then, $J$ and $\phi_J$ are weakly maxitive.
\end{proposition}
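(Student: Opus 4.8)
The plan is to exploit the two equivalences established in Theorem~\ref{thm:mVaradhan1}, which translate the set-level bounds \eqref{eq:LDPL} and \eqref{eq:LDPU} into the functional inequalities \eqref{eq:L1} and \eqref{eq:U1}. Since weak maxitivity is asserted both for the concentration $J$ and for the maxitive integral $\phi_J$, I would treat the two claims separately, arguing directly at the set level for $J$ and using the functional form for $\phi_J$. Both arguments have the same structure: combine the upper bound on the ``$A$-side'' object with the lower bounds on the ``$B$-side'' objects, and close the gap with a single elementary comparison.

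For the concentration, I would fix $A\in\mathcal{C}^\uparrow$ and $B_1,\dots,B_n\in\mathcal{O}^\uparrow$ with $A\subset\cup_{i=1}^n B_i$. The upper bound \eqref{eq:LDPU} gives $J_A\le -\inf_{x\in A}I(x)$, while the lower bound \eqref{eq:LDPL} gives $J_{B_i}\ge-\inf_{x\in B_i}I(x)$ for each $i$, and hence $\vee_{i=1}^n J_{B_i}\ge -\wedge_{i=1}^n\inf_{x\in B_i}I(x)$. The only remaining point is the purely set-theoretic comparison $\wedge_{i=1}^n\inf_{x\in B_i}I(x)\le\inf_{x\in A}I(x)$: any $x\in A$ lies in some $B_i$, so $I(x)\ge\inf_{y\in B_i}I(y)\ge\wedge_{j=1}^n\inf_{y\in B_j}I(y)$, and taking the infimum over $x\in A$ yields the claim. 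Chaining the three inequalities gives $J_A\le\vee_{i=1}^n J_{B_i}$.

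For the maxitive integral, I would fix $f\in U^\uparrow$ and $g_1,\dots,g_n\in L^\uparrow$ with $f\le\vee_{i=1}^n g_i$. By \eqref{eq:U1} one has $\phi_J(f)\le\sup_{x\in E}\{f(x)-I(x)\}$, and by \eqref{eq:L1} one has $\phi_J(g_i)\ge\sup_{x\in E}\{g_i(x)-I(x)\}$, so $\vee_{i=1}^n\phi_J(g_i)\ge\vee_{i=1}^n\sup_{x\in E}\{g_i(x)-I(x)\}$. The pointwise domination $f(x)\le\max_i g_i(x)$ then gives $f(x)-I(x)\le\max_i\{g_i(x)-I(x)\}\le\vee_{i=1}^n\sup_{y\in E}\{g_i(y)-I(y)\}$ for every $x\in E$; taking the supremum over $x$ and combining with the two bounds yields $\phi_J(f)\le\vee_{i=1}^n\phi_J(g_i)$.

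The argument is essentially bookkeeping once Theorem~\ref{thm:mVaradhan1} is in hand, so I do not anticipate a genuine obstacle. The only points requiring care are the order of the suprema and infima and the tracking of signs when passing between the minimization of $I$ over sets and the maximization of $f-I$ over points; all quantities live in $[-\infty,\infty]$ and the inequalities remain valid there, with an infinite $\inf_{x\in B_i}I(x)$ simply making the corresponding lower bound on $J_{B_i}$ vacuous. Membership of $f$, $g_i$ in $U^\uparrow$, $L^\uparrow$ and of $A$, $B_i$ in $\mathcal{C}^\uparrow$, $\mathcal{O}^\uparrow$ is exactly what makes both bounds of Theorem~\ref{thm:mVaradhan1} applicable, so no further regularity input is needed.
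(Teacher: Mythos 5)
Your proposal is correct and follows essentially the same route as the paper: the set-level claim is obtained by chaining \eqref{eq:LDPU}, the elementary comparison of infima over $A$ and $\cup_{i=1}^n B_i$, and \eqref{eq:LDPL}, while the claim for $\phi_J$ is obtained by combining the functional bounds \eqref{eq:U1} and \eqref{eq:L1} from Theorem~\ref{thm:mVaradhan1} with the pointwise domination $f\le\vee_{i=1}^n g_i$. The only cosmetic difference is that the paper writes the middle step as $-\inf_{x\in A}I(x)\le-\inf_{x\in\cup_{i=1}^n B_i}I(x)=-\wedge_{i=1}^n\inf_{x\in B_i}I(x)$, whereas you verify the same inequality directly.
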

\begin{proof}
	Let $A\in\mathcal{C}^\uparrow$ and $B_1,\dots,B_n\in  \mathcal{O}^{\uparrow}$, $n\in\N$, with $A\subset \cup_{i=1}^n B_i$. 
	Then, by monotonicity of $J$ and the inequalities   \eqref{eq:LDPU} and \eqref{eq:LDPL},
	\[
	J_A\le-\inf_{x\in A} I(x)\le-\inf_{x\in \cup_{i=1}^n B_i} I(x)=-\wedge_{i=1}^n \inf_{x\in B_i} I(x)\le \vee_{i=1}^n J_{B_i}.
	\]	
	Furthermore, let $f\in U^\uparrow$ and $g_1,\dots,g_n\in L^\uparrow$, $n\in\N$, with $f\le \vee_{i=1}^n g_i$. Then, it follows from Theorem~\ref{thm:mVaradhan1} that
	\begin{align*}
		\phi_J(f)&\le \underset{x\in E}\sup\{f(x)-I(x)\}\\
		&\le \underset{x\in E}\sup\{\vee_{i=1}^n g_i(x)-I(x)\}\\
		&\le \vee_{i=1}^n \underset{x\in E}\sup\{g_i(x)-I(x)\}\\
		&\le \vee_{i=1}^n \phi_J(g_i).\qedhere
	\end{align*} 
\end{proof}
We now turn to the converse question and provide bounds for weakly maxitive concentrations.
Thereby, we focus on the upper bound.
 \begin{proposition}\label{prop:LDPupper}
	Suppose that $J$ is weakly maxitive. Then, $J$ satisfies the upper bound~\eqref{eq:LDPUc} for the minimal rate function $I_{\rm min}$.
\end{proposition}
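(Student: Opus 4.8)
The goal is to show that a weakly maxitive concentration $J$ satisfies $J_C\le -\inf_{x\in C}I_{\rm min}(x)$ for every $C\in\mathcal{C}_c^\uparrow$, where $I_{\rm min}(x)=-\inf_{U\in\mathcal{U}_x}J_{{\uparrow}U}$ by Lemma~\ref{lem:ratefunction1}. The plan is to fix such a $C={\uparrow}K$ with $K$ compact, and reduce the desired inequality to a finite subcover argument that lets weak maxitivity bite. Concretely, I would fix any real number $\alpha>\inf_{x\in C}I_{\rm min}(x)$, which is the only case that needs argument (if the infimum is $+\infty$ the claim is trivial since $J_C\le 0$ forces nothing, and we handle $\alpha$ slightly above the infimum), and aim to prove $J_C\le -\alpha$ is false only when forced, i.e. to bound $J_C$ by $-\beta$ for every $\beta<\inf_{x\in C}I_{\rm min}(x)$.

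First I would set $\beta<\inf_{x\in C}I_{\rm min}(x)$, so that $I_{\rm min}(x)>\beta$ for every $x\in C$, and in particular for every $x\in K$. By Lemma~\ref{lem:ratefunction1}, $-I_{\rm min}(x)=\inf_{U\in\mathcal{U}_x}J_{{\uparrow}U}<-\beta$, so for each $x\in K$ there exists a basic open set $U_x\in\mathcal{U}_x$ with $J_{{\uparrow}U_x}<-\beta$. The family $\{U_x\}_{x\in K}$ is an open cover of the compact set $K$, so there is a finite subcover $U_{x_1},\dots,U_{x_n}$. Then $K\subset\cup_{i=1}^n U_{x_i}$, and taking upsets gives $C={\uparrow}K\subset{\uparrow}(\cup_{i=1}^n U_{x_i})=\cup_{i=1}^n{\uparrow}U_{x_i}$. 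By Lemma~\ref{lem:basictop}(v) each ${\uparrow}U_{x_i}\in\mathcal{O}^\uparrow$, and $C\in\mathcal{C}^\uparrow$, so the hypotheses of weak maxitivity are met.

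Now I would invoke weak maxitivity directly:
\[
J_C\le \vee_{i=1}^n J_{{\uparrow}U_{x_i}}<-\beta.
\]
Since this holds for every $\beta<\inf_{x\in C}I_{\rm min}(x)$, letting $\beta\uparrow\inf_{x\in C}I_{\rm min}(x)$ yields $J_C\le -\inf_{x\in C}I_{\rm min}(x)$, which is exactly \eqref{eq:LDPUc} for $I_{\rm min}$. I would also dispose of the degenerate case $\inf_{x\in C}I_{\rm min}(x)=\infty$ (where the cover argument still works for arbitrarily large $\beta$) and the case $C=\emptyset$ (where $J_\emptyset=-\infty$ makes the inequality automatic) at the start.

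The main obstacle is purely the bookkeeping of the covering-to-upset passage: one must be careful that compact generation of $C$ is genuinely used, since weak maxitivity only licenses \emph{finite} unions, and without compactness of $K$ one cannot extract a finite subcover. This is precisely why the result is stated for $\mathcal{C}_c^\uparrow$ rather than all of $\mathcal{C}^\uparrow$. The one subtle point worth checking is that the chosen $U_x$ cover $K$ rather than the larger set $C$, and that ${\uparrow}(\cup U_{x_i})=\cup{\uparrow}U_{x_i}$ (which is immediate, as ${\uparrow}$ commutes with unions) together with ${\uparrow}K=C$ then delivers the cover of $C$ needed to apply weak maxitivity.
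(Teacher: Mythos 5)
Your proof is correct and follows essentially the same route as the paper's: apply Lemma~\ref{lem:ratefunction1} to pick, for each $x\in K$, a basic neighborhood whose upset has small concentration, extract a finite subcover of the compact generator $K$, and then invoke weak maxitivity on $C={\uparrow}K\subset\cup_{i}{\uparrow}U_{x_i}$. The only cosmetic difference is bookkeeping: you work with a level $\beta<\inf_{x\in C}I_{\rm min}(x)$ and let $\beta$ increase, while the paper truncates via $(I_{\rm min}(x_i)-\varepsilon)\wedge\varepsilon^{-1}$ and lets $\varepsilon\downarrow 0$; both devices handle the possibly infinite values of $I_{\rm min}$ equally well (though note your early parenthetical that the case $\inf_{x\in C}I_{\rm min}(x)=\infty$ is ``trivial'' is inaccurate as stated---there the claim forces $J_C=-\infty$---but your arbitrarily-large-$\beta$ argument does dispose of it correctly).
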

\begin{proof}
	Let $C\in\mathcal{C}_c^\uparrow$. By definition, $C\in\mathcal{C}^\uparrow$ and $C={\uparrow}K$ for some compact $K\subset E$. Fix $\varepsilon>0$. Then, by Lemma~\ref{lem:ratefunction1} and compactness, there exist $x_i\in K$ and $U_i\in\mathcal{U}_{x_i}$ for $i=1,\ldots,n$  such that $K\subset \bigcup_{i=1}^n U_i$ and 
	\[
	-J_{{\uparrow}U_i}\ge (I_{\rm min}(x_i)-\varepsilon)\wedge \varepsilon^{-1}\quad \mbox{ for all }i=1,\ldots,n.
	\]
	Consequently, since ${\uparrow}U_i\in \mathcal{O}^\uparrow$ for all $i=1,\dots, n$ due to Lemma~\ref{lem:basictop},  $ C\subset \bigcup_{i=1}^n {\uparrow}U_i$,
	and $J$ is weakly maxitive, 
	\[
	-J_{C} \ge \wedge_{i=1}^n -J_{{\uparrow}U_i}\ge \wedge_{i=1}^n (I_{\rm min}(x_i)-\varepsilon)\wedge \varepsilon^{-1}\ge  (\underset{x\in K}\inf I_{\rm min}(x)-\varepsilon)\wedge \varepsilon^{-1}. 
	\]
	Since $\varepsilon>0$ was arbitrary, we obtain that 
	\[
	-J_{C}\ge \inf_{x\in K} I_{\rm min}(x)=\inf_{x\in C}\I(x),
	\]
	where the last equality holds because $\I$ is increasing.  
\end{proof}
For the minimal rate function to satisfy the stronger upper bound \eqref{eq:LDPU}, we need an additional tightness assumption.
\begin{definition}
	A concentration  $J$ is called \emph{tight}, if for all $C\in \mathcal{C}^\uparrow$ and 
	$\varepsilon>0$, there exists $K\subset E$ compact with ${\uparrow}(C\cap K)\in  \mathcal{C}^\uparrow_c$ such that
	\begin{equation}\label{eq:tightness0}
		J_C\le \left(J_{{\uparrow}(C\cap K)}+\varepsilon\right)\vee(-\varepsilon^{-1}).
	\end{equation}
Notice that $C\cap K$ is compact. In typical situations,  ${\uparrow}(C\cap K)$ is closed, see Lemma~\ref{lem:compactUp} in Appendix A.
\end{definition} 

\begin{corollary}\label{cor:tightness}
	Let $J$ be an weakly maxitive concentration which is tight. Then, $J$ satisfies the upper bound \eqref{eq:LDPU} with minimal rate function $I_{\rm min}$.
\end{corollary}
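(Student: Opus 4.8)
The plan is to \emph{localize to compactly generated sets}, where Proposition~\ref{prop:LDPupper} already delivers the upper bound, and then to \emph{lift back to arbitrary} $C\in\mathcal{C}^\uparrow$ by means of tightness. The guiding idea is that tightness lets us replace a general closed upward set $C$ by the compactly generated set ${\uparrow}(C\cap K)$, at the cost of an error controlled by $\varepsilon$, and that Proposition~\ref{prop:LDPupper} supplies exactly the bound we need on this smaller set.

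First I would fix $C\in\mathcal{C}^\uparrow$ and $\varepsilon>0$, and invoke the tightness of $J$ to produce a compact $K\subset E$ with ${\uparrow}(C\cap K)\in\mathcal{C}^\uparrow$ and
\[
J_C\le\left(J_{{\uparrow}(C\cap K)}+\varepsilon\right)\vee(-\varepsilon^{-1}).
\]
As observed right after the definition of tightness, $C\cap K$ is compact, so ${\uparrow}(C\cap K)\in\mathcal{C}_c^\uparrow$ and Proposition~\ref{prop:LDPupper} applies to it, giving $J_{{\uparrow}(C\cap K)}\le-\inf_{x\in{\uparrow}(C\cap K)}I_{\rm min}(x)$.

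Next I would simplify the infimum on the right. Since $I_{\rm min}$ is increasing, the infimum over the upset ${\uparrow}(C\cap K)$ is already attained on $C\cap K$, i.e. $\inf_{x\in{\uparrow}(C\cap K)}I_{\rm min}(x)=\inf_{x\in C\cap K}I_{\rm min}(x)$; and from $C\cap K\subset C$ we obtain $\inf_{x\in C\cap K}I_{\rm min}(x)\ge\inf_{x\in C}I_{\rm min}(x)$. Chaining these two facts with the tightness estimate yields
\[
J_C\le\left(-\inf_{x\in C}I_{\rm min}(x)+\varepsilon\right)\vee(-\varepsilon^{-1}).
\]

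Finally I would let $\varepsilon\downarrow 0$: the term $-\varepsilon^{-1}$ tends to $-\infty$ while $-\inf_{x\in C}I_{\rm min}(x)+\varepsilon$ tends to $-\inf_{x\in C}I_{\rm min}(x)$, so the maximum converges to $-\inf_{x\in C}I_{\rm min}(x)$, which is precisely the upper bound \eqref{eq:LDPU}. The only point requiring a little care is the bookkeeping with $-\infty$: when $\inf_{x\in C}I_{\rm min}(x)=\infty$ the first term is $-\infty$ and the bound forces $J_C=-\infty$ through the $-\varepsilon^{-1}$ term, which is exactly the claimed value. I do not anticipate a genuine obstacle in this corollary, since the substantive work is already contained in Proposition~\ref{prop:LDPupper} and tightness serves only to bridge from $\mathcal{C}_c^\uparrow$ to $\mathcal{C}^\uparrow$; the one step worth verifying explicitly is that the monotonicity of $I_{\rm min}$ legitimizes passing from the upset ${\uparrow}(C\cap K)$ to $C\cap K$ inside the infimum.
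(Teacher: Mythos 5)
Your proposal is correct and follows essentially the same route as the paper: fix $C\in\mathcal{C}^\uparrow$ and $\varepsilon>0$, use tightness to pass to ${\uparrow}(C\cap K)\in\mathcal{C}_c^\uparrow$, apply Proposition~\ref{prop:LDPupper} there, and let $\varepsilon\downarrow 0$. The only cosmetic difference is in comparing the infima: the paper uses the inclusion ${\uparrow}(C\cap K)\subset C$ (valid since $C$ is upwards closed), while you use the monotonicity of $I_{\rm min}$ together with $C\cap K\subset C$; both are immediate and equivalent.
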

\begin{proof}
	Let $C\in\mathcal{C}^\uparrow$ and $\varepsilon>0$. Since $J$ is tight, there exists $K\subset E$ compact with ${\uparrow}(C\cap K)\in  \mathcal{C}^\uparrow_c$ such that inequality \eqref{eq:tightness0} holds. Hence, we can apply Proposition \ref{prop:LDPupper} to obtain
	\begin{align*}
		J_C
		&\le \left(J_{{\uparrow}(C\cap K)}+\varepsilon\right)\vee (-\varepsilon^{-1}) \le \left(-\underset{x\in {\uparrow}(C\cap K)}\inf I_{\rm min}(x)+\varepsilon\right)\vee(-\varepsilon^{-1})\\ 
		&\le \left(-\underset{x\in C}\inf I_{\rm min}(x)+\varepsilon\right)\vee(-\varepsilon^{-1}).
	\end{align*}
	Since $\varepsilon>0$ was arbitrary, we conclude that $J_C\le -\inf_{x\in C} I_{\rm min}(x)$. 
\end{proof}
The results achieved so far can be summarized as follows. For a weakly maxitive concentration $J$ there exists a function $I\colon E\to[0,\infty]$ such that 
\begin{equation}\label{mot:bounds}
-\inf_{x\in O} I(x)\le J_O\quad\mbox{and}\quad J_C\le-\inf_{x\in C} I(x)
\end{equation}
for all $O\in \mathcal{O}^\uparrow$ and $C\in  \mathcal{C}^\uparrow_c$, respectively $C\in  \mathcal{C}^\uparrow$ if the concentration is in addition tight. In this case, 
the respective maxitive integral is given by $\phi_J(f)=\sup_{x\in E}(f(x)-I(x))$ for all $f\in L^\uparrow\cap U^\uparrow_c$ and $f\in L^\uparrow\cap U^\uparrow$, respectively. Moreover, the function $I$ can be replaced by the minimal rate function $I_{\rm min}$. In other words, the class of weakly maxitive concentrations (capacities) can be connected with a rate function (possibility distribution), which fully determines the associated maxitive integral for sufficiently regular functions. To what extent the rate function is unique will be discussed in Section~\ref{sec:monLDP},
and how it can be determined by means of convex duality arguments in Section~\ref{sec:convRateFunc}. 

\section{Representation of weakly maxitive non-linear expectations}\label{sec:integralRep}
As remarked in the previous section, the maxitive integral  \eqref{def:maxint} has the same properties as a non-linear expectation which satisfies the translation property.
In the following, we focus on the converse direction and investigate when a non-linear expectation with the translation property can be represented as a  maxitive integral. 
To do so, the following continuity condition is necessary.
\begin{lemma}\label{lem:upperextension}
	For every $f\in {LU}^{\uparrow}$, 
	$$\underset{N\to\infty}\lim \phi_J(f\wedge N)=\phi_J(f)\quad\mbox{and}\quad\underset{N\to\infty}\lim \phi_J(f\vee(-N))=\phi_J(f).$$
\end{lemma}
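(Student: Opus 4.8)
The plan is to show that truncation from above and from below interacts transparently with the maxitive integral through its super-level-set representation. First I would record that truncation preserves the relevant function classes: if $f\in L^\uparrow$ then $f\wedge N$ and $f\vee(-N)$ are again increasing and lower semicontinuous (a finite minimum or maximum of lower semicontinuous functions is lower semicontinuous, and monotonicity is clearly preserved, while the range stays in $[-\infty,\infty)$), and likewise $f\in U^\uparrow$ yields $f\wedge N,f\vee(-N)\in U^\uparrow$. In particular all truncations lie in $LU^\uparrow$, so $\phi_J$ is defined on them. This lets me work uniformly with the extended concentration $\bar J$ and the representation $\phi_J(g)=\sup_{c\in\R}\{c+\bar J_{\{g>c\}}\}$ recorded immediately after \eqref{def:maxint}, valid for every $g\in LU^\uparrow$, thereby avoiding a separate treatment of the $L^\uparrow$ and $U^\uparrow$ halves of the definition.

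Next I would compute the super-level sets of the truncations in terms of those of $f$. For $f\wedge N$ one has $\{f\wedge N>c\}=\{f>c\}$ when $c<N$ and $\{f\wedge N>c\}=\emptyset$ when $c\ge N$; for $f\vee(-N)$ one has $\{f\vee(-N)>c\}=\{f>c\}$ when $c\ge -N$ and $\{f\vee(-N)>c\}=E$ when $c<-N$. Substituting these into the $\bar J$-representation and using $\bar J_\emptyset=-\infty$ and $\bar J_E=0$ yields the explicit formulas
\[
\phi_J(f\wedge N)=\sup_{c<N}\{c+\bar J_{\{f>c\}}\},\qquad
\phi_J(f\vee(-N))=(-N)\vee\sup_{c\ge -N}\{c+\bar J_{\{f>c\}}\}.
\]

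The limit for $f\wedge N$ is then immediate: as $N\to\infty$ the index set $\{c<N\}$ increases to $\R$, so the first quantity increases to $\sup_{c\in\R}\{c+\bar J_{\{f>c\}}\}=\phi_J(f)$. For $f\vee(-N)$ I would set $S_N:=\sup_{c\ge -N}\{c+\bar J_{\{f>c\}}\}$; as $N\to\infty$ the set $\{c\ge -N\}$ increases to $\R$, so $S_N\uparrow\phi_J(f)$, while the competing term $-N\to-\infty$.

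The step I expect to require the most care is precisely this passage to the limit for $f\vee(-N)$, because $\phi_J(f\vee(-N))$ is the maximum of a term $S_N$ increasing to $\phi_J(f)$ and a term $-N$ decreasing to $-\infty$, and the two cases must be separated. If $\phi_J(f)>-\infty$, then $S_N\to\phi_J(f)$ is eventually strictly larger than $-N$, so $(-N)\vee S_N=S_N\to\phi_J(f)$; if instead $\phi_J(f)=-\infty$, then $S_N=-\infty$ for every $N$ and $(-N)\vee S_N=-N\to-\infty=\phi_J(f)$. In both cases the limit equals $\phi_J(f)$, which completes the argument. The only remaining bookkeeping—that the empty super-level sets contribute $-\infty$ and the full ones contribute $0$ through $\bar J_\emptyset=-\infty$ and $\bar J_E=0$—is routine.
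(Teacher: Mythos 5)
Your proof is correct and follows essentially the same route as the paper: compute the super-level sets of $f\wedge N$ and $f\vee(-N)$ in terms of those of $f$, substitute into the supremum representation of $\phi_J$, and pass to the limit. The only differences are cosmetic — you use the extended concentration $\bar J$ to treat the $L^\uparrow$ and $U^\uparrow$ cases uniformly (the paper does $L^\uparrow$ and notes the other case is similar), and for $f\vee(-N)$ you resolve the limit by a case analysis on whether $\phi_J(f)=-\infty$, where the paper instead sandwiches $\phi_J(f)\le\phi_J(f\vee(-N))\le(-N)\vee\phi_J(f)$ using monotonicity.
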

\begin{proof}
	Let $f\in {L}^{\uparrow}$, the other case follows with similar arguments. By definition of the maxitive integral, for each $N\in\N$, 
	\begin{align*}
		\phi_J(f\wedge N)&=\underset{c\in \R}\sup\{c + J_{\{(f\wedge N)>c\}}\}
		=\underset{c< N}\sup\{c + J_{\{f>c\}}\}\vee \underset{N\le c}\sup\{c + J_\emptyset\}
		=\underset{c< N}\sup\{c + J_{\{f>c\}}\},
	\end{align*}
	 and therefore, 
	\[
	\underset{N\to\infty}\lim \phi_J(f\wedge N)=\underset{N\in\N}\sup\phi_J(f\wedge N)
	=\sup_{N\in\N} \sup_{c< N} \{c + J_{\{f>c\}}\}=\underset{c\in\R}\sup\{c + J_{\{f>c\}}\}=\phi_J(f).
	\]
	As for the second statement, for each $N\in\N$,  
	\begin{align*}
		\phi(f\vee (-N))&=\underset{c\in \R}\sup\{c + J_{\{f\vee (-N)>c\}}\}\\
		&=\underset{c< -N}\sup\{c + J_{\{f\vee (-N)>c\}}\}\vee \underset{-N\le c}\sup\{c + J_{\{f\vee (-N)>c\}}\}\\
		&=\underset{c< -N}\sup\{c + J_{E}\}\vee \underset{-N\le c}\sup\{c + J_{\{f>c\}}\}\\
		&\le(-N)\vee \underset{c\in\R}\sup\{c + J_{\{f>c\}}\}\\
		&= (-N)\vee \phi_J(f).
	\end{align*}
    Hence, by monotonicity of $\phi_J$, 
	\[
	\phi_J(f)\le\underset{N\to\infty}\lim \phi_J(f\vee (-N))\le \underset{N\to\infty}\lim (-N)\vee \phi_J(f)=\phi_J(f). \qedhere
	\]
\end{proof}
As shown in \cite[Corollary 6]{cattaneo2016maxitive}, every finitely maxitive non-linear expectation with the translation property admits a representation in terms of a maxitive integral; see also~\cite[Corollary 7]{cattaneo2016maxitive} and \cite[Proposition 2.2]{kupper}.   
We next provide a related representation result for weakly maxitive non-linear expectations.
Let  $\overline{L}^\uparrow$, $\overline{U}^\uparrow$, and $\overline{LU}^\uparrow$  denote the sets of all functions in ${L}^\uparrow$, ${U}^\uparrow$, and ${LU}^\uparrow$, respectively, which are bounded from above. Moreover, let $C_b^{\uparrow}$ be the space of all increasing bounded continuous functions $f\colon E\to\R$.
\begin{theorem}\label{thm:maxitiveRep}
	Suppose that $\psi\colon \overline{LU}^{\uparrow}\to [-\infty,\infty)$ satisfies
	\begin{enumerate}
		\item $\psi(0)=0$,
		\item $\psi(f)\le\psi(g)$ whenever~$f\le g$,
		\item $\psi(f+c)=\psi(f)+c$ for all $c\in\R$,
	\end{enumerate}
with concentration $J^\psi_A:=\psi(-\infty 1_{A^c})$ for all $A\in \mathcal{OC}^{\uparrow}$. 
	If $\psi$ is weakly maxitive,\footnote{I.e.,~$\phi(f)\le \vee_{i=1}^n \phi(g_i)$ for all $f\in \overline{U}^\uparrow$ and $g_1,\dots,g_n\in \overline{L}^\uparrow$, $n\in\N$, such that $f\le \vee_{i=1}^n g_i$.} 
	then
	\[	\psi(f)=	\phi_{J^\psi}(f)\quad\mbox{for all }f\in \overline{L}^{\uparrow}\cap \overline{U}^{\uparrow}.\]
\end{theorem}
\begin{proof} 
	First, suppose that $f\in C^{\uparrow}_b$ and let $a,b\in\R$ such that $a<f(x)<b$ for all $x\in E$. 
	Fix $N\in\mathbb{N}$, and define for $0\le j\le N-1$,
		\[
	a_{N,j}:=a + j\frac{b-a}{N},\quad O_{N,j}=\{f>a_{N,j}\},\quad\mbox{and}\quad C_{N,j}=\{f\ge a_{N,j}\}.
	\]
	We consider the simple functions
	\[
	l_N:=\vee^{N-1}_{j=0}\big(- \infty 1_{O_{N,j}^c}+a_{N,j} 1_{O_{N,j}} \big)\quad\mbox{ and }\quad 
	u_N:=\vee^{N-1}_{j=0}\big(- \infty 1_{C_{N,j}^c}+a_{N,j} 1_{C_{N,j}} \big).   
	\] 
	By construction, it holds $l_N\in \overline{L}^{\uparrow}$, $u_N\in \overline{U}^{\uparrow}$, and $f-\frac{b-a}{N}\le l_N\le u_N\le f$.
	Using the definition of the maxitive integral and the translation property of $\psi$,
	\begin{align*}
		\phi_{J^\psi}(l_N)
		&=\vee_{j=0}^{N-1}\{a_{N,j} + J_{O_{N,j}}^\psi\}\\
		&=\vee_{j=0}^{N-1}\psi\big( - \infty 1_{O_{N,j}^c}+a_{N,j}1_{O_{N,j}}\big)\\
		&\le \psi(l_N).
	\end{align*}
	Moreover, it follows from 
	$$u_N\le l_N + \frac{b-a}{N} =\vee^{N-1}_{j=0}\big(- \infty 1_{O_{N,j}^c}+\left(a_{N,j}+\tfrac{b-a}{N}\right)1_{O_{N,j}} \big),$$
	and the weak maxitivity of $\psi$ that 
	\begin{align*}
		\psi(u_N)&\le \vee_{j=0}^{N-1}\psi\Big(- \infty 1_{O_{N,j}^c}+\left(a_{N,j}+\tfrac{b-a}{N}\right)1_{O_{N,j}} \Big)\\
		&=\tfrac{b-a}{N} + \vee_{j=0}^{N-1}\psi\big(- \infty 1_{O_{N,j}^c}+a_{N,j}1_{O_{N,j}} \big)\\
		&=\tfrac{b-a}{N} + \phi_{J^\psi}(l_N).
	\end{align*}
In combination with $\phi_{J^\psi}(l_N)\le  \psi(l_N)\le  \psi(u_N)$, we obtain
	\[
	|\psi(u_N) - \phi_{J^\psi}(l_N)|\le \tfrac{b-a}{N}.
	\]	
Hence, as a consequence of the monotonicity and translation property of $\psi$ and $\phi_{J^\psi}$,
	\begin{align*}
		|\psi(f)-\phi_{J^\psi}(f)|&\le |\psi(f)-\psi(u_N)|+|\psi(u_N)-\phi_{J^\psi}(l_N)|+ |\phi_{J^\psi}(l_N) -\phi_{J^\psi}(f)|\\
		&\le \tfrac{b-a}{N} + \tfrac{b-a}{N} + \tfrac{b-a}{N}. 
	\end{align*}
	Letting $N\to\infty$ results in $\psi(f)=\phi_{J^\psi}(f)$ as desired. 
	
	Second, suppose that $f\in \overline{L}^{\uparrow}\cap \overline{U}^{\uparrow}$. 
	Let $N\in\N$, so that   $f\vee (-N)\in C^{\uparrow}_b$, and therefore $\psi(f\vee (-N))=\phi_{J^\psi}(f\vee (-N))$ due to the previous step.
	Moreover, since $\psi$ is weakly maxitive, 
	\[
		\psi(f)\le \psi(f\vee (-N))\le  \psi(f)\vee (-N).
	\]
	Hence, it follows from Lemma~\ref{lem:upperextension} that 
	\[
	\phi_{J^\psi}(f)=\lim_{N\to\infty}\phi_{J^\psi}(f\vee(-N))=\lim_{N\to\infty}\psi(f\vee(-N))=\psi(f).\qedhere
	\]
\end{proof}

\begin{remark}\label{rem:events}
A  weakly maxitive non-linear expectation with the translation property is fully determined on increasing continuous bounded functions by its restriction to the indicators $-\infty1_{A^c}$ for $A\in \mathcal{OC}^{\uparrow}$. Indeed, let $\psi_1,\psi_2\colon\overline{LU}^{\uparrow}\to [-\infty,\infty)$ be weakly maxitive non-linear expectations with the translation property. If $\psi_1(-\infty 1_{A^c})=\psi_2(-\infty 1_{A^c})$ for all $A\in \mathcal{OC}^{\uparrow}$, then Theorem~\ref{thm:maxitiveRep} implies
$\psi_1(f)=\psi_2(f)$ for all $f\in \overline{L}^{\uparrow}\cap \overline{U}^{\uparrow}$. 
\end{remark}


\section{A Laplace principle for maxitive integrals}\label{sec:monLDP} 
Theorem~\ref{thm:maxitiveRep} allows to represent weakly maxitive non-linear expectations with the translation property in terms of a maxitive integral $\phi_J$. 
In order to find a computable representation of the minimal rate function $I_{\rm min}$, we aim to find  conditions which guarantee that $I_{\rm min}$ is attained on certain spaces of bounded functions.  More specifically, under an additional separation property, we focus on representations of the form 
	\[
	I_{\rm min}(x)=\sup_{f\in C_b^{\uparrow}}\{f(x)-\phi_J(f)\}.
	\]
	We also investigate the relation between the bounds \eqref{mot:bounds} and a Laplace principle for general concentrations. Throughout this section, let $(E,\le)$ be a topological preordered space.
	We fix a base $\mathcal{U}$ for the topology of $E$, and set $\mathcal{U}_x:=\{U\in\mathcal{U}\colon x\in U\}$ for all $x\in E$. 
	Additionally to Assumption~\ref{ass1}, we require the following separation property.
\begin{assumption}\label{ass:sep}
	For every $A\in \mathcal{C}^{\uparrow}$ and $x\notin A$, there exists an increasing continuous function $f\colon E\to [0,1]$ which satisfies
	\[
	f(x)=0\quad\mbox{ and }\quad A\subset f^{-1}(1).
	\]
	Similarly, for every $A\in \mathcal{C}^{\downarrow}$ and $x\notin A$, there exists an increasing continuous function $f\colon E\to [0,1]$ such that
	$A\subset f^{-1}(0)$ and $f(x)=1$.
\end{assumption}

	\begin{remark}\label{rem:ass:top}
		If the preorder $\le$ is trivial, then the previous assumption corresponds to that of complete regularity.  
		Both metric spaces, and Hausdorff topological abelian groups are completely regular topological spaces and thus satisfy Assumption~\ref{ass:sep} for the trivial preorder. 
		Moreover, we show in Theorem~\ref{thm:separation} in Appendix A that every  preordered Hausdorff topological abelian group $(E,\le)$ as in Remark~\ref{rem:preorderedGroup}  satisfies Assumption~\ref{ass1} and Assuption~\ref{ass:sep}.      
	\end{remark}

Under Assumption \ref{ass:sep}, the following holds. 
\begin{lemma}\label{lem:inclusion}
	For any $O\in\mathcal{O}^{\uparrow}$ and $x\in O$, there exists $U\in\mathcal{U}_x$ such that ${\rm cl}({\uparrow}U)\subset O$.
\end{lemma}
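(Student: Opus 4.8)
The plan is to reduce the statement to a single application of the separation Assumption~\ref{ass:sep}, applied not to $O$ itself but to its complement. Fix $O\in\mathcal{O}^{\uparrow}$ and $x\in O$. By Lemma~\ref{lem:basictop}~(i) the complement satisfies $O^c\in\mathcal{C}^{\downarrow}$, and since $x\in O$ we have $x\notin O^c$. Thus the second (``downwards'') part of Assumption~\ref{ass:sep} applies with $A=O^c$, and produces an increasing continuous function $f\colon E\to[0,1]$ with $O^c\subset f^{-1}(0)$ and $f(x)=1$.

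First I would interpose the level sets of $f$ between $x$ and $O$. Consider $V:=\{f>\tfrac12\}$ and $C:=\{f\ge\tfrac12\}$. Since $f$ is continuous, $V$ is open and $C$ is closed; since $f$ is increasing, $V\in\mathcal{O}^{\uparrow}$ and $C\in\mathcal{C}^{\uparrow}$ by Lemma~\ref{lem:basictop}~(iii),(iv), so in particular both are upwards closed. Because $f$ vanishes on $O^c$, any point $y\in C$ has $f(y)>0$ and hence $y\notin O^c$, i.e.\ $y\in O$; therefore $x\in V\subset C\subset O$.

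Next, using that $\mathcal{U}$ is a base of the topology and $V$ is an open neighbourhood of $x$, I would select $U\in\mathcal{U}_x$ with $U\subset V$. As $V$ is upwards closed, ${\uparrow}U\subset{\uparrow}V=V$, and passing to closures gives ${\rm cl}({\uparrow}U)\subset{\rm cl}(V)\subset C\subset O$, where the middle inclusion holds because $C$ is closed and contains $V$. This is precisely the asserted inclusion ${\rm cl}({\uparrow}U)\subset O$.

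The argument is essentially immediate once the setup is in place, so I do not expect a deep obstacle; the only points requiring care are invoking the correct half of Assumption~\ref{ass:sep} (the version for $\mathcal{C}^{\downarrow}$, applied to the downset $O^c$ rather than to $O$ or to a closed upset) and observing that the strict sublevel-type set $V=\{f>\tfrac12\}$ is itself upwards closed, which is exactly what allows the enlargement from $U$ to ${\uparrow}U$ to remain inside $V$. The threshold $\tfrac12$ plays no special role: any value in $(0,1)$ separates the closed level set $C\subset O$ from $O^c$.
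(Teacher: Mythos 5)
Your proof is correct and follows essentially the same route as the paper: apply the $\mathcal{C}^{\downarrow}$ half of Assumption~\ref{ass:sep} to $O^c$, pick $U\in\mathcal{U}_x$ inside the upwards closed open set $\{f>\tfrac12\}$, and conclude via ${\rm cl}({\uparrow}U)\subset\{f\ge\tfrac12\}\subset O$. The only difference is that you spell out explicitly why $\{f\ge\tfrac12\}\subset O$, which the paper leaves implicit.
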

\begin{proof}
	Fix $O\in\mathcal{O}^{\uparrow}$ and $x\in O$. Due to Lemma \ref{lem:basictop}, we have that $O^c\in \mathcal{C}^{\downarrow}$. 
	By Assumption \ref{ass:sep}, there exists an increasing continuous function $f\colon E\to [0,1]$ such that $O^c\subset f^{-1}(0)$ and $f(x)=1$. 
	Since $x\in \{f>1/2\}$, there exists $U\in\mathcal{U}_x$, which satisfies $U\subset\{f>1/2\}$. 
	Then, since $\{f>1/2\}$ is upwards closed, we have ${\uparrow}U\subset \{f>1/2\}$. Hence, 
	$x\in {\uparrow}U\subset {\rm cl}({\uparrow}U)\subset \{f\ge 1/2\}\subset O$.
	The proof is complete. 
\end{proof}
The previous result allows for a better description of the minimal rate function. 
\begin{lemma}\label{lem:ratefunction} 
	Let $J$ be a concentration function with minimal rate function $I_{\rm min}$. Then,  for every $x\in E$, 
	\[
	I_{\rm min}(x)=\sup_{f\in C_b^{\uparrow}}\{f(x)-\phi_J(f)\}=
	-\inf_{U\in \mathcal{U}_x} J_{{\uparrow}U}=-\inf_{U\in \mathcal{U}_x} J_{{\rm cl}({\uparrow}U)}.
	\]
\end{lemma}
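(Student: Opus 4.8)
The plan is to establish the chain of equalities
\[
I_{\rm min}(x)=\sup_{f\in C_b^{\uparrow}}\{f(x)-\phi_J(f)\}=-\inf_{U\in \mathcal{U}_x} J_{{\uparrow}U}=-\inf_{U\in \mathcal{U}_x} J_{{\rm cl}({\uparrow}U)}
\]
by proving three separate identities, two of which follow from results already in hand and one of which is the genuinely new content requiring Assumption~\ref{ass:sep}. Throughout I fix $x\in E$ and recall from Lemma~\ref{lem:ratefunction1} that $-I_{\rm min}(x)=\inf_{U\in\mathcal{U}_x}J_{{\uparrow}U}$, which immediately gives the middle equality $I_{\rm min}(x)=-\inf_{U\in\mathcal{U}_x}J_{{\uparrow}U}$ for free.

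For the first equality, I would show $\sup_{f\in C_b^{\uparrow}}\{f(x)-\phi_J(f)\}=I_{\rm min}(x)$ by two inequalities. Since $C_b^\uparrow\subset L^\uparrow$ (every increasing bounded continuous function is in particular increasing and lower semicontinuous), the supremum over the smaller class $C_b^\uparrow$ is at most the supremum over $L^\uparrow$, which by definition is $I_{\rm min}(x)$; this is the easy direction. The hard direction is to show that restricting to continuous bounded functions does not lose anything, i.e.\ $\sup_{f\in C_b^{\uparrow}}\{f(x)-\phi_J(f)\}\ge -\inf_{U\in\mathcal{U}_x}J_{{\uparrow}U}$. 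For this I would fix $U\in\mathcal{U}_x$ and use Lemma~\ref{lem:inclusion} together with Assumption~\ref{ass:sep} to produce, for the upset ${\uparrow}U\in\mathcal{O}^\uparrow$ (open and upwards closed by Lemma~\ref{lem:basictop}(v)), an increasing continuous function separating $x$ from the complement. Concretely, applying the separation property to the downward-closed closed set $({\uparrow}U)^c$ and the point $x\in{\uparrow}U$ yields an increasing continuous $g\colon E\to[0,1]$ with $g(x)=1$ and $g=0$ on $({\uparrow}U)^c$. Scaling $f:=(r+1)g-1$ for large $r>0$ gives $f\in C_b^\uparrow$ with $f(x)=r$ and $f\le -1$ off ${\uparrow}U$, so that by monotonicity and the translation property of $\phi_J$ together with Proposition~\ref{prop:concentration}, $\phi_J(f)\le r\vee(\ldots)$ can be bounded in terms of $J_{{\uparrow}U}$; letting $r\to\infty$ and optimizing over $U$ recovers the desired lower bound. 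This separation step is the main obstacle, since it is exactly where Assumption~\ref{ass:sep} is needed to convert the topological data encoded in $J_{{\uparrow}U}$ into an honest continuous test function.

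For the last equality $-\inf_{U\in\mathcal{U}_x}J_{{\uparrow}U}=-\inf_{U\in\mathcal{U}_x}J_{{\rm cl}({\uparrow}U)}$, I would argue by a sandwiching of the two infima. One inequality is automatic from monotonicity of $J$: since ${\uparrow}U\subset{\rm cl}({\uparrow}U)$ we have $J_{{\uparrow}U}\le J_{{\rm cl}({\uparrow}U)}$, whence $\inf_U J_{{\uparrow}U}\le\inf_U J_{{\rm cl}({\uparrow}U)}$. For the reverse inequality I would invoke Lemma~\ref{lem:inclusion}: given $U\in\mathcal{U}_x$, there is $V\in\mathcal{U}_x$ with ${\rm cl}({\uparrow}V)\subset{\uparrow}U$, so again by monotonicity of $J$ we get $J_{{\rm cl}({\uparrow}V)}\le J_{{\uparrow}U}$. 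Taking the infimum over $U$ on the right and noting $V$ ranges over a cofinal subfamily of $\mathcal{U}_x$ yields $\inf_U J_{{\rm cl}({\uparrow}U)}\le\inf_U J_{{\uparrow}U}$. Note that the closures ${\rm cl}({\uparrow}U)$ are upwards closed by Lemma~\ref{lem:basictop}(vi), hence lie in $\mathcal{C}^\uparrow$ and are legitimate arguments of $J$. Combining the three identities completes the proof.
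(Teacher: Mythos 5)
Your proposal is correct and takes essentially the same route as the paper's proof: the middle equality via Lemma~\ref{lem:ratefunction1}, the closure equality via Lemma~\ref{lem:inclusion} and monotonicity of $J$, the easy inequality from $C_b^{\uparrow}\subset L^{\uparrow}$, and the hard direction by using Assumption~\ref{ass:sep} to build increasing continuous functions separating $x$ from $({\uparrow}U)^c$, then passing to the limit in the scaling parameter and invoking Proposition~\ref{prop:concentration}. Your normalization (value $r$ at $x$, value $-1$ off ${\uparrow}U$, $r\to\infty$) differs from the paper's (value $0$ at $x$, value $r$ off ${\uparrow}U$, $r\to-\infty$) only by a translation, which is immaterial by the translation property of $\phi_J$.
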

\begin{proof}
	Let $x\in E$. By Lemma~\ref{lem:ratefunction1} and Lemma~\ref{lem:inclusion}, 
	\[
	\sup_{f\in C_b^{\uparrow}}\{f(x)-\phi_J(f)\}\le I_{\rm min}(x)=
	-\inf_{U\in \mathcal{U}_x} J_{{\uparrow}U}=-\inf_{U\in \mathcal{U}_x} J_{{\rm cl}({\uparrow}U)}.
	\]
	Recall that ${\rm cl}({\uparrow}U)\in  \mathcal{C}^\uparrow$ due to Lemma~\ref{lem:basictop}. Let $V\in\mathcal{U}_x$ and $r<0$.  It follows from Lemma~\ref{lem:basictop} that $({\uparrow}V)^c\in \mathcal{C}^{\downarrow}$. Hence, due to
	Assumption~\ref{ass:sep}, there exists an increasing continuous function $f_{U,r}\colon E\to [r,0]$ with $({\uparrow}U)^c\subset f^{-1}_{U,r}(r)$ and $f_{U,r}(x)=0$.  
	Then, we have
	\[
	\sup_{f\in C_b^{\uparrow}}\{f(x)-\phi_J(f)\}\ge -\phi_J(f_{U,r})\ge -\phi_J(r 1_{({\uparrow}U)^c}).
	\]
	Letting $r\to -\infty$ and using Proposition \ref{prop:concentration}, we see that $\sup_{f\in C_b^{\uparrow}}\{f(x)-\phi_J(f)\}\ge -J_{{\uparrow}U}$.
	If we take the supremum over all $U\in\mathcal{U}_x$, we conclude
	\[
	-\inf_{U\in \mathcal{U}_x} J_{{\uparrow}U} \le \sup_{f\in C_b^{\uparrow}}\{f(x)-\phi_J(f)\}.\qedhere
	\] 
\end{proof}
Motivated by the theory of large deviations and in accordance with the inequalities~\eqref{mot:bounds}, we introduce the following concepts. 
\begin{definition} 
	Let $I\colon E\to[0,\infty]$ be a function. A concentration $J$ is said to satisfy the \emph{monotone large deviation principle} (mLDP) with rate function $I$ if 
	\begin{equation}\label{eq:LDP}
	-\inf_{x\in O}I(x) \le J_{O}\quad \mbox{and}\quad
	J_C\le -\inf_{x\in C}I(x)
	\end{equation}
	for every $O\in\mathcal{O}^\uparrow$ and all $C\in\mathcal{C}^\uparrow$. Moreover, a concentration $J$ is said to satisfy the \emph{monotone Laplace principle} (mLP) with rate function $I$ if the maxitive integral $\phi_J$ has the representation  
	\begin{equation}
		\label{eq:LP}
		\phi_J(f)=\sup_{x\in E}\{f(x)-I(x)\}\quad\mbox{ for all }f\in C^\uparrow_b.
	\end{equation}
\end{definition}	
The mLDP is equivalent to the bounds~\eqref{eq:LDPL} and~\eqref{eq:LDPU}. On the other hand, while the bounds~\eqref{eq:L1} and \eqref{eq:U1} imply the mLP, the converse assertion is not necessarily true.
For a concentration $J$ which satisfies either the mLDP or mLP with rate function $I$, it follows from $J_E=0$ or equivalently $\phi_J(0)=0$, that 
$I$ is proper, i.e., $I(x)\in[0,\infty)$ for some $x\in E$.  We do not require that rate functions are lower semicontinuous.\footnote{In the theory of large deviations, it is typically assumed that a rate function is lower semicontinuous and proper; see e.g. \cite{dembo}.} However,
we show that the mLDP uniquely determines the rate function within the class of increasing lower semicontinuous functions. 
For a function $f\colon E\to[-\infty,\infty]$, we define its \emph{increasing lower semicontinuous envelope}  $f^\uparrow\colon E\to[-\infty,\infty]$ by
\[
f^{\uparrow}(x):=\sup \big\{g(x)\colon g\in L^\uparrow,\, g\le f\big\}. 
\]
Directly from the definition, we see that $f^{\uparrow}$ is the greatest increasing lower semicontinuous function $g\colon E\to[-\infty,\infty]$  which satisfies that $g\le f$.

\begin{proposition}\label{prop:uniquenessrate}
	Let $J$ be a concentration and $I\colon E\to[0,\infty]$ be a function.  
	\begin{itemize}
		\item[(i)]  If $J_{C}\le -\inf_{x\in C} {I}(x)$ for every $C\in\mathcal{C}^\uparrow$, then  $ I^{\uparrow}\le I_{\rm min}$.
		\item[(ii)] If $-\inf_{x\in O}{I}(x)\le J_{O}$ for every $O\in\mathcal{O}^\uparrow$, then $I_{\rm min}\le I^{\uparrow}$. 
	\end{itemize}
	In particular, if $J$ satisfies the mLDP with rate function $I$ being increasing and lower semicontinuous, then $I=I_{\rm min}$.    
\end{proposition}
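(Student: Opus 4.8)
The plan is to treat the two inclusions separately: (ii) comes quickly from Theorem~\ref{thm:mVaradhan1} and the minimality of $I_{\rm min}$, while (i) requires a genuine approximation-from-inside argument that converts the closed-set upper bound into a bound on $I_{\rm min}$. The final equality is then immediate: under the mLDP both hypotheses hold, so (i) and (ii) give $I^\uparrow\le I_{\rm min}\le I^\uparrow$, and $I^\uparrow=I$ because $I$ is already increasing and lower semicontinuous, whence $I=I_{\rm min}$.

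For (ii), I would observe that the hypothesis is precisely the lower bound \eqref{eq:LDPL}, which by Theorem~\ref{thm:mVaradhan1} is equivalent to \eqref{eq:L1}, i.e.\ $\phi_J(f)\ge\sup_{y\in E}\{f(y)-I(y)\}$ for all $f\in L^\uparrow$. Specialising the supremum to a single point $x$ gives $\phi_J(f)\ge f(x)-I(x)$, hence $I(x)\ge f(x)-\phi_J(f)$ for every $f\in L^\uparrow$; taking the supremum over $f$ yields $I\ge I_{\rm min}$ pointwise (this is exactly the minimality of $I_{\rm min}$ recorded after Theorem~\ref{thm:mVaradhan1}). Since $I_{\rm min}$ is increasing and lower semicontinuous and now lies below $I$, the characterisation of $I^\uparrow$ as the greatest increasing lower semicontinuous minorant of $I$ gives $I_{\rm min}\le I^\uparrow$.

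For (i), the naive idea of inserting $I$ or $I^\uparrow$ into the functional representation of $I_{\rm min}$ fails, because the upper bound is available only on closed upsets and passing to closures can strictly decrease infima of lower semicontinuous functions. Instead I would argue pointwise. Fix $x\in E$ and $a<I^\uparrow(x)$; by definition of the envelope there is $g\in L^\uparrow$ with $g\le I$ and $g(x)>a$. Then $O:=\{g>a\}$ belongs to $\mathcal{O}^\uparrow$ by Lemma~\ref{lem:basictop}, contains $x$, and satisfies $I(y)\ge g(y)>a$ for all $y\in O$, so $\inf_{O}I\ge a$. Now Lemma~\ref{lem:inclusion} (this is where Assumption~\ref{ass:sep} enters) provides $U\in\mathcal{U}_x$ with ${\rm cl}({\uparrow}U)\subset O$, and ${\rm cl}({\uparrow}U)\in\mathcal{C}^\uparrow$ again by Lemma~\ref{lem:basictop}. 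Applying the hypothesis \eqref{eq:LDPU} on this closed upset gives
\[
J_{{\rm cl}({\uparrow}U)}\le-\inf_{{\rm cl}({\uparrow}U)}I\le-\inf_{O}I\le-a,
\]
and the representation $I_{\rm min}(x)=-\inf_{V\in\mathcal{U}_x}J_{{\rm cl}({\uparrow}V)}$ from Lemma~\ref{lem:ratefunction} then yields $I_{\rm min}(x)\ge-J_{{\rm cl}({\uparrow}U)}\ge a$. Since $a<I^\uparrow(x)$ was arbitrary, $I_{\rm min}(x)\ge I^\uparrow(x)$.

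The main obstacle is exactly the mismatch in (i) between the closedness demanded by the upper bound and the openness of the sublevel sets $\{g>a\}$ that naturally carry the information $I>a$. The decisive device is Lemma~\ref{lem:inclusion}, which squeezes a closed upset ${\rm cl}({\uparrow}U)$ between $x$ and the open set $O$; this is the only step that uses the separation Assumption~\ref{ass:sep}, and without it one could only control compactly generated closed sets through Proposition~\ref{prop:LDPupper} rather than recover $I^\uparrow$ on the nose. Everything else is routine bookkeeping with the monotonicity of $J$ and the order/topology compatibility supplied by Lemma~\ref{lem:basictop}.
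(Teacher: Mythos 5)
Your proof is correct and follows essentially the same route as the paper: part (i) hinges, exactly as in the paper's argument, on squeezing a closed upset ${\rm cl}({\uparrow}U)$ inside an open upset on which $I$ exceeds the level $a$ via Lemma~\ref{lem:inclusion}, and then invoking the representation $I_{\rm min}(x)=-\inf_{U\in\mathcal{U}_x}J_{{\rm cl}({\uparrow}U)}$ from Lemma~\ref{lem:ratefunction}. The only cosmetic deviation is in part (ii), where you route through Theorem~\ref{thm:mVaradhan1} and the minimality of $I_{\rm min}$ rather than the paper's direct estimate $I_{\rm min}(x)=\sup_{U\in\mathcal{U}_x}(-J_{{\uparrow}U})\le \sup_{U\in\mathcal{U}_x}\inf_{y\in{\uparrow}U}I(y)\le I(x)$ from Lemma~\ref{lem:ratefunction}; both are short arguments with the same content.
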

\begin{proof}
	First, we assume that $J_{C}\le -\inf_{x\in C} I(x)$ for all $C\in\mathcal{C}^\uparrow$.  
	Since $I^\uparrow\le I$, 
	\[J_C\le  -\inf_{x\in C}I(x)\le -\inf_{x\in C}I^\uparrow(x)\quad\mbox{ for all }C\in\mathcal{C}^\uparrow.\]
	Let $x\in E$.  We apply the previous inequality and the fact that $I^\uparrow$ is lower semicontinuous and increasing to show that
	\[I^\uparrow(x)=\underset{U\in\mathcal{U}_x}\sup\underset{y\in U}\inf I^\uparrow(y)
	=\underset{U\in\mathcal{U}_x}\sup\underset{y\in {\uparrow}U}\inf I^\uparrow(y)
	=\underset{U\in\mathcal{U}_x}\sup \underset{y\in {\rm cl}({\uparrow}U)}\inf I^\uparrow(y)\le 
	-\underset{U\in\mathcal{U}_x}\inf J_{{\rm cl}({\uparrow}U)}=I_{\rm min}(x),\]
	where in the first equality we used that $I^\uparrow$ is lower semicontinuous,
	in the second equality that $I^\uparrow$ is increasing, the third equality is a consequence of Lemma~\ref{lem:inclusion},
	and the last equality follows from Lemma~\ref{lem:ratefunction}. 
	
	Second, we assume that $-\inf_{x\in O}{I}(x)\le J_{O}$ for every $O\in\mathcal{O}^\uparrow$. Then, it follows from
	Lemma~\ref{lem:ratefunction} that for every $x\in E$, 
	\[
	I_{\rm min}(x)=-\underset{U\in \mathcal{U}_x}\inf J_{{\uparrow}U}=\underset{U\in \mathcal{U}_x}\sup (-J_{{\uparrow}U})\le \underset{U\in \mathcal{U}_x}\sup\inf_{y\in {\uparrow}U}I(y)\le I(x),
	\] 
	where we have used that ${\uparrow}U\in \mathcal{O}^\uparrow$. 
	This shows $\I\le I$. Moreover, since $\I$ is increasing and lower semicontinuous, it follows that $\I\le I^\uparrow$. 
	
	In particular, if $J$ satisfies the mLDP with an increasing lower semicontinuous rate function $I$, then $I=I^\uparrow$, so that conditions (i) and (ii) imply that 
	$\I\le I\le \I$. 
\end{proof}	
It follows from Theorem \ref{thm:mVaradhan1} that the mLDP implies the mLP.
The converse implication is more involved. As shown in the following main result, if the space is monotonically normal or the rate function has downwards compactly generated sublevel sets, then the mLP implies the mLDP.
\begin{theorem}\label{thm:mLPimpliesLDP}
	Let $I\colon E\to [0,\infty]$ be an increasing  lower semicontinuous function, and suppose that one of the following conditions is satisfied:
	\begin{enumerate}
		\item $E$ is monotonically normal,\footnote{This notion is due to Nachbin~\cite{nachbin1965topology} where the term `normally ordered space' is used in the context of an ordered set. 
			The term `monotonically normal' is also used in the literature for preordered sets; cf.~e.g.,~\cite{bonsall,semadeni1968preordered}.} if for every $A\in\mathcal{C}^{\downarrow}$ and $B\in\mathcal{C}^{\uparrow}$ with $A\cap B=\emptyset$ there exist $U\in \mathcal{O}^\downarrow$ and $V\in \mathcal{O}^\uparrow$ with $U\cap V=\emptyset$ such that $A\subset U$ and $B\subset V$.
		\item $I$ has downwards compactly generated sublevel sets, i.e., if for every $\alpha\in\R$ there exists a compact set $K_\alpha$ such that 
		$\{I\le \alpha\}={\downarrow}K_\alpha$. 
	\end{enumerate}
	Then, $J$ satisfies the mLP with rate function $I$ if and only if 
	$J$ satisfies the mLDP with rate function $I$. 
	In that case, $I=I_{\rm min}$. 
\end{theorem}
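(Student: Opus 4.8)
The plan is to prove the substantive implication, mLP $\Rightarrow$ mLDP; the reverse is immediate from Theorem~\ref{thm:mVaradhan1} because $C_b^\uparrow\subset L^\uparrow\cap U^\uparrow$, and once the mLDP holds with an increasing lower semicontinuous rate function, Proposition~\ref{prop:uniquenessrate}(i)--(ii) forces $I=I^\uparrow=\I$. So throughout I assume the mLP, that is, $\phi_J(f)=\sup_{y\in E}\{f(y)-I(y)\}$ for every $f\in C_b^\uparrow$.

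First I would establish the lower bound \eqref{eq:LDPL}. Evaluating the mLP identity at the point $y=x$ gives $\phi_J(f)\ge f(x)-I(x)$, hence $f(x)-\phi_J(f)\le I(x)$ for all $f\in C_b^\uparrow$; taking the supremum and using Lemma~\ref{lem:ratefunction} yields $\I(x)\le I(x)$ for every $x\in E$. Now fix $O\in\mathcal{O}^\uparrow$ and $x\in O$, and choose $U\in\mathcal{U}_x$ with $U\subset O$. Since $O$ is upwards closed, ${\uparrow}U\subset O$, so monotonicity of $J$ together with Lemma~\ref{lem:ratefunction} gives $J_O\ge J_{{\uparrow}U}\ge\inf_{V\in\mathcal{U}_x}J_{{\uparrow}V}=-\I(x)\ge-I(x)$. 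Taking the supremum over $x\in O$ proves $-\inf_{x\in O}I(x)\le J_O$.

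The core of the proof is the upper bound \eqref{eq:LDPU}. Fix $C\in\mathcal{C}^\uparrow$ and put $\alpha:=\inf_{x\in C}I(x)$; the case $\alpha=0$ is trivial since $J\le 0$, so I fix $\beta\in(0,\alpha)$ and aim at $J_C\le-\beta$. Because $I$ is increasing and lower semicontinuous, $\{I\le\beta\}\in\mathcal{C}^\downarrow$, and it is disjoint from $C$. The decisive step is to construct an increasing continuous $f\colon E\to[0,1]$ with $f\equiv 1$ on $C$ and $f\equiv 0$ on $\{I\le\beta\}$. Granting this, set $h_M:=M(f-1)\in C_b^\uparrow$ with $M\ge\beta$; splitting the mLP representation over $\{I\le\beta\}$ (where $h_M=-M$, so $h_M-I\le-M$) and $\{I>\beta\}$ (where $h_M\le 0$ and $-I<-\beta$) gives $\phi_J(h_M)=\sup_y\{h_M(y)-I(y)\}\le(-M)\vee(-\beta)=-\beta$. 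Since $-\infty 1_{C^c}\le h_M$, monotonicity of $\phi_J$ and Proposition~\ref{prop:concentration} yield $J_C=\phi_J(-\infty 1_{C^c})\le\phi_J(h_M)\le-\beta$; letting $\beta\uparrow\alpha$ gives $J_C\le-\alpha=-\inf_{x\in C}I(x)$.

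It remains to build the separating function $f$, and this is where the two hypotheses enter, so I expect it to be the main obstacle. If $E$ is monotonically normal, $f$ is the increasing continuous Urysohn function for the disjoint pair $\{I\le\beta\}\in\mathcal{C}^\downarrow$ and $C\in\mathcal{C}^\uparrow$, which monotone normality provides through Nachbin's order-theoretic version of Urysohn's lemma (cf.~\cite{nachbin1965topology}). If instead $I$ has downwards compactly generated sublevel sets, I would write $\{I\le\beta\}={\downarrow}K$ with $K$ compact; for each $x\in K\subset C^c$, Assumption~\ref{ass:sep} supplies an increasing continuous $g_x\colon E\to[0,1]$ with $g_x(x)=0$ and $g_x\equiv 1$ on $C$, and compactness yields $x_1,\dots,x_n$ with $K\subset\bigcup_{i=1}^n\{g_{x_i}<1/2\}$. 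Then $g:=\wedge_{i=1}^n g_{x_i}$ is increasing, continuous, equal to $1$ on $C$, and strictly below $1/2$ on $K$, hence on ${\downarrow}K$ since $g$ is increasing; so $f:=(2g-1)\vee 0$ has the required properties. Having verified both inequalities in \eqref{eq:LDP} with the increasing lower semicontinuous $I$, Proposition~\ref{prop:uniquenessrate} finally gives $I=\I$, completing the equivalence.
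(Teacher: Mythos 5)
Your proposal is correct and follows essentially the same route as the paper's proof: the same invocation of Nachbin's separation theorem in the monotonically normal case, the same Assumption~\ref{ass:sep}-plus-compactness construction of the separating function when the sublevel sets are downwards compactly generated, the same insertion of the resulting bounded increasing continuous function into the mLP representation to get $J_C\le -\beta$, and the same use of Lemma~\ref{lem:ratefunction} and Proposition~\ref{prop:uniquenessrate} to conclude $I=I_{\rm min}$. The only differences are cosmetic: you approximate $\inf_{x\in C}I(x)$ from below by $\beta$ where the paper truncates via $I^\delta=(I-\delta)\wedge\delta^{-1}$, and you verify the lower bound directly at the outset (implicitly using that $\{I\le\beta\}\neq\emptyset$, which as in the paper follows from $\phi_J(0)=0$) rather than deducing it at the end.
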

\begin{proof}  
	If $J$ satisfies the mLDP with increasing rate function $I$, then it follows from Theorem \ref{thm:mVaradhan1} that $J$ satisfies the mLP with rate function $I$.
	Moreover, Proposition~\ref{prop:uniquenessrate} ensures that  $I=I_{\rm min}$. 
	
	Conversely, suppose that the mLP with rate function $I$ holds. Fix $A\in\mathcal{C}^\uparrow$.   
	We first show that $J_A\le-\inf_{x\in A} I(x)$.  
	If $\inf_{x\in A} I(x)=0$, then the assertion trivially holds as $J_A\le 0$.  
	Thus, assume that $\inf_{x\in A} I(x)>\delta$ for some $\delta>0$ small enough.  Define \[I^\delta(x):=(I(x)-\delta)\wedge \delta^{-1}\quad\mbox{for }x\in E.\]  
	Next, we show that
	\begin{equation}
		\label{eq:infalpha}
		J_{A}\le -\alpha,
	\end{equation}
	where $\alpha:=\inf_{x\in A}I^\delta(x)\in(0,\infty)$. 
	Since $-\inf_{x\in E} I(x)=\phi_J(0)=0$, it follows 
	that $\{I\le \alpha\}$ is non-empty. 
	Moreover, $\{I\le \alpha\}\in \mathcal{C}^\downarrow$ as $I$ is increasing and lower semicontinuous. 
	In addition, it holds $\{I\le\alpha\}\cap A=\emptyset$. 
	
	Suppose first that $E$ is monotonically normal.  It follows from \cite[Theorem 2]{nachbin1965topology} that for each $m\in\mathbb{N}$, there exists an increasing continuous function $h_m\colon E\to [-m,0]$ such that $\{I\le \alpha\}\subset h_m^{-1}(-m)$ and $A\subset h_m^{-1}(0)$.\footnote{ \cite[Theorem 2]{nachbin1965topology} is formulated for ordered topological spaces rather than preordered topological spaces. 
		However, since in the proof the antisymmetry is not used, the result applies also to preordered topological spaces.} 
	For each $m\in\N$, 
	\[
	J_{A}=\inf_{r\in\mathbb{R}}\phi_J(r 1_{A^c})\le \phi_J(h_m)=-\inf_{x\in S}\{-h_m(x)+I(x)\}.
	\]
	Since $-h_m(x)+I(x)\ge m$ if $x\in \{I\le \alpha\}$, and $-h_m(x)+I(x)\ge \alpha$ if $x\notin \{I\le \alpha\}$, by choosing  $m\ge \alpha$, we obtain inequality~\eqref{eq:infalpha}.
	
	Suppose now that $I$ has downwards compactly generated sublevel sets. 
	In that case, $\{I\le \alpha\}={\downarrow}K_\alpha$ for a compact set $K_\alpha\subset E$. 
	Since $A\cap K_\alpha=\emptyset$, by Assumption \ref{ass:sep}, for every $y\in K_\alpha$ there exists an increasing continuous function $f_y\colon E\to [-1,0]$ such that $f_y(y)=-1$ and $A\subset f_y^{-1}(0)$. 
	The neighborhoods $V_y:=\{f_y<-\tfrac{1}{2}\}$ cover the compact set $K_\alpha$. 
	Hence, we can find $y_1,\ldots,y_N\in K_\alpha$ such that $K_\alpha\subset \bigcup_{1\le i\le N} V_{y_i}$. 
	For each $m\in\N$, we define $h_m:=\wedge_{1\le i\le N} 2 m f_{y_i}$.  
	Then, $h_m$ is increasing, continuous and bounded, $A\subset h_m^{-1}(0)$ and $h_m(x)\le -m$ for all $x\in K_\alpha$.    
	Moreover, since $h_m$ is increasing, it follows that $h_m(x)\le -m$ for all $x\in \{I\le \alpha\}={\downarrow}K_\alpha$. 
	Finally, with the same arguments as in the previous case,  we obtain inequality~\eqref{eq:infalpha}.  
	
	In both cases, inequality~\eqref{eq:infalpha} holds for all $\delta>0$ small enough, so that 
	\[J_A\le-\lim_{\delta\downarrow 0}\inf_{x\in A} I^\delta(x)
	=-\lim_{\delta\downarrow 0}\Big((\inf_{x\in A} I(x) - \delta)\wedge \delta^{-1}\Big)
	=-\inf_{x\in A} I(x).\] 
	Hence, $J$ satisfies the upper bound of the mLDP with increasing rate function $I$.
	By Proposition \ref{prop:uniquenessrate}, we obtain $I_{\rm min}\ge I$. On the other hand,
	it follows from the representation \eqref{eq:LP} that $I_{\rm min}\le I$, and therefore $I=I_{\rm min}$. 
	Finally, it follows from Theorem~\ref{thm:mVaradhan1} that $J$ satisfies the lower bound of the mLDP
	with rate function $I=I_{\rm min}$. 
\end{proof}

	\begin{remark}
		If the preorder $\le$ is trivial, then the notion of monotonical normality corresponds to normality. 
		Examples of normal spaces include metrizable spaces, and regular Lindel\"{o}f spaces; see e.g. \cite{kelley}.   
		  
		In case that $\le$ is a closed preorder\footnote{I.e., $\{(x,y)\colon x\le y\}$ is closed.},  it is shown in \cite{minguzzi} that $E$ is monotonically normal whenever $E$ is second countable and locally compact. This is the case, for example, when $E$ is a (second countable)  topological manifold equipped with a closed preorder. 
		Further examples of  monotonically normal spaces can be found in \cite{bosi,minguzzi}.   
		 
		In case that $E=\mathbb{R}$ is the real line, then every increasing lower semicontinuous function $I\colon E\to[0,\infty)$ satisfies that $\{I\le \alpha\}=(-\infty,I(\alpha)]={\downarrow}\{I(\alpha)\}$, and has therefore compactly generated sublevel sets. 
	\end{remark}

\section{Convex rate functions}\label{sec:convRateFunc}
The explicit determination of the rate function is generally a difficult task.
However, in the particular case where the rate function is convex, one can rely on convex duality arguments.
Throughout this section, let $E$ be a  locally convex Hausdorff topological real vector space. Moreover, let $E_+\subset E$ be a convex cone, i.e., 
$E_+ + E_+ \subset E_+$, $\lambda E_+ \subset E_+$ for all $\lambda>0$ and $0\in E_+$.
We endow $E$ with the preorder induced by $E_+$, i.e., $x\le y$ if and only if $y-x\in E_+$. 
Then, the assumptions of Section~\ref{sec:monLDP} are satisfied as outlined in Remark~\ref{rem:ass:top}.  
Let $\mathcal{U}$ be a base for the topology of $E$, and define $\mathcal{U}_x:=\{U\in\mathcal{U}\colon x\in U\}$ for all $x\in E$. 
We first provide a condition which ensures that the minimal rate function is convex. 

\begin{lemma}\label{lem:convexRate}
	Let $J$ be a concentration such that 
	\[
	J_{\tfrac{1}{2}{\uparrow}U + \tfrac{1}{2}{\uparrow}V}\ge \tfrac{1}{2}J_{{\uparrow}U} + \tfrac{1}{2}J_{{\uparrow}V}\quad\mbox{ for all }U,V\in\mathcal{U}.
	\]
	Then, the minimal rate function $I_{\rm min}\colon E\to [0,\infty]$ is convex.
\end{lemma}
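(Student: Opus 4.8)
The plan is to establish \emph{midpoint} convexity of $I_{\rm min}$ and then upgrade it to genuine convexity using lower semicontinuity. Throughout I rely on the representation $-I_{\rm min}(x)=\inf_{U\in\mathcal{U}_x}J_{{\uparrow}U}$ from Lemma~\ref{lem:ratefunction1}, the monotonicity of $J$, and the structural facts that the preorder induced by the convex cone $E_+$ is translation invariant and compatible with scaling by positive reals (so that $a\le a'$, $b\le b'$ imply $\tfrac12 a+\tfrac12 b\le\tfrac12 a'+\tfrac12 b'$, and $c\in E_+$ implies $\lambda c\in E_+$ for $\lambda>0$).

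Fix $x,y\in E$ and put $z:=\tfrac12 x+\tfrac12 y$. The geometric heart of the argument is the identity
\[
{\uparrow}\big(\tfrac12 U+\tfrac12 V\big)=\tfrac12{\uparrow}U+\tfrac12{\uparrow}V\qquad\text{for all }U,V\subset E.
\]
The inclusion ``$\supseteq$'' is immediate: if $a\ge u\in U$ and $b\ge v\in V$ then $\tfrac12 a+\tfrac12 b\ge\tfrac12 u+\tfrac12 v\in\tfrac12 U+\tfrac12 V$. For ``$\subseteq$'', any $p\ge\tfrac12 u+\tfrac12 v$ can be written as $p=\tfrac12(u+c)+\tfrac12(v+c)$ with $c:=p-\tfrac12 u-\tfrac12 v\in E_+$, and $u+c\in{\uparrow}U$, $v+c\in{\uparrow}V$; this is precisely where the cone structure of $E_+$ enters. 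Combining the identity with the hypothesis gives, for $U\in\mathcal{U}_x$ and $V\in\mathcal{U}_y$,
\[
J_{{\uparrow}(\tfrac12 U+\tfrac12 V)}=J_{\tfrac12{\uparrow}U+\tfrac12{\uparrow}V}\ge\tfrac12 J_{{\uparrow}U}+\tfrac12 J_{{\uparrow}V}.
\]
Next I note that $\tfrac12 U+\tfrac12 V$ is an open neighborhood of $z$ and that, by continuity of $(u,v)\mapsto\tfrac12 u+\tfrac12 v$, the family $\{\tfrac12 U+\tfrac12 V:U\in\mathcal{U}_x,\ V\in\mathcal{U}_y\}$ is a neighborhood base at $z$; since $J_{{\uparrow}\,\cdot}$ is monotone this yields $-I_{\rm min}(z)=\inf_{W\in\mathcal{U}_z}J_{{\uparrow}W}=\inf_{U,V}J_{{\uparrow}(\tfrac12 U+\tfrac12 V)}$. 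Feeding in the previous bound and separating the two independent infima,
\[
-I_{\rm min}(z)\ge\inf_{U,V}\Big(\tfrac12 J_{{\uparrow}U}+\tfrac12 J_{{\uparrow}V}\Big)=\tfrac12\inf_{U\in\mathcal{U}_x}J_{{\uparrow}U}+\tfrac12\inf_{V\in\mathcal{U}_y}J_{{\uparrow}V}=-\tfrac12 I_{\rm min}(x)-\tfrac12 I_{\rm min}(y),
\]
which is exactly the midpoint inequality $I_{\rm min}(z)\le\tfrac12 I_{\rm min}(x)+\tfrac12 I_{\rm min}(y)$.

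Finally I would promote midpoint convexity to convexity. Fixing $x,y$, the function $t\mapsto g(t):=I_{\rm min}(tx+(1-t)y)$ on $[0,1]$ is lower semicontinuous (as $I_{\rm min}$ is lower semicontinuous and $t\mapsto tx+(1-t)y$ is continuous) and midpoint convex by the step above; a lower semicontinuous midpoint convex function on an interval is convex, giving $I_{\rm min}(tx+(1-t)y)\le t I_{\rm min}(x)+(1-t)I_{\rm min}(y)$ for all $t\in[0,1]$, and hence convexity of $I_{\rm min}$. The main obstacle is the reduction in the middle paragraph: one must match the hypothesis, which is phrased through the up-sets ${\uparrow}U,{\uparrow}V$, to the value $I_{\rm min}(z)$, and this hinges on both the set identity ${\uparrow}(\tfrac12 U+\tfrac12 V)=\tfrac12{\uparrow}U+\tfrac12{\uparrow}V$ and the fact that the Minkowski averages $\tfrac12 U+\tfrac12 V$ exhaust a neighborhood base of $z$; the passage from midpoint to full convexity is then routine.
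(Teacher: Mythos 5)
Your proof is correct and takes essentially the same route as the paper's: both derive the midpoint inequality $I_{\rm min}(\tfrac12 x+\tfrac12 y)\le\tfrac12 I_{\rm min}(x)+\tfrac12 I_{\rm min}(y)$ from the representation $-I_{\rm min}(x)=\inf_{U\in\mathcal{U}_x}J_{{\uparrow}U}$ of Lemma~\ref{lem:ratefunction1}, by matching neighborhoods of $z=\tfrac12 x+\tfrac12 y$ with Minkowski averages $\tfrac12 U+\tfrac12 V$ via continuity of the averaging map, and then both upgrade to full convexity through dyadic rationals and lower semicontinuity of $\lambda\mapsto I_{\rm min}(\lambda x+(1-\lambda)y)$. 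The only cosmetic differences are organizational: you phrase the midpoint step as an equality of infima over two neighborhood bases and prove the full set identity ${\uparrow}(\tfrac12 U+\tfrac12 V)=\tfrac12{\uparrow}U+\tfrac12{\uparrow}V$ (of which only the easy inclusion $\tfrac12{\uparrow}U+\tfrac12{\uparrow}V\subset{\uparrow}(\tfrac12 U+\tfrac12 V)$ plus monotonicity of $J$ is actually needed), whereas the paper runs an $\varepsilon$-truncation argument with a near-optimal $W\in\mathcal{U}_z$.
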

\begin{proof}
	Fix $x,y\in E$ and $\varepsilon>0$. 
	Set $z:=\frac{1}{2}x+\frac{1}{2}y$. 
	Due to Lemma~\ref{lem:ratefunction}, there exists $W\in\mathcal{U}_z$ such that
	\[
	-J_{{\uparrow}W} \ge (\I(z)-\varepsilon)\wedge 	\varepsilon^{-1}.
	\]
	Since $O:=\{(\tilde{x},\tilde{y})\in E\times E\colon \frac{1}{2}\tilde{x}+ \frac{1}{2}\tilde{y}\in W\}$ is open, there exist  $U\in\mathcal{U}_x$ 
	and $V\in\mathcal{U}_y$ such that $\frac{1}{2} U+\frac{1}{2} V\subset W$. 
	Then, 
	\[
	-[(\I(z)-\varepsilon)\wedge\varepsilon^{-1}]\ge J_{{\uparrow}W}\ge J_{\frac{1}{2}{\uparrow}U+\frac{1}{2}{\uparrow}V}\ge \frac{1}{2}J_{{\uparrow}U}+\frac{1}{2}J_{{\uparrow}V}.
	\]
	Taking the infimum over all $U\in\mathcal{U}_x$ and then over all $V\in \mathcal{U}_y$, 
	\[
	(\I(z)-\varepsilon)\wedge\varepsilon^{-1}\le \frac{1}{2}\I(x)+\frac{1}{2}\I(y).
	\]
	Therefore, since $\varepsilon>0$ was arbitrary, we obtain 
	\[
	\I(z)\le \frac{1}{2}\I(x)+\frac{1}{2}\I(y).
	\]
	Now, consider the set the set $\mathcal{D}:=\{k2^{-n}\colon n,k\in\mathbb{N},\: k\le 2^n\}$ of all  dyadic rational numbers in the interval $[0,1]$. 
	By recursion, it can be shown that 
	\[
	\I(z)\le \lambda\I(x)+(1-\lambda)\I(y),
	\]
	for every $\lambda\in \mathcal{D}$. As a consequence, since the map $[0,1]\to[0,\infty]$, $\lambda\mapsto \I(\lambda x+(1-\lambda) y)$ 
	is lower semicontinuous and the set $\mathcal{D}$  is dense  in $[0,1]$, the previous inequality is valid for all~$\lambda\in[0,1]$.  
\end{proof}
Let $E^\ast$ be the topological dual space of $E$.  We denote by $E^{\ast}_+$ the set of all $\mu\in E^\ast$ which are positive, i.e., $\mu(x)\ge 0$ for all $x\in E_+$. 
The convex conjugate of the minimal rate function $\I$ is defined by $ \I^\ast(\mu):=\sup_{x\in E}\{\mu(x)-\I(x)\}$ for all $\mu\in E^\ast$.

\begin{proposition}\label{prop:conI}
		Let $J$ be a concentration. Suppose that the minimal rate function $\I$ is convex and $ \I^\ast(\mu)\ge \phi_J(\mu)$ for all $\mu\in E^\ast_+$. Then,
	\begin{equation}\label{eq:repRate}
		\I(x)=\underset{\mu \in E_+^\ast}\sup\{\mu(x)-\phi_J(\mu)\}\quad\mbox{for all }x\in E.
	\end{equation}   	
\end{proposition}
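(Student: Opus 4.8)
The plan is to prove the two inequalities in \eqref{eq:repRate} separately. The inequality ``$\ge$'' follows directly from the characterisation of the minimal rate function, whereas ``$\le$'' rests on the convexity hypothesis together with the Fenchel--Moreau biconjugation theorem.

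For the lower estimate, I would first observe that every $\mu\in E^\ast_+$ is continuous and, being positive, increasing; hence $\mu\in L^\uparrow$. Since by definition $\I(x)=\sup_{f\in L^\uparrow}(f(x)-\phi_J(f))$, this immediately yields $\I(x)\ge \mu(x)-\phi_J(\mu)$ for each $\mu\in E^\ast_+$, and taking the supremum over $\mu\in E^\ast_+$ gives $\I(x)\ge \sup_{\mu\in E^\ast_+}\{\mu(x)-\phi_J(\mu)\}$.

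Before addressing the reverse inequality, I would record that $\I$ is proper: applying the standing hypothesis to the zero functional $\mu=0\in E^\ast_+$ gives $-\inf_{x\in E}\I(x)=\I^\ast(0)\ge \phi_J(0)=0$, so that $\inf_{x\in E}\I(x)=0$ and in particular $\I(x_1)<\infty$ for some $x_1\in E$. Since $\I$ is also convex (by assumption) and lower semicontinuous (as the minimal rate function), the Fenchel--Moreau theorem applies with respect to the dual pairing $(E,E^\ast)$ and yields $\I=\I^{\ast\ast}$, i.e.\ $\I(x)=\sup_{\mu\in E^\ast}\{\mu(x)-\I^\ast(\mu)\}$ for all $x\in E$.

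The key step is then to show that the effective domain of $\I^\ast$ is contained in $E^\ast_+$, so that the supremum above may be restricted to positive functionals. Indeed, if $\mu\notin E^\ast_+$, choose $x_0\in E_+$ with $\mu(x_0)<0$; since $x_1-tx_0\le x_1$ for all $t>0$, monotonicity of $\I$ gives $\I(x_1-tx_0)\le \I(x_1)<\infty$, while $\mu(x_1-tx_0)=\mu(x_1)-t\mu(x_0)\to+\infty$ as $t\to\infty$, whence $\I^\ast(\mu)=+\infty$. Consequently $\I(x)=\sup_{\mu\in E^\ast_+}\{\mu(x)-\I^\ast(\mu)\}$, and the hypothesis $\I^\ast(\mu)\ge\phi_J(\mu)$ for $\mu\in E^\ast_+$ produces $\I(x)\le \sup_{\mu\in E^\ast_+}\{\mu(x)-\phi_J(\mu)\}$. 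Combining with the lower estimate gives \eqref{eq:repRate}. I expect the only delicate point to be the verification that the hypotheses of Fenchel--Moreau are genuinely met (properness, convexity, lower semicontinuity) and that monotonicity of $\I$ confines $\mathrm{dom}\,\I^\ast$ to $E^\ast_+$; the remaining manipulations are routine.
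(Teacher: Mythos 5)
Your proof is correct and follows essentially the same route as the paper: both arguments obtain the inequality $\I(x)\ge\sup_{\mu\in E^\ast_+}\{\mu(x)-\phi_J(\mu)\}$ from the definition of $\I$ as a supremum over $L^\uparrow$ (since $E^\ast_+\subset L^\uparrow$), and the reverse inequality from Fenchel--Moreau biconjugation together with the observation that monotonicity of $\I$ forces $\I^\ast(\mu)=+\infty$ for $\mu\notin E^\ast_+$, after which the hypothesis $\I^\ast(\mu)\ge\phi_J(\mu)$ lets one replace $\I^\ast$ by $\phi_J$. Your explicit verification of properness (via the hypothesis at $\mu=0$) is a point the paper leaves implicit when invoking Fenchel--Moreau and when using a point with $\I(x)<\infty$ in the domain argument, but this is a refinement of the same proof rather than a different approach.
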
	
\begin{proof}
	The minimal rate function $I_{\rm min}$ is lower semicontinuous and increasing.
	By the Fenchel-Moreau theorem \cite[Theorem A.62]{follmer2016stochastic}, 
	\begin{align*}
		\I(x)&=\sup_{\mu\in E^\ast} \{\mu(x)-\I^\ast(\mu)\}\\
		&=\sup_{\mu\in E^\ast_+} \{\mu(x)-\I^\ast(\mu)\}
		\quad\mbox{for all }x\in E.
	\end{align*}
	The second equality holds because $\I^\ast(\mu)=\infty$ whenever $\mu\in E^\ast\setminus E^\ast_+$. 
	In fact, for $\mu\in E^\ast\setminus E^\ast_+$ there exists $y\le 0$ with $\mu(y)>0$, and therefore
	\begin{align*}
		\I^\ast(\mu)&\ge \underset{\lambda>0}\sup\{\mu(x+\lambda y)-\I(x+\lambda y)\}\\
		&\ge \underset{\lambda>0}\sup\{\mu(x)+\lambda\mu(y)-\I(x)\}=\infty. 
	\end{align*} 
	Finally, using the definition of $I_{\min}$ and the inequality  $\I^\ast(\mu)\ge \phi_{J}(\mu)$ for all $\mu\in E^\ast_+$,
	\[
	I_{\min}(x)=\sup_{f\in {L}^\uparrow}\{ f(x)-\phi_J(f)\}\ge \sup_{\mu\in E^\ast_+}\{ \mu(x)-\phi_J(\mu)\}\ge \I(x).\qedhere
	\]
\end{proof}	
The hypothesis of Proposition \ref{prop:conI} can be verified in some important situations; e.g., for the asymptotic concentration of sample means of i.i.d.~sequences, see Subsection~\ref{subsec:iid}. In particular, it is satisfied under the mLDP, which leads to the following  result.

\begin{corollary}
	Let $J$ be a concentration which satisfies the mLDP with a rate function $I\colon E\to[0,\infty]$ which is convex, increasing and lower semicontinuous.
	Then, \[I(x)=\underset{\mu \in E_+^\ast}\sup\{\mu(x)-\phi_J(\mu)\}\quad \mbox{for all }x\in E.\]
\end{corollary}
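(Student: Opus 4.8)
The plan is to obtain this corollary as an immediate application of Proposition~\ref{prop:conI}, so that the entire task reduces to checking the two hypotheses of that proposition for the concentration $J$: that the minimal rate function $\I$ is convex, and that $\I^\ast(\mu)\ge \phi_J(\mu)$ for every $\mu\in E_+^\ast$.

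First I would identify the given rate function with the minimal one. Since $J$ satisfies the mLDP with a rate function $I$ that is increasing and lower semicontinuous, the concluding assertion of Proposition~\ref{prop:uniquenessrate} yields $I=\I$. In particular, $\I$ inherits the convexity assumed for $I$, which settles the first hypothesis of Proposition~\ref{prop:conI}. (Alternatively, one could invoke Lemma~\ref{lem:convexRate}, but since $I=\I$ is already convex by assumption no extra work is needed.)

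Next I would verify the conjugate inequality. Fix $\mu\in E_+^\ast$. Being a positive continuous linear functional, $\mu$ is real-valued and continuous, hence upper semicontinuous, and it is increasing: if $x\le y$ then $y-x\in E_+$, so $\mu(y)-\mu(x)=\mu(y-x)\ge 0$. Therefore $\mu\in U^\uparrow$ and $\phi_J(\mu)$ is well-defined. The upper bound of the mLDP is exactly condition~\eqref{eq:LDPU}, which by Theorem~\ref{thm:mVaradhan1} is equivalent to~\eqref{eq:U1}. Applying~\eqref{eq:U1} to $f=\mu$ and using $I=\I$ gives
\[
\phi_J(\mu)\le \sup_{x\in E}\{\mu(x)-I(x)\}=\sup_{x\in E}\{\mu(x)-\I(x)\}=\I^\ast(\mu),
\]
which is precisely the second hypothesis of Proposition~\ref{prop:conI}.

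With both hypotheses in hand, Proposition~\ref{prop:conI} delivers $\I(x)=\sup_{\mu\in E_+^\ast}\{\mu(x)-\phi_J(\mu)\}$ for all $x\in E$, and since $I=\I$ this is the claimed representation. I do not expect a genuine obstacle here beyond the bookkeeping of the two verifications; the only point requiring a moment's care is to apply the \emph{upper} bound~\eqref{eq:U1} (valid on $U^\uparrow$, to which $\mu$ belongs) rather than the lower bound~\eqref{eq:L1}, so that the resulting inequality points in the direction $\I^\ast(\mu)\ge\phi_J(\mu)$ demanded by Proposition~\ref{prop:conI}.
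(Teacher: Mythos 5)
Your proposal is correct and follows essentially the same route as the paper: identify $I$ with $I_{\rm min}$ via Proposition~\ref{prop:uniquenessrate}, obtain the conjugate inequality from the mLDP via Theorem~\ref{thm:mVaradhan1}, and conclude with Proposition~\ref{prop:conI}. The only (immaterial) difference is that the paper records the full equality $\phi_J(\mu)=\I^\ast(\mu)$ using both mLDP bounds, whereas you observe that the single inequality $\phi_J(\mu)\le \I^\ast(\mu)$ coming from the upper bound already suffices for Proposition~\ref{prop:conI}.
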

\begin{proof}
	By Proposition~\ref{prop:uniquenessrate},  it holds $I=\I$. Moreover, it follows from Theorem~\ref{thm:mVaradhan1} that $\phi_J(\mu)=\I^\ast(\mu)$ for all $\mu\in E^\ast_+$.
	Consequently,  since $I=\I$ is convex, the claim follows from Proposition \ref{prop:conI}.
\end{proof}

\section{Examples}\label{sec:examples}
We illustrate the theoretical results with two examples.
First, we study the asymptotic behavior of a sequence $(\mu_n)_{n\in\N}$ of capacities by considering the weakly maxitive concentration 
$J_A:=\limsup_{n\to\infty}\tfrac{1}{n}\log\mu_n(A)$. Large deviations bounds for sequences of capacities on $\R^d$ were recently considered in \cite{chen2016large,tan2020large}.
Second, we focus on the role of the partial order by elaborating a 
monotone version of Cram\'{e}r's theorem for which the rate function can be determined explicitly. 

\subsection{Asymptotic concentration of capacities}\label{sec:acc}
Let $(E,\le)$ be a topological preordered space which satisfies Assumption \ref{ass1}.  
We denote by $\overline{B}$ the set of all Borel measurable functions $f\colon E\to[-\infty,\infty)$ which are bounded from above.  
In the following, let  $(\mathcal{E}_n)_{n\in\N}$ be a sequence of sublinear expectations on $\overline{B}$, i.e., for each $n\in\N$, the
functional $\mathcal{E}_n\colon \overline{B}\to [-\infty,\infty)$ satisfies
\begin{itemize}
	\item[(i)] $\mathcal{E}_n(f)\le \mathcal{E}_n(g)$ whenever $f\le g$,
	\item[(ii)] $\mathcal{E}_n(f+g)\le \mathcal{E}_n(f)+\mathcal{E}_n(g)$, 
	\item[(iii)] $\mathcal{E}_n(f+c)=\mathcal{E}_n(f)+c$ for all $c\in\R$,
	\item[(iv)] $\mathcal{E}_n(\lambda f)=\lambda\mathcal{E}_n(f)$ for all $\lambda\in[0,\infty)$.
\end{itemize}
A functional which satisfies the properties (i)-(iv) is also called upper expectation in robust statistics \cite{huber}, coherent risk measure in mathematical finance \cite{artzner1999coherent}, or upper coherent prevision in the theory of imprecise probabilities \cite{walley1991statistical}. 
We remark that the theory of upper previsions does not make any measurability assumption, and in that context it is known that the conditions (i) and (iii) follow from the conditions (ii) and (iv) together with $\mathcal{E}_n(f)\le\sup f$; see, e.g.,~\cite{miranda2,walley1991statistical}.  
\begin{example}
Let $(X_n)_{n\in\N}$ be a sequence of $E$-valued random variables defined on a probability space $(\Omega,\mathcal{F},\P)$. 
Consider a non-empty set $\mathcal{P}$ of probability measures on $\mathcal{F}$. 
Then, for each $n$, the upper prevision $\mathcal{E}_n(f)=\sup_{\mathbb{P}\in\mathcal{P}}\mathbb{E}_{\mathbb{P}}[f(X_n)]$ satisfies the properties (i)--(iv) above.
\end{example}

We consider the set function
\[
	J\colon \mathcal{OC}^{\uparrow}\to [-\infty,0],\quad J_A:=\underset{n\to\infty}{\limsup}\tfrac{1}{n}\log\mu_n(A),
\]
where $\mu_n$ denotes the corresponding capacity of $\mathcal{E}_n$, which is defined by  $\mu_n(A):=\mathcal{E}_n(1_A)$ for all Borel sets $A\subset E$. 
Straightforward verification shows that  each $\mu_n$ satisfies $\mu_n(\emptyset)=0$, $\mu_n(E)=1$, $\mu_n(A)\le \mu_n(B)$ whenever $A\subset B$, and  $\mu_n(A)\le\mu_n(B)+\mu_n(C)$ whenever $A\subset B\cup C$. Moreover, $J$ is a concentration which turns out to be weakly maxitive according to the principle of the largest term.\footnote{The \emph{principle of the largest term} is a result which is often used in  the theory of large deviations; see, e.g.,~\cite[Proposition 12.3]{petit} and \cite[Lemma 1.2.15]{dembo}. 
	Namely, if $(a_n^1)_{n\in\N},\ldots,(a_n^N)_{n\in\N}$ are $[0,\infty]$-valued sequences, then $\limsup_{n\to\infty} \tfrac{1}{n}\log\sum_{i=1}^N a_n^i
	=\vee_{i=1}^N\limsup_{n\to\infty} \tfrac{1}{n}\log a_n^i.$}
\begin{lemma}\label{lem:seqCap}
	The concentration $J$ is weakly maxitive.
\end{lemma}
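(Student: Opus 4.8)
The plan is to verify the weak maxitivity inequality directly from the definition, using the \emph{principle of the largest term} as the essential tool. Recall that weak maxitivity for $J$ requires $J_A \le \vee_{i=1}^n J_{B_i}$ whenever $A \in \mathcal{C}^\uparrow$, $B_1,\dots,B_n \in \mathcal{O}^\uparrow$, and $A \subset \cup_{i=1}^n B_i$. First I would fix such sets $A$ and $B_1,\dots,B_n$ with $A \subset \cup_{i=1}^n B_i$. The key sub-additivity property recorded in the text for each capacity $\mu_n$, namely that $\mu_n(A) \le \mu_n(B) + \mu_n(C)$ whenever $A \subset B \cup C$, extends by an immediate induction to $\mu_n(A) \le \sum_{i=1}^n \mu_n(B_i)$ whenever $A \subset \cup_{i=1}^n B_i$. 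This is the finite subadditivity that bridges the set-theoretic inclusion to the analytic estimate.

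The next step is to pass to the $\limsup$ of normalized logarithms. Applying $\tfrac{1}{n}\log$ and monotonicity, we have for each index $n$ (writing the running index of the sequence as $k$ to avoid clashing with the number $n$ of sets)
\[
\tfrac{1}{k}\log \mu_k(A) \le \tfrac{1}{k}\log\Big(\sum_{i=1}^n \mu_k(B_i)\Big).
\]
Taking $\limsup_{k\to\infty}$ on both sides and invoking the principle of the largest term with the sequences $a_k^i := \mu_k(B_i)$ yields
\[
J_A = \limsup_{k\to\infty}\tfrac{1}{k}\log\mu_k(A) \le \limsup_{k\to\infty}\tfrac{1}{k}\log\Big(\sum_{i=1}^n \mu_k(B_i)\Big) = \vee_{i=1}^n \limsup_{k\to\infty}\tfrac{1}{k}\log\mu_k(B_i) = \vee_{i=1}^n J_{B_i}.
\]
This is exactly the required inequality, and since $A$, $B_1,\dots,B_n$ were arbitrary of the stated form, it establishes that $J$ is weakly maxitive.

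I expect the proof to be essentially routine, as the machinery (finite subadditivity plus the principle of the largest term) is already assembled in the surrounding text. The only point requiring mild care is the correct definition of $J_A$ on the relevant domain: the sets $A \in \mathcal{C}^\uparrow$ and $B_i \in \mathcal{O}^\uparrow$ are in particular Borel, so $\mu_k(A)$ and $\mu_k(B_i)$ are well defined, and $J$ restricted to $\mathcal{OC}^\uparrow$ is the object whose maxitivity we are checking. One should also note that the inequality is insensitive to whether the sets lie in $\mathcal{O}^\uparrow$ or $\mathcal{C}^\uparrow$, since only monotonicity and subadditivity of each $\mu_k$ together with the inclusion $A \subset \cup_{i=1}^n B_i$ are used; the topological and order structure of the sets plays no role in this particular estimate. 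Thus the main (and only mild) obstacle is bookkeeping, not substance.
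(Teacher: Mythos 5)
Your proof is correct and follows essentially the same route as the paper: finite subadditivity of each capacity $\mu_k$ (obtained by induction from the pairwise case stated in the text) gives $\mu_k(A)\le \sum_{i=1}^n \mu_k(B_i)$, and then the principle of the largest term converts the $\limsup$ of the normalized logarithm of the sum into the maximum of the individual $\limsup$'s, which is exactly the paper's argument. The bookkeeping remarks (relabeling the running index, well-definedness of $\mu_k$ on Borel sets) are fine and do not change the substance.
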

\begin{proof}
	Let $C\in \mathcal{C}^{\uparrow}$ and $O_1,O_2,\ldots,O_N\in \mathcal{O}^{\uparrow}$ such that
	$C\subset \cup_{i=1}^N O_1$, and therefore $\mu_n(C)\le \sum_{i=1}^N\mu_n(O_i)$. Applying the principle of the largest term,
	\[
		J_C \le \limsup_{n\to\infty}\tfrac{1}{n}\log \sum_{i=1}^N\mu_{n}(O_i)\le \vee_{i=1}^N\limsup_{n\to\infty}\tfrac{1}{n}\log \mu_{n}(O_i)= \vee_{i=1}^N J_{O_i}.
		\qedhere\]
\end{proof}	
In particular, the results of Section~\ref{sec:maxInt} are applicable. Due to Lemma~\ref{lem:ratefunction1}, the minimal rate function is given by 
\[ I_{\rm min}(x)=-\sup_{U\in\mathcal{U}_x} \limsup_{n\to\infty}\tfrac{1}{n}\log \mu_n\left({\uparrow}U\right)\quad\mbox{for all }x\in E.\] 
As an application of  Theorem~\ref{thm:mVaradhan1}, the discussion thereafter, and Proposition~\ref{prop:LDPupper}, we obtain 
\begin{equation}\label{eq:ex:lowerupper}
	-\underset{x\in O}\inf I_{\rm min}(x)\le J_O \quad \mbox{and} \quad 	J_C\le -\underset{x\in C}\inf I_{\rm min}(x)
\end{equation}
for all $O\in\mathcal{O}^\uparrow$ and $C\in\mathcal{C}_c^\uparrow$, and consequently the respective bounds  in Theorem~\ref{thm:mVaradhan1} for the maxitive integral  $\phi_{J}$.
Building on Theorem~\ref{thm:maxitiveRep} and Remark~\ref{rem:events}, the maxitive integral  $\phi_{J}$ has the following representation in terms of a sequence of Choquet integrals.
\begin{proposition}\label{eq:seqCap}
	For every $f\in \overline{L}^\uparrow\cap \overline{U}^\uparrow$, 
	\begin{equation}
		\label{eq:maxIntPi}
		\phi_{J}(f)=\limsup_{n\to\infty}\tfrac{1}{n}\log \mathcal{E}_n(\exp(nf))\\
		=\limsup_{n\to\infty}\tfrac{1}{n}\log \int_0^\infty \mu_n\left(\exp(n f)>x\right){\rm d}x.
	\end{equation}
\end{proposition}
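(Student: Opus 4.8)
The plan is to prove the two equalities in \eqref{eq:maxIntPi} separately, exploiting the structure established in the preceding sections. For the \emph{second} equality, I would first recall the layer-cake representation of the Shilkret/Choquet-type integral: for a nonnegative function $g$ and a monotone set function $\mu_n$, one has $\mathcal{E}_n(g)=\int_0^\infty \mu_n(g>x)\,{\rm d}x$ whenever $\mathcal{E}_n$ is the Choquet integral associated to $\mu_n$. Applying this with $g=\exp(nf)$ (which is nonnegative and bounded above since $f\in\overline{U}^\uparrow$ is bounded above) yields $\mathcal{E}_n(\exp(nf))=\int_0^\infty \mu_n(\exp(nf)>x)\,{\rm d}x$ for each fixed $n$, and the second equality follows immediately by applying $\limsup_n \tfrac1n\log(\cdot)$ to both sides. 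The one point requiring care is that the sublinear expectation $\mathcal{E}_n$ must genuinely coincide with the Choquet integral of its capacity $\mu_n$ on the relevant functions; since $\mathcal{E}_n$ is monotone, translation-invariant and positively homogeneous, its restriction to indicators recovers $\mu_n$, and the layer-cake identity is the standard comonotone-additive extension, so this should hold on the bounded-above Borel functions under consideration.

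The \emph{first} equality is the substantive one, and I would establish it by squeezing $\phi_J(f)$ between a lower and an upper bound, both equal to $\limsup_n \tfrac1n\log\mathcal{E}_n(\exp(nf))$. Denote $\Lambda_n(f):=\tfrac1n\log\mathcal{E}_n(\exp(nf))$ and observe that, by the four axioms on $\mathcal{E}_n$, the functional $f\mapsto \limsup_n\Lambda_n(f)$ inherits constant-preservation, monotonicity and the translation property, and is itself weakly maxitive via the principle of the largest term exactly as in Lemma~\ref{lem:seqCap}. The strategy is to show that this functional and $\phi_J$ agree on indicators $-\infty 1_{A^c}$ for $A\in\mathcal{OC}^\uparrow$, i.e.\ that both induce the \emph{same} concentration $J$, and then invoke Theorem~\ref{thm:maxitiveRep} to conclude that they coincide on all of $\overline{L}^\uparrow\cap\overline{U}^\uparrow$. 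Concretely, I would verify that $\limsup_n\Lambda_n(-\infty 1_{A^c})=J_A$: evaluating $\mathcal{E}_n(\exp(-\infty n 1_{A^c}))=\mathcal{E}_n(1_A)=\mu_n(A)$ gives $\Lambda_n(-\infty 1_{A^c})=\tfrac1n\log\mu_n(A)$, whose $\limsup$ is precisely $J_A$ by the definition \eqref{UpperLowerConcentrations}.

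Having matched the concentrations, the clean route is to apply Theorem~\ref{thm:maxitiveRep} to the functional $\psi(f):=\limsup_n\Lambda_n(f)$: it satisfies $\psi(0)=0$, is monotone, has the translation property, and is weakly maxitive, with associated concentration $J^\psi_A=\psi(-\infty 1_{A^c})=J_A$. The theorem then yields $\psi(f)=\phi_{J^\psi}(f)=\phi_J(f)$ for all $f\in\overline{L}^\uparrow\cap\overline{U}^\uparrow$, which is exactly the first equality in \eqref{eq:maxIntPi}. I expect the main obstacle to be the verification that $\psi$ is well-defined with values in $[-\infty,\infty)$ and truly satisfies all hypotheses of Theorem~\ref{thm:maxitiveRep} on the \emph{bounded-above} domain $\overline{LU}^\uparrow$ rather than merely on indicators—in particular, confirming that $\exp(nf)$ stays in $\overline{B}(E)$ so that $\mathcal{E}_n(\exp(nf))$ is finite, and that the weak maxitivity of $\psi$ on $\overline{U}^\uparrow$ versus $\overline{L}^\uparrow$ follows from the subadditivity \emph{(ii)} of $\mathcal{E}_n$ together with the principle of the largest term. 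Once these routine but essential checks are in place, the two equalities close simultaneously.
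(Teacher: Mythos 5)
Your treatment of the \emph{first} equality is correct and is essentially the paper's own argument: the functional $\tilde\psi(f):=\limsup_{n\to\infty}\tfrac1n\log\mathcal{E}_n(\exp(nf))$ satisfies properties (i)--(iii) of Theorem~\ref{thm:maxitiveRep}, is weakly maxitive by the subadditivity of $\mathcal{E}_n$ combined with the principle of the largest term, and induces the concentration $J$ via $\tilde\psi(-\infty 1_{A^c})=\limsup_n\tfrac1n\log\mu_n(A)=J_A$; Theorem~\ref{thm:maxitiveRep} then gives $\tilde\psi=\phi_J$ on $\overline{L}^\uparrow\cap\overline{U}^\uparrow$.

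Your proof of the \emph{second} equality, however, has a genuine gap. You assert that for each fixed $n$ the sublinear expectation coincides with the Choquet integral of its capacity, $\mathcal{E}_n(g)=\int_0^\infty\mu_n(g>x)\,{\rm d}x$, on the grounds that the layer-cake formula is ``the standard comonotone-additive extension.'' But properties (i)--(iv) (monotonicity, subadditivity, translation invariance, positive homogeneity) do \emph{not} imply comonotone additivity, and a sublinear expectation is in general \emph{not} determined by its capacity $\mu_n(A)=\mathcal{E}_n(1_A)$ --- this is exactly the point the paper emphasizes in the remark immediately following the proposition. A concrete counterexample: on $E=\{1,2,3\}$ take $\mathcal{E}(g):=\max\{E_{P_1}[g],E_{P_2}[g]\}$ with $P_1=(0,1,0)$, $P_2=(\tfrac12,0,\tfrac12)$, and $g=2\cdot 1_{\{1\}}+1_{\{2\}}$; then $\mathcal{E}(g)=1$, while $\mu(\{1,2\})=1$ and $\mu(\{1\})=\tfrac12$ give $\int_0^\infty\mu(g>x)\,{\rm d}x=1+\tfrac12=\tfrac32$. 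So the per-$n$ identity fails, and with it your derivation of the second equality: that equality is itself an asymptotic statement (indeed, it is the substantive content of the proposition, namely that the entropic limit of $\mathcal{E}_n$ is fully specified by the capacities $\mu_n$ even though $\mathcal{E}_n$ itself is not). The repair is the paper's first step: apply the same Theorem~\ref{thm:maxitiveRep} argument directly to $\psi(f):=\limsup_{n\to\infty}\tfrac1n\log\int_0^\infty\mu_n(\exp(nf)>x)\,{\rm d}x$, whose weak maxitivity follows from $\mu_n(A)\le\mu_n(B)+\mu_n(C)$ for $A\subset B\cup C$ applied to the superlevel sets $\{\exp(n\vee_i g_i)>x\}=\cup_i\{\exp(ng_i)>x\}$ together with the principle of the largest term, and which induces the same concentration $J$; both functionals then equal $\phi_J$, hence each other.
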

\begin{proof}
	Define $\psi\colon \overline{LU}^\uparrow\to [-\infty,\infty)$ as the right hand side of~\eqref{eq:maxIntPi}. 
	Inspection shows that $\psi$ satisfies properties (i)-(iii) in Theorem~\ref{thm:maxitiveRep}. 
	Next, we show that $\psi$ is weakly maxitive.  To that end, let $f\in \overline{U}^{\uparrow}$, $g_1,g_2,\ldots,g_N\in \overline{L}^{\uparrow}$, and $f\le \vee_{i=1}^N g_1$.    
	Then, by the principle of the largest term, 
	\begin{align*}
		\psi(f)	
		&\le \limsup_{n\to\infty}\tfrac{1}{n}\log \int_0^\infty \mu_n\left(\exp\left(n (\vee_{i=1}^N g_i)\right)> x\right){\rm d}x\\
		&= \limsup_{n\to\infty}\tfrac{1}{n}\log \int_0^\infty \mu_n\left(\cup_{i=1}^N \{\exp(n g_i)> x\}\right){\rm d}x\\
		&\le \limsup_{n\to\infty}\tfrac{1}{n}\log \sum_{i=1}^N \int_0^\infty \mu_n\left(\exp(n g_i)> x\right){\rm d}x\\
		&\le \vee_{i=1}^N\limsup_{n\to\infty}\tfrac{1}{n}\log \left(\int_0^\infty \mu_n\left(\exp(n g_i)> x\right){\rm d}x\right)\\
		&= \vee_{i=1}^N \psi(g_i),
	\end{align*}
which shows that $\psi$ is weakly maxitive. Moreover,  for $A\in \mathcal{OC}^{\uparrow}$, 
	\begin{align*}
		\psi(-\infty 1_{A^c})&=\underset{n\to\infty}{\limsup}\tfrac{1}{n}\log \int_0^\infty \mu_n\left(\exp(n (-\infty)1_{A^c})> x\right){\rm d}x\\
		&=\limsup_{n\to\infty}\tfrac{1}{n}\log \int_0^\infty \mu_n\left(1_A> x\right){\rm d}x\\
		&=\limsup_{n\to\infty}\tfrac{1}{n}\log \int_0^1 \mu_n\left(A\right){\rm d}x\\
		&=\limsup_{n\to\infty}\tfrac{1}{n}\log \mu_n\left(A\right)\\
		&=J_A.
	\end{align*} 
Hence, $J^\psi=J$ and we can apply Theorem~\ref{thm:maxitiveRep} to conclude $\psi=\phi_{J}$. 

In a second step, we define 
\[
\tilde\psi\colon \overline{LU}^\uparrow\to [-\infty,\infty), \quad \tilde\psi(f):=\limsup_{n\to\infty}\tfrac{1}{n}\log \mathcal{E}_n\big(\exp(nf)\big).
\]
Direct verification shows that $\tilde\psi$ satisfies properties (i)-(iii) in Theorem~\ref{thm:maxitiveRep}. 
Moreover, it is weakly maxitive. Indeed, let $f\in \overline{U}^{\uparrow}$ and $g_1,g_2,\ldots,g_N\in \overline{L}^{\uparrow}$ such that $f\le \vee_{i=1}^N g_1$.    
Again, by the principle of the largest term, 
	\begin{align*}
		{\tilde\psi}(f)&\le \underset{n\to\infty}\limsup \frac{1}{n}\log\mathcal{E}_n\left(e^{n (\vee_{i=1}^N g_i)}\right)\\
		&\le \underset{n\to\infty}\limsup\frac{1}{n}\log \sum_{i=1}^N\mathcal{E}_n\left(  e^{n g_i}\right) \\
		&\le \vee_{i=1}^N \underset{n\to\infty}\limsup\frac{1}{n}\log \mathcal{E}_n\left(  e^{n g_i}\right) \\
		&=\vee_{i=1}^N{\tilde\psi}(g_i).
	\end{align*}
	Since $J_A=\tilde\psi(-\infty 1_{A^c})$ for all $A\in \mathcal{OC}^{\uparrow}$, it follows from Theorem~\ref{thm:maxitiveRep} that $\tilde\psi=\phi_{J}$. 
	Together with the first part, we obtain equality~\eqref{eq:seqCap}.
\end{proof}
	In contrast to linear expectations in standard probability theory, a sublinear expectation $\mathcal{E}$ is in general not determined by its associated capacity $\mu(A)=\mathcal{E}(1_A)$; see, e.g.,~\cite{peng2019nonlinear}. In other words, the sublinear expectation $\mathcal{E}$ may contain more information than the capacity $\mu$. 
    However,  Proposition~\ref{eq:seqCap}  implies that the asymptotic entropic version $\limsup_{n\to\infty}\tfrac{1}{n}\log \mathcal{E}_n(\exp(nf))$
is fully specified through the sequence of capacities $(\mu_n)_{n\in\N}$ by means of the right hand side of equation~\eqref{eq:maxIntPi}.  

Under the assumptions of Theorem~\ref{thm:mLPimpliesLDP}, it follows that $\phi_J$ satisfies the mLP with rate function $I$ if and only if $J$ satisfies the mLDP with rate function $I$. Under a slightly stronger version of the mLDP, the limit superior in \eqref{eq:maxIntPi} can even be replaced by a limit. More precisely, we obtain the equivalence between the following versions of the classical large deviation principle and the Laplace principle.
\begin{corollary}
	Suppose that $(E,\le)$ satisfies  Assumption \ref{ass1} and  Assumption \ref{ass:sep}. 
	Let $I\colon E\to [0,\infty]$ be an increasing  lower semicontinuous function.	If $(\mu_n)_{n\in\mathbb{N}}$ satisfies
	\begin{equation}\label{muLDP}
		 -\underset{x\in O}\inf I(x)\le \underset{n\to\infty}{\liminf}\tfrac{1}{n}\log\mu_n(O)\qquad\mbox{and}\qquad	\underset{n\to\infty}{\limsup}\tfrac{1}{n}\log\mu_n(C)\le -\underset{x\in C}\inf I(x)	
	\end{equation}	
	for all $O\in\mathcal{O}^\uparrow$ and $C\in\mathcal{C}^\uparrow$, then $(\mathcal{E}_n)_{n\in\mathbb{N}}$ satisfies 
		\begin{equation}\label{muLP}
	\underset{n\to\infty}\lim\tfrac{1}{n}\log \mathcal{E}_n(\exp(nf))
	=\underset{x\in E}\sup\{f(x)-I(x)\}
	\end{equation}
	for all $f\in C^\uparrow_b$. The converse assertion holds true if $E$ is monotonically normal or $I$ has compactly generated sublevel sets. 
\end{corollary}
\begin{proof}
Let $f\in C_b^\uparrow$. It follows from Theorem \ref{thm:mVaradhan1} that
\[
\phi_J(f)=\underset{n\to\infty}\limsup\tfrac{1}{n}\log \mathcal{E}_n(\exp(nf))
=\underset{x\in E}\sup\{f(x)-I(x)\}.
\] 
To show that the previous limit superior is a limit, we define \[\underline{\phi}(f):=\underset{n\to\infty}\liminf\tfrac{1}{n}\log \mathcal{E}_n(\exp(nf)).\]
Fix $\varepsilon>0$ and $x\in E$. 
Since $f$ is upper semicontinuous and increasing, there exists $U\in \mathcal{O}^\uparrow$ with $x\in U$ such that $\inf_{y\in U}f(y)\ge f(x)-\varepsilon$. 
Then,
\begin{align*}
\underline{\phi}(f)&\ge \underline{\phi}(f 1_U - \infty 1_{U^c})\\&=f(x)-\varepsilon +  \underline{\phi}( - \infty 1_{U^c}) \\
&=f(x)-\varepsilon + \underset{n\to\infty}{\liminf}\tfrac{1}{n}\log\mu_n(U)\\
&\ge f(x)-\varepsilon -\underset{y\in U} \inf I(y)\\
&\ge f(x)-\varepsilon - I(x).
\end{align*}
Letting $\varepsilon \downarrow 0$ and taking the supremum over all $x\in E$ yields 
\[
\underset{x\in E}\sup\{f(x)-I(x)\}\le \underline{\phi}(f)\le \phi_J(f)=\underset{x\in E}\sup\{f(x)-I(x)\}.
\]
As for the converse assertion, suppose that $E$ is monotonically normal or $I$ has compactly generated sublevel sets.
By Theorem \ref{thm:mLPimpliesLDP}, 
\[
\underset{n\to\infty}{\limsup}\tfrac{1}{n}\log\mu_n(C)=J_C\le -\underset{x\in C}\inf I(x)\quad\mbox{ for all }C\in \mathcal{C}^\uparrow.
\]
To show the lower bound, fix $O\in\mathcal{O}^\uparrow$. Let $x\in O$ and $m\in\mathbb{N}$. By Lemma~\ref{lem:basictop},
it holds $O^c\in \mathcal{C}^\downarrow$. Hence, due to Assumption~\ref{ass:sep}, there exists $f_m\in C_b^\uparrow$ such that $f_m(x)=0$, $O^c\subset f^{-1}_m(-m)$, and $-m\le f(y)\le 0$ for all $y\in E$.  We obtain
\begin{align*}
\underset{n\to\infty}{\liminf}\tfrac{1}{n}\log\mu_n(O)\vee (-m)
&=\underset{n\to\infty}\liminf\tfrac{1}{n}\log\mathcal{E}\big(\exp(n(-\infty 1_{O^c}))\big)\vee (-m)\\
&\ge \underset{n\to\infty}\liminf\tfrac{1}{n}\log\mathcal{E}\big(\exp(n((-\infty 1_{O^c})\vee (-m))\big)\\
&\ge \underline{\phi}(f_m)=\phi_J(f_m)\ge f_m(x)-I(x)=-I(x).
\end{align*}
By letting $m\to\infty$ and taking the supremum over all $x\in O$, 
\[
\underset{n\to\infty}{\liminf}\tfrac{1}{n}\log\mu_n(O)\ge -\underset{x\in O}\inf I(x).\qedhere
\] 
\end{proof}

\subsection{Sample means of i.i.d.~sequences}\label{subsec:iid}
Let $E$ be a locally convex Hausdorff topological real vector space endowed with a preorder induced by a closed convex cone $E_+\subset E$. Then, the assumptions of Section~\ref{sec:monLDP} are satisfied as outlined in Remark~\ref{rem:ass:top}.  

For each $n\in\N$, let $X_n:=\tfrac{1}{n}\sum_{i=1}^n\xi_i$ be the sample mean of an i.i.d. sequence $(\xi_n)_{n\in\N}$ of $E$-valued random variables defined on a probability space $(\Omega,\mathcal{F},\mathbb{P})$. 
For $A\in\mathcal{OC}^\uparrow$, we define the concentration $J_A=\limsup_{n\to\infty} \tfrac{1}{n}\log\mathbb{P}(X_n\in A)$ with minimal rate function ${\I}$.  
Due to Proposition \ref{eq:seqCap}, the concentration ${J}$ is weakly maxitive and its maxitive integral admits the representation
\begin{equation}\label{eq:repCramer}
	\phi_{J}(f)=\limsup_{n\to\infty}\tfrac{1}{n}\log\mathbb{E}_{\mathbb{P}}[\exp(n f(X_n))]\quad\mbox{for all }f\in \overline{L}^\uparrow\cap \overline{U}^\uparrow.
\end{equation}
\begin{lemma}\label{lem:limExists}
	For every convex set $A\in\mathcal{OC}^\uparrow$, 
	\begin{equation}\label{eq:JconvexI}
		{J}_A=\underset{n\in\N}\sup \tfrac{1}{n}\log\mathbb{P}(X_n\in A).
	\end{equation}
	Moreover, for every convex set $O\in \mathcal{O}^\uparrow$,  
	\begin{equation}\label{eq:JconvexII}
		{J}_{O}=\lim_{n\to\infty} \tfrac{1}{n}\log\mathbb{P}(X_n\in O)=\underset{n\in\N}\sup \tfrac{1}{n}\log\mathbb{P}(X_n\in O).
	\end{equation}
\end{lemma}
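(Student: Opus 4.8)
The plan is to derive both assertions from a single structural fact: for convex $A$ the sequence $p_n:=\mathbb{P}(X_n\in A)$ is supermultiplicative. Write $a_n:=\log p_n\in[-\infty,0]$. The starting point is the block decomposition
\[
X_{n+m}=\frac{n}{n+m}\cdot\frac1n\sum_{i=1}^{n}\xi_i+\frac{m}{n+m}\cdot\frac1m\sum_{i=n+1}^{n+m}\xi_i ,
\]
in which the two averages are independent and distributed as $X_n$ and $X_m$. Since $A$ is convex, the event that both averages lie in $A$ forces their convex combination $X_{n+m}$ into $A$, so independence gives $p_{n+m}\ge p_np_m$, that is $a_{n+m}\ge a_n+a_m$. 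I emphasize that this step uses only convexity of $A$ and therefore applies to every convex $A\in\mathcal{OC}^\uparrow$, open or closed.

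For \eqref{eq:JconvexI} I would use only the soft half of Fekete's lemma for superadditive sequences: iterating $a_{qm}\ge q\,a_m$ along the subsequence $\{qm\}_{q\in\N}$ gives $\limsup_n a_n/n\ge a_m/m$ for every $m$, hence $\limsup_n a_n/n\ge\sup_m a_m/m$, while the reverse inequality is trivial. Thus $J_A=\limsup_n\tfrac1n\log p_n=\sup_n\tfrac1n\log p_n$, which is \eqref{eq:JconvexI} and also establishes the second equality in \eqref{eq:JconvexII}. Crucially, this does not assert that the limit exists; for a general convex set one controls only the $\limsup$.

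The substance of \eqref{eq:JconvexII} is that for an \emph{open} convex $O$ the full limit exists, for which it suffices to prove $\liminf_n a_n/n\ge\sup_m a_m/m$. Fix $m$ with $p_m>0$. The idea is to write $n=qm+r$ with $0\le r<m$ and to split $X_n$ into its leading $X_{qm}$-block, whose weight $qm/n\to1$, and a remainder $X_r$-block, whose weight $r/n\to0$. By tightness of the finitely many laws of $X_1,\dots,X_{m-1}$, the remainder is confined to a fixed compact set $B$ with probability at least $\tfrac12$; because $O$ is open, the asymptotically vanishing contribution of this bounded remainder can be absorbed, so that $X_n\in O$ on $\{X_{qm}\in O_1\}\cap\{X_r\in B\}$ for all large $n$, where $O_1$ is a suitably shrunk open convex subset with $\mathbb{P}(X_m\in O_1)>0$. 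Supermultiplicativity applied to $O_1$ gives $\mathbb{P}(X_{qm}\in O_1)\ge\mathbb{P}(X_m\in O_1)^q$, whence $p_n\ge\tfrac12\,\mathbb{P}(X_m\in O_1)^q$; letting $n\to\infty$ (so $q/n\to1/m$) and then $O_1\uparrow O$ yields $\liminf_n a_n/n\ge\tfrac1m\log p_m$, and the supremum over $m$ finishes the proof.

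I expect the third paragraph to be the main obstacle. The identity $\limsup=\sup$ is essentially free from supermultiplicativity, but upgrading it to a genuine limit is precisely the Cram\'er-type lower bound, and the delicate point is absorbing the negligible-yet-possibly-unbounded remainder block into $O$. This is immediate when $E=\R^d$ by local compactness, but in a general locally convex space it must be extracted from tightness of the increments together with the openness and convexity of $O$; it is exactly this mechanism that forces the restriction to open sets in \eqref{eq:JconvexII}, since for a merely closed convex set the remainder cannot be controlled and only the $\sup$-representation \eqref{eq:JconvexI} survives.
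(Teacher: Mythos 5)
Your first two paragraphs are correct, and they reproduce the standard mechanism: supermultiplicativity of $p_n=\mathbb{P}(X_n\in A)$ for convex $A$ via the block decomposition, plus the superadditive half of Fekete's lemma, give \eqref{eq:JconvexI} and the identity $\limsup=\sup$ in \eqref{eq:JconvexII}, with the $-\infty$ values handled correctly. For context: the paper does not reprove the lemma at all; it simply cites Propositions 12.5 and 12.2 of \cite{petit} and remarks that those proofs carry over from separable Banach spaces to general topological vector spaces. So the real comparison is with Petit's arguments, whose first half your proposal matches.

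The genuine gap is in your third paragraph, and it is twofold. First, your mechanism rests on tightness of the laws of $X_1,\dots,X_{m-1}$. In the setting of this lemma $E$ is an arbitrary locally convex Hausdorff space and nothing is assumed about the laws; Borel probability measures on such spaces need not be tight (tightness is automatic only on Polish spaces such as $\R^d$ or separable Banach spaces). The paper treats (convex) tightness as a genuine extra hypothesis --- it enters only in Lemma~\ref{lem:tightness} and Theorem~\ref{thm:CramerRate} --- while Lemma~\ref{lem:convexCram} applies the present lemma to the open convex sets $\tfrac12{\uparrow}U+\tfrac12{\uparrow}V$ with no tightness whatsoever; so your argument proves a strictly weaker statement than the one needed. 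Second, even granting tightness, the absorption step is asserted rather than proved, and it is false under the natural reading of ``suitably shrunk'': the inclusion you need is $\tfrac{qm}{n}O_1+\tfrac{r}{n}B\subset O$ for all large $n$, and the error to be absorbed is not only $\tfrac{r}{n}X_r'$ but also the contraction $-\tfrac{r}{n}X_{qm}$ of the \emph{leading} block, which is not uniformly small when $O_1$ is unbounded. For instance, in $\R^2$ take $O=\{(x,y)\colon x>0,\ y>1/x\}$, $O_1=\{(x,y)\colon x>0,\ y>2/x\}$ (so ${\rm cl}(O_1)\subset O$) and $B=\{(-1,-1)\}$: for every $n$ with $r\ge1$ the point $z=(T,3/T)\in O_1$ with $T$ large satisfies $\tfrac{qm}{n}z+\tfrac{r}{n}(-1,-1)\notin O$. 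In a normed space this can be repaired (cut $O_1$ down to a bounded set, or shrink in the sense $d(O_1,O^c)\ge\varepsilon$ and exploit concavity of $d(\cdot,O^c)$ on $O$), but in a non-normable locally convex space there are no bounded open sets and no metric in which to ``shrink'', so the proposed mechanism collapses exactly where the lemma's generality matters.

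The ingredient that makes the argument work in full generality --- replacing both tightness and boundedness --- is the Minkowski gauge. Translate so that $0\in O$ (replacing $\xi_i$ by $\xi_i-c_0$ translates every $X_n$), and let $\rho$ be the gauge of $O$: it is sublinear, continuous, and finite on all of $E$ because the open set $O$ is absorbing, and $O=\{\rho<1\}$. For $\theta<1$ put $K_\theta:=\{\rho\le\theta\}$, a closed convex set, so your supermultiplicativity applies to it: $\mathbb{P}(X_{qm}\in K_\theta)\ge\mathbb{P}(X_m\in K_\theta)^q$. Positive homogeneity controls the contraction of the leading block exactly, and the remainder requires only the event $\{\rho(X_r')\le M\}$, whose probability exceeds $\tfrac12$ for $M$ large simply because $\rho(X_r')$ is a finite real-valued random variable; indeed, on $\{X_{qm}\in K_\theta\}\cap\{\rho(X_r')\le M\}$ sublinearity gives
\[
\rho(X_n)\le\tfrac{qm}{n}\rho(X_{qm})+\tfrac{r}{n}\rho(X_r')\le\theta+\tfrac{m}{n}M<1
\qquad\mbox{whenever } n>\tfrac{mM}{1-\theta},
\]
hence $p_n\ge\tfrac12\,\mathbb{P}(X_m\in K_\theta)^q$. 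Letting $n\to\infty$, then $\theta\uparrow1$ (so that $K_\theta\uparrow O$ and $\mathbb{P}(X_m\in K_\theta)\uparrow p_m$), and finally taking the supremum over $m$ yields $\liminf_n\tfrac1n\log p_n\ge\sup_m\tfrac1m\log p_m$, which together with your first part proves \eqref{eq:JconvexII}. This is in substance the proof behind \cite[Proposition 12.2]{petit}, and it is precisely why the paper can assert that the result holds in general topological vector spaces. Incidentally, your closing diagnosis is also slightly off: the failure of \eqref{eq:JconvexII} for closed convex sets is a periodicity phenomenon (e.g.\ $A=\{0\}$ with $\xi_i=\pm1$ gives $p_n=0$ for all odd $n$), not an issue of controlling the remainder block.
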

\begin{proof}
	The equalities~\eqref{eq:JconvexI} and \eqref{eq:JconvexII} follow from~\cite[Proposition 12.5]{petit} and \cite[Proposition 12.2]{petit}, respectively.  
    Although~\cite{petit} assumes that the state space is a separable Banach space, these particular results are also valid for general topological vector spaces. 
\end{proof}


\begin{lemma}\label{lem:convexCram}
	$\I\colon E\to [0,\infty]$ is convex. 
\end{lemma}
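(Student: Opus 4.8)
The plan is to verify the single hypothesis of Lemma~\ref{lem:convexRate}, namely the midpoint inequality
\[
J_{\frac{1}{2}{\uparrow}U + \frac{1}{2}{\uparrow}V}\ge \tfrac{1}{2}J_{{\uparrow}U} + \tfrac{1}{2}J_{{\uparrow}V}\quad\text{for all }U,V\in\mathcal{U},
\]
and then to invoke that lemma. Since $E$ is locally convex, I would first take $\mathcal{U}$ to be a base of open convex sets. This is harmless: the minimal rate function is defined by $\I(x)=\sup_{f\in L^\uparrow}(f(x)-\phi_J(f))$ without reference to any base, so establishing convexity of $\I$ via a convenient choice of base is legitimate.

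Next I would record the relevant geometry. For convex $U$ one has ${\uparrow}U = U + E_+$, which is open, convex and upward closed, so ${\uparrow}U\in\mathcal{O}^\uparrow$ by Lemma~\ref{lem:basictop}. Using that $E_+$ is a convex cone, so that $\tfrac{1}{2}E_+=E_+$ and $E_+ + E_+ = E_+$, I would identify
\[
\tfrac{1}{2}{\uparrow}U + \tfrac{1}{2}{\uparrow}V = \big(\tfrac{1}{2}U + \tfrac{1}{2}V\big) + E_+ = {\uparrow}\big(\tfrac{1}{2}U + \tfrac{1}{2}V\big),
\]
which is again a convex set in $\mathcal{O}^\uparrow$. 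Thus all three sets occurring in the midpoint inequality are convex open upsets, and Lemma~\ref{lem:limExists}, equation~\eqref{eq:JconvexII}, applies to each: the sequence $\tfrac{1}{m}\log\mathbb{P}(X_m\in\,\cdot\,)$ converges to the corresponding $J$-value.

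The crux is a probabilistic superadditivity estimate that exploits the i.i.d.\ structure. Writing $X_{2n} = \tfrac{1}{2}Y_n + \tfrac{1}{2}Z_n$ with $Y_n := \tfrac{1}{n}\sum_{i=1}^n \xi_i$ and $Z_n := \tfrac{1}{n}\sum_{i=n+1}^{2n}\xi_i$, the halves $Y_n$ and $Z_n$ are independent and each is distributed as $X_n$. On the event $\{Y_n\in{\uparrow}U\}\cap\{Z_n\in{\uparrow}V\}$ one has $X_{2n}\in\tfrac{1}{2}{\uparrow}U + \tfrac{1}{2}{\uparrow}V$, whence by independence
\[
\mathbb{P}\big(X_{2n}\in\tfrac{1}{2}{\uparrow}U + \tfrac{1}{2}{\uparrow}V\big)\ge \mathbb{P}(X_n\in{\uparrow}U)\,\mathbb{P}(X_n\in{\uparrow}V).
\]
Applying $\tfrac{1}{2n}\log(\,\cdot\,)$ and letting $n\to\infty$, the left-hand side converges along the subsequence $m=2n$ to $J_{\frac{1}{2}{\uparrow}U + \frac{1}{2}{\uparrow}V}$ since the full limit over $m$ exists by the previous step, while the right-hand side converges to $\tfrac{1}{2}J_{{\uparrow}U} + \tfrac{1}{2}J_{{\uparrow}V}$; if one of the two probabilities vanishes the right-hand side is $-\infty$ and the inequality is trivial. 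This yields the midpoint inequality, and Lemma~\ref{lem:convexRate} then gives that $\I$ is convex.

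I expect the probabilistic superadditivity step to be the only substantive point, since it is where independence and the convexity of the sets are genuinely used. The rest—checking that ${\uparrow}U$, ${\uparrow}V$ and their midpoint sum are convex open upsets so that the limits from Lemma~\ref{lem:limExists} are available, and that passing to the subsequence $m=2n$ is legitimate—is routine bookkeeping.
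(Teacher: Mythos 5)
Your proof is correct and follows essentially the same route as the paper: the same splitting $X_{2n}=\tfrac{1}{2}X_n+\tfrac{1}{2}X_{n+1,2n}$ into independent halves, the same superadditivity inequality $\mathbb{P}(X_{2n}\in\tfrac{1}{2}{\uparrow}U+\tfrac{1}{2}{\uparrow}V)\ge\mathbb{P}(X_n\in{\uparrow}U)\,\mathbb{P}(X_n\in{\uparrow}V)$, followed by Lemma~\ref{lem:limExists} to pass to limits and Lemma~\ref{lem:convexRate} with the base of convex open sets. Your explicit verification that ${\uparrow}U$, ${\uparrow}V$ and $\tfrac{1}{2}{\uparrow}U+\tfrac{1}{2}{\uparrow}V={\uparrow}(\tfrac{1}{2}U+\tfrac{1}{2}V)$ are convex open upsets, and your handling of the degenerate case of vanishing probabilities, are details the paper leaves implicit but add nothing structurally different.
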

\begin{proof}
	We adapt the proof of~\cite[Proposition 12.9]{petit}. 
	Let $U,V$ be open and convex sets.   
	For fixed $n\in\N$, define $X_{n+1,2n}=\tfrac{1}{n}\sum_{i=n+1}^{2 n}\xi_i$.  
	Since $X_{2n}=\tfrac{1}{2}(X_n+X_{n+1,2n})$, as well as $X_n$ and $X_{n+1,2 n}$ are independent,  
	\[
	\mathbb{P}(X_{n}\in {\uparrow}U)\mathbb{P}(X_{n+1,2 n}\in {\uparrow}V)=\mathbb{P}(\{X_n\in {\uparrow}U\}\cap\{X_{n+1,2n}\in {\uparrow}V\})\le 
	\mathbb{P}(X_{2n}\in \tfrac{1}{2}{\uparrow}U + \tfrac{1}{2}{\uparrow}V). 
	\] 
	Hence,
	\[ 
	\tfrac{1}{2}\tfrac{1}{n} \log \mathbb{P}(X_{n}\in {\uparrow}U)+  \tfrac{1}{2}\tfrac{1}{n} \log \mathbb{P}(X_{n}\in {\uparrow}V)\le 
	\tfrac{1}{2 n}\log \mathbb{P}(X_{2n}\in \tfrac{1}{2}{\uparrow}U + \tfrac{1}{2}{\uparrow}V).
	\]
	It follows from Lemma~\ref{lem:limExists} that
	\begin{align*}
		{J}_{\tfrac{1}{2}{\uparrow}U + \tfrac{1}{2}{\uparrow}V}=&\lim_{n\to\infty}\tfrac{1}{n}\log\mathbb{P}(X_n\in \tfrac{1}{2}{\uparrow}U + \tfrac{1}{2}{\uparrow}V)\\
		=&\lim_{n\to\infty}\tfrac{1}{2n}\log\mathbb{P}(X_{2n}\in \tfrac{1}{2}{\uparrow}U + \tfrac{1}{2}{\uparrow}V)\\
		\ge & \tfrac{1}{2}\lim_{n\to\infty}\tfrac{1}{n}\log\mathbb{P}(X_n\in {\uparrow}U)+
		\tfrac{1}{2}\lim_{n\to\infty}\tfrac{1}{n}\log\mathbb{P}(X_n\in {\uparrow}V)\\
		= & \tfrac{1}{2}{J}_{{\uparrow}U} + \tfrac{1}{2}{J}_{{\uparrow}V}.
	\end{align*}
	Since the topology of $E$ is generated by the collection of all convex open subsets of $E$,  
	we can apply Lemma~\ref{lem:convexRate} to conclude that $\I$ is convex. 
\end{proof}

\begin{definition}
An $E$-valued random variable $\xi$ is called \emph{convex tight} if for every $\varepsilon>0$ there exists a convex compact $K\subset E$ such that $\mathbb{P}(\xi\in K)\ge 1-\varepsilon$.  Moreover, $\xi$ is said to be \emph{convex inner regular} if for every $\varepsilon>0$ and each convex open $O\subset E$, there exists a convex compact $K\subset O$ such that $\mathbb{P}(\xi \in K)\ge \mathbb{P}(\xi\in O)-\varepsilon$. 
\end{definition}
By adapting~\cite[Proposition 12.7]{petit} to the present setting, 
	we remark that every $E$-valued random variable that is convex tight is also convex inner regular. Moreover, if $E$ is a separable Banach space, then every random variable is automatically convex tight and convex inner regular; see~\cite[Proposition 12.4]{petit} and \cite[Proposition 12.7]{petit}.

From now on, let $\xi$  be an $E$-valued random variable distributed as $\xi_1$.
We need the following result from~\cite{petit}. For the  sake of completeness, we provide a proof.   
\begin{lemma}\label{lem:tightness}
	Let $f\colon E\to (0,\infty)$ be a Borel measurable function, and suppose that $\xi$ is convex tight. 
	Then, for every $\varepsilon>0$, there exists a convex compact $K\subset E$ such that
	\[
		\varepsilon^{-1} \wedge \Big(\log\mathbb{E}_{\mathbb{P}}[f(\xi)]-\varepsilon\Big)\le \log\mathbb{E}_{\mathbb{P}}[f(\xi) 1_K(\xi)].
	\]
\end{lemma}
\begin{proof}
     Suppose first that $f\colon E\to\R$ is bounded.    
	Fix $\varepsilon>0$. 
	Since $\xi$ is convex tight, there exists a compact set $K\subset E$ such that
	\[
	\mathbb{P}(\xi\in K^c)\le (1-e^{-\varepsilon})\frac{\mathbb{E}_{\mathbb{P}}[f(\xi)]}{M},
	\]
	where $|f(x)|\le M$ for all~$x\in E$. 
	Then, 
	\begin{align*}
		\mathbb{E}_{\mathbb{P}}[f(\xi)]&=\mathbb{E}_{\mathbb{P}}[f(\xi)1_{K}(\xi)]+\mathbb{E}_{\mathbb{P}}[f(\xi)1_{K^c}(\xi)]\\
		&\le \mathbb{E}_{\mathbb{P}}[f(\xi)1_{K}(\xi)]+ M\mathbb{P}(\xi \in K^c)\\
		&\le \mathbb{E}_{\mathbb{P}}[f(\xi)1_{K}(\xi)]+ (1-e^{-\varepsilon}){\mathbb{E}_{\mathbb{P}}[f(\xi)]},
	\end{align*}
	which shows that $\log\mathbb{E}_{\mathbb{P}}[f(\xi)]-\varepsilon\le \log\mathbb{E}_{\mathbb{P}}[f(\xi) 1_K(\xi)]$.
	
	In case that $f$ is not bounded, due to the monotone convergence theorem, there exists $N\in\N$ such that 
	\[
	\varepsilon^{-1} \wedge \Big(\log\mathbb{E}_{\mathbb{P}}[f(\xi)]-\varepsilon\Big)\le \log\mathbb{E}_{\mathbb{P}}[f(\xi)\wedge N]-\varepsilon/2.
	\] 
	Since $x\mapsto f(x) \wedge N$ is bounded, it follows from the  first part that there exists a convex compact $K\subset E$ such that 
	\[
	\log\mathbb{E}_{\mathbb{P}}[f(\xi)\wedge N]-\varepsilon/2\le \log\mathbb{E}_{\mathbb{P}}[f(\xi) 1_K(\xi)].\qedhere
	\]
\end{proof}
The \emph{logarithmic moment generating function} of $\xi$ is defined  by 
\[
\Lambda\colon E^\ast_+\to[0,\infty],\quad \Lambda(\mu):=\log\mathbb{E}_{\mathbb{P}}[\exp( \mu(\xi))].
\]
In addition, we define its positive convex conjugate $\Lambda^\ast_+\colon E\to[0,\infty]$ by
$$\Lambda^\ast_+(x)=\underset{\mu\in E^\ast_+}\sup\{\mu(x)-\Lambda(\mu)\}.$$ 
Then the following monotone version of Cram\'{e}r's theorem holds. 
\begin{theorem}\label{thm:CramerRate}
	Suppose that $\xi$ is convex tight.  
	Then,  
	\begin{equation}\label{eq:mgfII}
		\I(x)=\Lambda^\ast_+(x)\quad\mbox{for all }x\in E.  
	\end{equation}
	Moreover,	
	\begin{equation}\label{eq:upperCramer}
		\limsup_{n\to\infty}\tfrac{1}{n}\mathbb{P}(X_n\in C)\le -\inf_{x\in C} \Lambda^\ast_+(x)
		\quad\mbox{ for all }C\in\mathcal{C}_c^\uparrow,
	\end{equation}
	\begin{equation}\label{eq:lowerCramer}
		\liminf_{n\to\infty}\tfrac{1}{n}\mathbb{P}(X_n\in O)\ge -\inf_{x\in O} \Lambda^\ast_+(x)
		\quad\mbox{ for all }O\in\mathcal{O}^\uparrow.	
	\end{equation}
    If additionally $X_n$ is convex tight for all~$n\in\mathbb{N}$, then  
	\[
	\lim_{n\to\infty}\tfrac{1}{n}\mathbb{P}(X_n\in O)
	=-\inf_{x\in O} \Lambda^\ast_+(x)
	\quad\mbox{ for all convex }O\in\mathcal{O}^\uparrow.
	\] 
\end{theorem}
\begin{proof}
	Fix $\mu\in E^\ast_+$.  Since $(\xi_n)_{n\in\N}$ is i.i.d., we have for all $N\in\mathbb{N}$,
	\begin{align*}
		\Lambda(\mu)&=\limsup_{n\to\infty}\tfrac{1}{n}\log\mathbb{E}_{\P}[\exp(n \mu(X_n))]\\
		&\ge \limsup_{n\to\infty}\tfrac{1}{n}\log\mathbb{E}_{\P}[\exp(n(N\wedge \mu)(X_n))]\\
		&=\phi_{J}(N\wedge \mu),
	\end{align*} 
	where the last equality follows from~\eqref{eq:repCramer}.   
	Letting $N\to\infty$, it follows from Lemma~\ref{lem:upperextension} that
		\[
		\Lambda(\mu)\ge \phi_{{J}}(\mu). 
	\]
 Fix $\varepsilon>0$. By Lemma~\ref{lem:tightness}, there exists a convex compact $K\subset E$ such that 
	\[
	\varepsilon^{-1} \wedge \Big(\Lambda(\mu)-\varepsilon\Big)\le \log\mathbb{E}_{\mathbb{P}}[e^{\mu(\xi)} 1_{K}(\xi)].
	\]
	Since $(\xi_n)_{n\in\N}$ is i.i.d. and $K$ is convex,
	\begin{align*}
		\mathbb{E}_{\mathbb{P}}[e^{n \mu(X_n)}1_{K}(X_n)]&\ge \mathbb{E}_{\mathbb{P}}[e^{\mu(\xi_1)}\cdots e^{\mu(\xi_n)} 1_{K}(\xi_1)\cdots 1_{K}(\xi_n)]\\
		&= \mathbb{E}_{\mathbb{P}}[e^{\mu(\xi)}1_{K}(\xi)]^n. 
	\end{align*}
	This shows that 
	\begin{align*}
		\varepsilon^{-1} \wedge \Big(\Lambda(\mu)-\varepsilon\Big)&\le  
		\log\mathbb{E}_{\mathbb{P}}[e^{\mu(\xi)}1_{K}(\xi)]\\
		&\le \limsup_{n\to\infty} \tfrac{1}{n}\log \mathbb{E}_{\mathbb{P}}[e^{n \mu(X_n)}1_{K}(X_n)]\\
		&\le \sup_{x\in E}\big\{\mu(x)-I_{\rm min}(x)\big\}\\
		&=I^\ast_{\rm min}(\mu),
	\end{align*}
where the last inequality follows from Lemma~\ref{lem:boundCramer} in the Appendix B. 
Since $\varepsilon>0$ was arbitrary, we obtain $\phi_J(\mu)\le \Lambda(\mu)\le I^\ast_{\rm min}(\mu)$.
Hence, since the minimal rater function $I_{\rm min}$ is convex, lower semicontinuous and increasing, in line with Proposition~\ref{prop:conI}, we obtain for all $x\in E$,
\begin{align*}
	I_{\rm min}(x)&\ge \sup_{\mu \in E_+^\ast}\{\mu(x)-\phi_J(\mu)\}\\
	&\ge \sup_{\mu \in E_+^\ast}\{\mu(x)-\Lambda(\mu)\}\\
		&\ge \sup_{\mu \in E_+^\ast}\{\mu(x)- I^\ast_{\rm min}(\mu)\}\\
		&=	I_{\rm min}(x).
\end{align*} 
This shows equation \eqref{eq:mgfII}. Moreover, since the concentration $J$ is weakly maxitive,
the upper bound~\eqref{eq:upperCramer} follows directly from Proposition~\ref{prop:LDPupper}.
To show the lower bound \eqref{eq:lowerCramer}, we consider the concentration $\underline{J}_A:=\liminf_{n\to\infty}\tfrac{1}{n}\log \mathbb{P}(X_n\in A)$ with minimal rate function $\underline{I}_{\min}$.  Let $\mathcal{U}$ be a topological base  consisting of open convex sets. 
It follows from Lemma~\ref{lem:ratefunction1} that for every $x\in E$, 
	\[
	\underline{I}_{\min}(x)=-\underset{U\in\mathcal{U}_x}\inf\underline{J}_U=-\underset{U\in\mathcal{U}_x}\inf J_U=\I(x)=\Lambda^\ast_+(x),
	\]
	where $J_U=\underline{J}_U$ is valid due to~\eqref{eq:JconvexII} in Lemma~\ref{lem:limExists}. 
	Then the lower bound \eqref{eq:lowerCramer} follows from the lower bound \eqref{eq:LDPL}, which is satisfied for the minimal rate function $\underline{I}_{\min}=\Lambda^\ast_+$. 

Finally, suppose that for each $n\in\N$ the random variable $X_n$ is convex tight and therefore  convex inner regular. 
Fix $\varepsilon>0$, a convex $O\in\mathcal{O}^\uparrow$ and $N\in\mathbb{N}$.
	Since $X_N$ is convex inner regular, there exists a convex compact $K\subset O$ such that
	\[
	\tfrac{1}{N}\log\mathbb{P}(X_N\in O)\le \tfrac{1}{N}\log\mathbb{P}(X_N\in K) + \varepsilon
	\le \tfrac{1}{N}\log\mathbb{P}(X_N\in {\uparrow}K) + \varepsilon. 
	\] 
	 It follows from Lemma~\ref{lem:compactUp} that ${\uparrow}K\in \mathcal{C}_c^\uparrow$ and ${\uparrow}K$ is a Borel set.
	 Since ${\uparrow}K$ is convex, we can apply Lemma~\ref{lem:limExists} to conclude
	\[
	{J}_{{\uparrow}K}=\limsup_{n\to\infty} \tfrac{1}{n}\log\mathbb{P}(X_n\in {\uparrow}K)
	=\sup _{n\in\mathbb{N}}\tfrac{1}{n}\log\mathbb{P}(X_n\in {\uparrow}K),
	\] 
	and therefore
	\[
	\tfrac{1}{N}\log\mathbb{P}(X_N\in O)\le \tfrac{1}{N}\log\mathbb{P}(X_N\in {\uparrow}K) + \varepsilon \le \underset{n\in\mathbb{N}}\sup \tfrac{1}{n}\log\mathbb{P}(X_n\in {\uparrow}K)  + \varepsilon = {J}_{{\uparrow}K} + \varepsilon. 
	\] 
	By Proposition~\ref{prop:LDPupper}, it holds $J_{{\uparrow}K}\le -\underset{x\in {\uparrow}K}\inf \I(x)\le -\underset{x\in O}\inf \I(x).$ 
	Hence, 
	\[
	\tfrac{1}{N}\log\mathbb{P}(X_N\in O)\le -\underset{x\in O}\inf \I(x) + \varepsilon. 
	\]
	Letting $N\to\infty$ and then $\varepsilon\downarrow 0$, 
	\[
	{J}_{O}=\lim_{N\to\infty}\tfrac{1}{N}\log\mathbb{P}(X_N\in O)\le -\underset{x\in O}\inf \I(x),
	\]
	where the limit above exists due to Proposition \ref{lem:limExists}. 
	The other inequality follows from the lower bound \eqref{eq:LDPL}, which is satisfied for the minimal rate function $\I$.
\end{proof}

The sample mean of an i.i.d.~sequence of random variables with values in~$\R^d$ satisfies the usual large deviation principle with rate function $\Lambda^\ast$; cf.~\cite[Theorem 2.2.30]{dembo}.  In infinite dimensional spaces, the  upper bound in the  large deviation principle   
\begin{equation}\label{eq:usualUpperCr}
\limsup_{n\to\infty}\tfrac{1}{n}\mathbb{P}(X_n\in C)\le -\inf_{x\in C} \Lambda^\ast(x)
\end{equation}
 is only known for certain sets $C\subset E$.    
 For instance, bounds for compact sets or convex open sets are shown in~\cite[Theorem 6.1.3]{dembo},  \cite{fuqing1997note} and \cite{petit}. 
 We obtain in \eqref{eq:upperCramer} a new upper bound for compactly generated sets.   
Moreover, each choice of the cone $E_+$ yields a class of upwards closed sets for which we obtain an upper bound in terms of $\Lambda^\ast_+$.  
By Corollary \ref{cor:tightness}, the upper bound~\eqref{eq:upperCramer} is valid for all $C\in \mathcal{C}^\uparrow$ if the concentration $J_A=\limsup_{n\to\infty}\tfrac{1}{n}\log\mathbb{P}(X_n\in A)$ is tight. 
Moreover, the  lower bound in the large deviation principle   
\begin{equation}\label{eq:usualLowerCr}
\liminf_{n\to\infty}\tfrac{1}{n}\log\mathbb{P}(X_n\in O)\ge -\inf_{x\in O} \Lambda^\ast(x)
\end{equation}
is valid for all open set $O\subset E$; see e.g.~\cite{petit}.  
Since $-\inf_{x\in O}\Lambda^\ast_+\ge -\inf_{x\in O}\Lambda^\ast$, we obtain in \eqref{eq:lowerCramer} a sharper lower bound on upwards closed open sets.

\section{Conclusions}
It is well-known that the maxitive integral with respect to a completely maxitive concentration (capacity) is uniquely determined by a rate function (possibility distribution).    
In this paper, we have introduced the notion of weak maxitivity, and we have shown that, if a concentration $J$ is weakly maxitive and tight, then the corresponding maxitive integral $\phi_J$ 
is determined by the minimal rate function $\I$ on the space of all increasing continuous functions.

Furthermore, every maxitive integral $\phi_J$ is a non-linear expectation with the translation property, 
and we have argued that $\phi_J$ is weakly maxitive if $J$ is weakly maxitive. 
Conversely, we have shown that every weakly maxitive non-linear expection $\psi$ with the translation property has a maxitive integral representation $\psi=\phi_J$ on the space of all increasing continuous  functions. 

Motivated by the theory of large deviations, we have provided different representation results for the minimal rate function $\I$. 
First, under Assumption \ref{ass:sep} (which is satisfied if the state space $E$ is a preordered topological group), we have seen that the rate function is determined by the space of all bounded increasing continuous functions, i.e.,
\[
\I(x)=\underset{f\in C_b^\uparrow}\sup\{f(x)-\phi_J(f)\}.
\]
In addition, we have formulated monotone analogues of the large deviation principle and the Laplace principle, which have been shown to be equivalent under suitable conditions and which uniquely determine the rate function within the class of increasing lower semicontinuous functions.   
Second, we have focused on the case where the rate function is a convex function on a locally convex topological real vector space $E$.    
In that case, under an additional assumption which is implied by the monotone Laplace principle, the minimal rate function is specified by the dual space, i.e.,
\[
\I(x)=\underset{\mu\in E^\ast}\sup\{\mu(x)-\phi_J(\mu)\}.
\]
	Finally, we have shown that standard large deviations theory can be understood within the framework of weakly maxitive concentrations and their maxitive integrals. 
	In turn, the present framework enlarges the scope of large deviations theory to non-standard situations, which we have illustrated with two examples. 
	On the one hand, we have covered the asymptotic concentration of capacities on preordered topological spaces. 
	On the other hand, we have established new large deviation bounds for the sample mean of  i.i.d.~sequences on upwards closed sets by showing a monotone analogue of Cram\'{e}r's theorem on  locally convex topological vector spaces.

 \begin{appendix}
\section{Auxiliary results for preordered topological groups}\label{sec:discussion}
Let $G$ be a Hausdorff topological abelian group. Given a subset $G_+\subset G$ such that 
\[
G_+ + G_+\subset G_+,\quad 0\in G_+,
\]
we endow $G$ with the preorder induced by $G_+$, i.e., $x\le y$ if and only if $y-x\in G_+$. 
Next, we show that $G$ satisfies Assumption~\ref{ass:sep}.

\begin{theorem}\label{thm:separation}
For every $A\in \mathcal{C}^{\uparrow}$ and $x\notin A$ there exists an increasing continuous function $f\colon G\to [0,1]$ such that
\[
f(x)=0\quad\mbox{ and }\quad A\subset f^{-1}(1).
\]
Similarly, for every $A\in \mathcal{C}^{\downarrow}$ and $x\notin A$, there exists an increasing continuous function $f\colon G\to [0,1]$ such that
\[
A\subset f^{-1}(0)\quad\mbox{ and }\quad f(x)=1.
\]
\end{theorem}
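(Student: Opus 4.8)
The plan is to reduce the monotone separation to a single continuous, \emph{translation-invariant} pseudometric and then read off the desired function as a distance to a suitable downset. The point is that ordinary complete regularity of topological groups is not enough here, because it only produces a separating function with no monotonicity; what couples with the semigroup property of $G_+$ to force monotonicity is precisely the invariance of the metric. So the first step is to invoke the Birkhoff--Kakutani lemma: in a topological abelian group, for every neighborhood $V_0$ of $0$ there is a continuous, symmetric, translation-invariant pseudometric $d(y,z)=\rho(y-z)$ with $\rho\ge 0$, $\rho(0)=0$, and $\{\rho<1\}\subseteq V_0$ (built from a chain of symmetric neighborhoods with $V_{n+1}+V_{n+1}+V_{n+1}\subseteq V_n$). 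Since $A$ is closed and $x\notin A$, I would first choose a symmetric open neighborhood $V_0$ of $0$ with $(x+V_0)\cap A=\emptyset$, and take $\rho$ as above for this $V_0$.

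With $\rho$ fixed I set
\[
\delta(y):=\inf_{g\in G_+}\rho(y+g-x),\qquad f:=\delta\wedge 1 .
\]
Four verifications then finish the first statement. Value at $x$: since $0\in G_+$ and $\rho(0)=0$, we get $\delta(x)\le\rho(0)=0$, so $f(x)=0$. Monotonicity: if $u\le v$, write $v=u+k$ with $k\in G_+$; then $\delta(v)=\inf_{g\in G_+}\rho(u+(k+g)-x)$, and because $k+G_+\subseteq G_+$ the infimum runs over a subset of the one defining $\delta(u)$, whence $\delta(u)\le\delta(v)$, and so $f$ is increasing. Behaviour on $A$: if $y\in A$ and $g\in G_+$ then $y+g\in A$ since $A$ is upwards closed, hence $y+g\notin x+V_0$, i.e.\ $(y+g-x)\notin V_0$, so $\rho(y+g-x)\ge 1$; taking the infimum gives $\delta(y)\ge 1$ and $f(y)=1$. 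Continuity: $\delta(y)=\inf_{g\in G_+}d(y,\,x-g)$ is the $\rho$-distance from $y$ to the set $x-G_+$, hence $1$-Lipschitz for $d$, and continuous because $d$ is continuous and vanishes on the diagonal. Thus $f\colon G\to[0,1]$ is continuous, increasing, $f(x)=0$, and $A\subset f^{-1}(1)$.

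For the second statement I would argue by order reversal rather than redo the construction. The set $\tilde G_+:=-G_+$ again satisfies $\tilde G_++\tilde G_+\subseteq\tilde G_+$ and $0\in\tilde G_+$, and the induced preorder $\tilde\le$ is the reverse of $\le$; consequently a set that is closed and downwards closed for $\le$ is closed and upwards closed for $\tilde\le$. Applying the first part to $(G,\tilde G_+)$ yields a $\tilde\le$-increasing continuous $\tilde f\colon G\to[0,1]$ with $\tilde f(x)=0$ and $A\subset\tilde f^{-1}(1)$; since $\tilde\le$-increasing means $\le$-decreasing, the function $f:=1-\tilde f$ is $\le$-increasing, continuous, satisfies $f(x)=1$, and $A\subset f^{-1}(0)$, as required. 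The main obstacle, and the only nontrivial ingredient, is the Birkhoff--Kakutani pseudometric step together with the observation that its translation invariance is exactly what makes the distance $\delta$ monotone via $G_++G_+\subseteq G_+$; everything after that is routine.
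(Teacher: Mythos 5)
Your proposal is correct, but it takes a genuinely different route from the paper. The paper adapts Husain's proof that topological groups are completely regular: it builds a dyadic chain of symmetric neighborhoods $(V_r)_{r\in\mathcal{D}}$ of $0$ with $V_1=A^c$ and $V_{k2^{-n}}+V_{2^{-n}}\subset V_{(k+1)2^{-n}}$, forces monotonicity by passing to the downsets ${\downarrow}V_r$, defines the Urysohn-type function $f(y)=1\wedge\inf\{r\in\mathcal{D}\colon y\in{\downarrow}V_r\}$, and then carries out a hands-on, case-by-case verification of continuity (the bulk of its proof). You instead package the analytic core into the Birkhoff--Kakutani pseudometrization lemma and take $f$ to be the truncated $\rho$-distance to the downset $x-G_+$; monotonicity then falls out of translation invariance combined with $k+G_+\subseteq G_+$, the value $1$ on $A$ uses upward closedness of $A$ exactly as needed, and continuity is free since a distance function is $1$-Lipschitz for the pseudometric. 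Both proofs dispatch the second statement by the same symmetry trick (the paper via $z\mapsto -z$, you via replacing $G_+$ by $-G_+$ and taking $1-\tilde f$), and all your individual verifications check out. What your route buys is brevity and a conceptual explanation of \emph{why} monotone separation works (invariant metric $+$ semigroup property of the cone); what it costs is reliance on an external, though standard, lemma whose own proof is essentially the same dyadic-chain construction the paper writes out explicitly, so the work is relocated rather than eliminated, and the paper's version remains the more self-contained of the two.
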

\begin{proof}
Notice that it suffices to prove the first statement as the second one can be obtained from the first one by applying the transformation $z\mapsto -z$.  
The argumentation is an adaptation of ~\cite[Theorem 5]{husain2018introduction}, where it is shown  that every topological group is completely regular.  

Suppose that $A\in\mathcal{C}^{\uparrow}$ and $x\notin A$. 
Without loss of generality, we  assume that $x=0$. 
Denote by $\mathcal{D}$ the set of all dyadic rational numbers of the interval $[0,1]$, i.e., $\mathcal{D}:=\{k 2^{-n}\colon n,k\in\mathbb{N},\: k\le 2^n\}$. 
As in the proof of~\cite[Theorem 5]{husain2018introduction}, it is possible to construct a family $(V_r)_{r\in\mathcal{D}}$ of open neighborhoods of $0\in G$ which satisfy the following properties:
\begin{enumerate}
\item $V_1=A^c$,
\item $V_r=-V_r$ for $r<1$,
\item $V_r\subset V_s$ if $r\le s$,
\item $V_{k 2^{-n}}+V_{2^{-n}}\subset V_{(k+1) 2^{-n}}$ for $k=1,\ldots,2^n-1$.\end{enumerate}  
Then, we define
\[
f\colon G\to [0,1],\quad f(x):=1\wedge \inf\{r\in\mathcal{D} \colon x\in {\downarrow}V_r\}.
\]
As usual, we set $\inf\emptyset:=\infty$. 
Since $0\in {\downarrow}V_r$ for all $r$, we have that $f(0)=0$. 
Suppose that $x\in A$. 
Due to (i), (iii), and Lemma~\ref{lem:basictop}, we have that ${\downarrow}V_r\subset A^c$ for all $r$. 
Hence, if $x\in A$, we have that $x\notin {\downarrow}V_r$ for $r$, and consequently $f(x)=1$. 
Finally, since ${\downarrow}V_r$ is downwards closed, it follows that $f$ is increasing. It remains to show that $f$ is continuous. 
To that end, we fix $y\in G$. 

First, suppose that $f(y)=1$. 
Let $\varepsilon>0$ and fix $n\in\N$ with ${2^{-{(n-1)}}}<\varepsilon$. 
We show that $z\in y+ V_{2^{-n}}$ implies that $z\in ({\downarrow}V_{k 2^{-n}})^c$ for all $k<2^n-2$.  
If otherwise $z\in {\downarrow}V_{k{2^{-n}}}$, then due to (ii) and (iv), 
\[
y\in z-V_{2^{-n}}=z+V_{2^{-n}}\subset {\downarrow}V_{{k} 2^{-n}}+V_{2^{-n}}\subset {\downarrow}V_{{k}2^{-n}}+{\downarrow}V_{2^{-n}}\subset {\downarrow}V_{(k+1)2^{-n}},
\]  
and therefore, $f(y)\le(k+1)2^{-n}<(2^n-1)2^{-n}<1$, which is a contradiction to $f(y)=1$. 
Thus, for $z\in x+ V_{2^{-n}}$, we have $f(z)\ge (2^n-2)2^{-n}$, and therefore
\[
|f(y)-f(z)|=1-f(z)\le 1-(2^n-2)2^{-n}=2^{-(n-1)}<\varepsilon.
\]
We conclude that $f$ is continuous at $y$. 

Second, suppose that $0<f(y)<1$. 
Let $\varepsilon>0$ and $n\in\mathbb{N}$ such that $ 2^{-(n-1)}<\varepsilon$, $2^{-(n-1)}<1-f(y)$, and $2^{-(n-1)}<f(y)$.  
Let $k\in\N$ be the smallest $i\in\{1,\ldots,2^n\}$ such that $y\in {\downarrow}V_{i 2^{-n}}$.  
 Then, $(k-1) 2^{-n}\le  f(y)\le  k 2^{-n}$ and
\[
y\in {\downarrow}V_{k 2^{-n}}\cap ({\downarrow}V_{(k-1)2^{-n}})^c.
\] 
Since $1 - f(x)>2^{-(n-1)}$, we have $1-(k-1)2^{-n}\ge 1 - f(x)>2^{-(n-1)}$, so that $k<2^n$. 
Since $f(x)>2^{-(n-1)}$, it holds $k 2^{-n}\ge f(x)> 2^{-(n-1)}$, and therefore $k>2$. 
We have $z\in  ({\downarrow}V_{(k-2)2^{-n}})^c$ whenever $z\in y+V_{2^{-n}}$. 
Otherwise,  due to (ii) and (iv), we obtain
\[
y\in z-V_{2^{-n}}=z+V_{2^{-n}}\subset {\downarrow}V_{(k-2)2^{-n}}+V_{2^{-n}}\subset {\downarrow}V_{(k-2)2^{-n}}+{\downarrow}V_{2^{-n}}\subset {\downarrow}V_{(k-1)2^{-n}},
\]  
 which is a contradiction. 
Moreover, if $z\in y+V_{2^{-m}}$, since $y\in {\downarrow}V_{k 2^{-n}}$, it follows from (iv)    that\footnote{We can apply (iv). Indeed, since $f(x)\ge (k-1)2^{-n}$, we have $1-(k-1)2^{-n}\ge 1 - f(x)>2^{-(n-1)}$, and therefore $k<2^n$. }
\[
z\in  y+V_{2^{-n}}\subset  {\downarrow}V_{k 2^{-n}} + {\downarrow}V_{2^{-n}}\subset {\downarrow}V_{(k+1)2^{-n}}.
\] 
Hence, for $z\in y+V_{2^{-n}}$, we have $(k-2)2^{-n} \le f(z)\le (k+1)2^{-n}$, and therefore
\[
|f(y)-f(z)|\le 2^{-(n-1)}<\varepsilon. 
\]
We conclude that $f$ is continuous at $y$. 

Third, suppose that $f(y)=0$. 
Let $\varepsilon>0$ and fix $n\in\mathbb{N}$ with $2^{-(n-1)}<\varepsilon$. 
Since $f(y)=0$, we have $y\in {\downarrow}V_{2^{-n}}$. 
For $z\in y+V_{2^{-n}}$, it holds
\[
z\in y+V_{2^{-n}}\subset {\downarrow}V_{2^{-n}}+{\downarrow}V_{2^{-n}}\subset 
{\downarrow}V_{2^{-(n-1)}},
\]
and therefore
\[
|f(y)-f(z)|=f(z)\le 2^{-(n-1)}<\varepsilon.
\]
We conclude that $f$ is continuous at $y$. 
The proof is complete.
\end{proof}

\begin{lemma}\label{lem:compactUp}
Suppose that $G_+$ is closed. 
If $K\subset G$ is compact, then ${\uparrow}K$ is closed.
\end{lemma}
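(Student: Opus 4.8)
The plan is to first translate the upset into the group operation. For the compact set $K$ one has ${\uparrow}K=\{y\in G\colon x\le y\text{ for some }x\in K\}=\{y\in G\colon y-x\in G_+\text{ for some }x\in K\}=K+G_+$. Thus the lemma reduces to the classical fact that in a topological group the sum of a compact set and a closed set is closed. I would state this reformulation explicitly at the outset, since it is what makes the closedness of $G_+$ usable.

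To show that $K+G_+$ is closed, I would take an arbitrary point $z\in {\rm cl}(K+G_+)$ and produce a representation $z=k+g$ with $k\in K$ and $g\in G_+$. Because $G$ carries no countability assumption, I would argue with nets rather than sequences: choose a net $(z_\alpha)$ in $K+G_+$ with $z_\alpha\to z$, and write $z_\alpha=k_\alpha+g_\alpha$ with $k_\alpha\in K$ and $g_\alpha\in G_+$. By compactness of $K$, the net $(k_\alpha)$ admits a subnet $(k_{\alpha_\beta})$ converging to some $k\in K$. Along this subnet, continuity of subtraction in the topological group $G$ yields $g_{\alpha_\beta}=z_{\alpha_\beta}-k_{\alpha_\beta}\to z-k$. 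Since $G_+$ is closed and each $g_{\alpha_\beta}\in G_+$, the limit satisfies $z-k\in G_+$. Hence $z=k+(z-k)\in K+G_+={\uparrow}K$, which proves that ${\uparrow}K$ is closed.

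The only delicate point is the passage to a convergent subnet, which is needed precisely because the topology of $G$ is not assumed to be first countable, so one cannot restrict to sequences. Once this is granted, the argument is nothing more than continuity of the group operation combined with the closedness of $G_+$, and no use is made of commutativity beyond the additive notation.
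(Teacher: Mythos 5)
Your proposal is correct and follows essentially the same route as the paper's proof: decompose points of ${\uparrow}K=K+G_+$, extract a convergent subnet from the compact component, use continuity of subtraction to get convergence of the $G_+$-component, and invoke closedness of $G_+$. The only difference is presentational — you make the identification ${\uparrow}K=K+G_+$ and the need for nets (absence of first countability) explicit, which the paper leaves implicit.
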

\begin{proof}
Suppose that $(x_\alpha)$ is a net in ${\uparrow}K$ such that $x_\alpha\to x$. 
We show that $x\in {\uparrow}K$. 
For each $\alpha$, we have $x_\alpha=y_\alpha + z_\alpha$ where $y_\alpha\in K$ and $z_\alpha\in G_+$. 
Since $K$ is compact there exists a subnet $(y_\beta)$ such that $y_\beta\to y\in K$. 
Then $z_\beta=x_\beta-y_\beta$ converges to $x-y$. 
Since $G_+$ is closed, we have $x-y\in G_+$, and therefore $x\in K+G_+={\uparrow}K$.   
\end{proof}

\section{Auxiliary result for asymptotic concentration of capacities}
Let $(E,\le)$ be a topological preordered space which satisfies Assumption \ref{ass1}. 
In line with Section~\ref{sec:examples}, we consider the concentration $J_A:=\limsup_{n\to\infty}\tfrac{1}{n}\log\mu_n(A)$ with minimal rate function $I_{\rm min}$,
where  $\mu_n(A):=\mathcal{E}_n(1_A)$ for all $A\in  \mathcal{OC}^{\uparrow}$, and  $(\mathcal{E}_n)_{n\in\N}$ is a sequence of sublinear expectations on $\overline{B}$.

\begin{lemma}\label{lem:boundCramer}
Let $K\subset E$ be a compact set. Then, for every $f\in L^\uparrow\cap U^\uparrow$,
\begin{align*}
\limsup_{n\to\infty}\tfrac{1}{n} \log \mathcal{E}_n\big(\exp(nf)1_K\big) &=\phi_J(-\infty1_{K^c}+f1_K)\\
&\le  \sup_{x\in E}\big\{f(x)-I_{\rm min}(x)\big\}.
\end{align*}
\end{lemma}
\begin{proof}
If $J_K=-\infty$, then the first two terms are equal to $-\infty$ as $f$ is bounded on $K$, hence the inequality trivially holds.  
Otherwise, if $J_K>-\infty$, 
we consider the concentration $J^K_A:=J_{A\cap K}-J_K$, $A\in  \mathcal{OC}^{\uparrow}$, and the trivial preorder $\le$\footnote{I.e., $x\le y$ if and only if $x=y$.}. By similar arguments as in Proposition~\ref{eq:seqCap}, for every $f\in \overline{L}^\uparrow\cap \overline{U}^\uparrow$,
\begin{equation}\label{boundCramer2}
	\phi_{J^K}(f)=\phi_J(-\infty1_{K^c}+f1_K)-J_K=\limsup_{n\to\infty}\tfrac{1}{n}\log \mathcal{E}_n(\exp(nf)1_K)-J_K.
\end{equation}
Moreover, since $-\infty1_{K^c}+f1_K\in  U_c^\uparrow$, it follows from Theorem~\ref{thm:mVaradhan1} that
\begin{equation}\label{boundCramer3}
\phi_J(-\infty1_{K^c}+f1_K)\le \sup_{x\in K}\big\{f(x)-I_{\rm min}^0(x)\big\}\le \sup_{x\in E}\big\{f(x)-I_{\rm min}^0(x)\big\},
\end{equation}
where $I^0_{\rm min}$ denotes the minimal rate function for the concentration $J$ w.r.t.~the trivial order.
 By definition of the minimal rate function, it holds $I^0_{\rm min}\ge I_{\rm min}$. Hence, we obtain the claimed assertion as a direct consequence of 
equation~\eqref{boundCramer2} and inequality~\eqref{boundCramer3}.
\end{proof}

\end{appendix}


\end{document}